\documentclass[12pt,twoside]{article}
\usepackage{amsmath,amsthm,amssymb,amsfonts}
\usepackage{graphicx}
\usepackage{hyperref}
\usepackage{enumerate}

 

\theoremstyle{cplain}
\newtheorem{Thrm}{Theorem}[section]
	\newtheorem{Prop}[Thrm]{Proposition}
	\newtheorem{Cor}[Thrm]{Corollary}
	\newtheorem{Lemma}[Thrm]{Lemma}

{
\theoremstyle{cremark}

}
{
\theoremstyle{uremark}

}
{
\theoremstyle{cdefinition}
\newtheorem{Def}[Thrm]{Definition}
}



\newcommand{\ehf}{\text{EHF}}

\newcommand{\val}{\text{VAL}}
\newcommand{\pat}{\text{PAT}}
\newcommand{\bb}{\text{BB}}
\newcommand{\bc}{\text{BC}}

\hyphenation{e-ve-ry}

\hyphenation{ge-ne-ra-li-za-tion}

\hyphenation{a-na-lo-gous}

\hyphenation{pro-per-ty}

\hyphenation{coun-ta-ble}

\graphicspath{{figuras/}}


\begin{document}
\title{Spaces of completions of elementary theories and convergence laws for random hypergraphs}
\author{Nicolau C. Saldanha, M\'arcio Telles}
\maketitle



\abstract{%

Consider the binomial model $G^{d+1}(n,p)$ of the random
$(d+1)$-uniform hypergraph on $n$ vertices, where each edge is present,
independently of one another,
with probability $p:\mathbb{N}\to[0,1]$. We prove that, for all logarithmo-exponential $p\ll n^{-d+\epsilon}$, the probabilities of all
elementary properties of hypergraphs converge, with particular emphasis in the ranges
$p(n)\sim C/n^d$ and $p(n) \sim C\log(n)/n^d$. The exposition is unified by constructing, for each such function $p$,
the topological space of all completions of its almost sure theory. This space turns out to be compact, metrizable and totally disconnected, but further properties depend
 on the range of $p$. The convergence of the probabilities of elementary properties 
is associated with a borelian probability measure on the space.

}

                                          \section{Introduction}

    It has now been more than fifty years since Erd\H os and R\'enyi laid the foundations for the
    study of random graphs on their seminal paper \emph{On the evolution of random graphs}
    \cite{erdos2}, where they considered the binomial random graph model $G(n,p)$. This consists of
     a graph on $n$ vertices
    where each of the potential ${n\choose 2}$ edges is present with probability $p$, all these events
    being independent of each other. Many interesting asymptotic questions arise when $n$ tends to $\infty$
    and we let $p$ depend on $n$. 
    
   Among other results, they showed that many properties of graphs exhibit a \emph{threshold
   behavior}, meaning that the probability that the property holds on $G(n,p)$ turns from near $0$ to near $1$ in a 
   narrow range of the edge probability $p$. More concretely, given a property $P$ of graphs, in
   many cases there is a \emph{threshold} function $p:\mathbb{N}\to[0,1]$ such that, as $n\to\infty$, the probability that $G(n,\tilde{p})$
   satisfies $P$ tends to $0$ for all $\tilde{p}\ll p$ and tends to $1$ for all $\tilde{p}\gg p$. Erd\H os
   and R\'enyi showed, for example, that the threshold for connectivity is $p=\frac{\log n}{n}$.
   They also showed that there is a profound change in the component structure of $G(n,p)$ for 
   $p$ around $\frac{1}{n}$, where one of its many connected components suddenly becomes
   dramatically larger than all others, a phenomenon mainly understood today as a phase transition. The range of $p$ where this occur is called the \emph{Double Jump} and 
   has received enormous attention from researchers since then.
   
   The above threshold behaviors suggest that one could expect to describe some convergence
   results, where the probabilities of all properties in a certain class converge to 
   known values as $n\to\infty$. Among the first convergence results there are the \emph{zero-one laws}, where all properties of graphs expressible by a first order formula (called \emph{elementary properties}) converge to $0$ or 
   to $1$. This happens, for example, if $p$ is independent of $n$. Many other instances of zero-one laws for random graphs were obtained by Joel Spencer in the book \emph{The Strange Logic of Random Graphs} \cite{spencer}. There he shows that zero-one laws hold if $p$ lies between a number of ``critical" functions. More concretely, if $p$ satisfies one of the following conditions

   \begin{enumerate}
   
      \item $p\ll n^{-2}$
      \item $n^{-\frac{1+l}{l}}\ll p\ll n^{-\frac{2+l}{1+l}}$ for some $l\in\mathbb{N}$
      \item $n^{-1-\epsilon}\ll p\ll n^{-1}$ for all $\epsilon>0$
      \item $n^{-1}\ll p\ll (\log n)n^{-1}$
      \item $(\log n)n^{-1}\ll p\ll n^{-1+\epsilon}$ for all $\epsilon>0$
   
    \end{enumerate}
    then a zero-one law holds.
        
    Note that clause (b) is, in fact, a \emph{scheme} of clauses. Note also that (a) can be viewed
    as a special case of (b). There are functions $p\ll n^{-1+\epsilon}$ not considered by 
    any of the above conditions. Such ``gaps" occur an infinite number of times near the critic
    functions in the scheme (b) and two more times: one between clauses (c) and (d) and the 
    other between clauses (d) and (e). Spencer shows that, for some functions $p$ conveniently near that ``critic" functions in (b) and the critic function $\frac{\log n}{n}$, corresponding to the gap (d)-(e), the probabilities 
    of all elementary properties converge to constants $c\in[0,1]$ as
    $n\to\infty$. This situation, more general than that of a zero-one law, is called a \emph{convergence law}. Spencer's book also has a brief discussion of some
    functions on the gap (c)-(d). This gap is traditionally known as \emph{the Double Jump}.
    
    One sees immediately that the possibility that an edge probability function could oscillate infinitely often between two 
    different values can be an obstruction to getting convergence laws. With this difficulty in mind,
    we consider the edge probability functions $p:\mathbb{N}\to[0,1]$ that belong to Hardy's class
    of \emph{logarithmo-exponential functions}. This class is entirely made of eventually monotone 
    functions, avoiding the above mentioned problem, and has the additional convenience of being closed by elementary algebraic operations and compositions that can involve
    logarithms and exponentials. All thresholds of natural properties of graphs seem to belong     
    in Hardy's class.
    
    Generally speaking, our arguments imply that, once one restricts the edge probabilities
    to functions in Hardy's class, there are no further ``gaps": all logarithmo-exponential edge 
    probabilities $p\ll n^{-1+\epsilon}$ are convergence laws. The arguments in Spencer's book
    are sufficient for getting most of these convergence laws, except for those in the window
    $p\sim C\frac{\log n}{n}$ , $C>0$, where just the value $C=1$ is discussed.

    There are three main interests on this work.
    The first one lies in the completion of the discussion of the convergence laws in the window $p\sim C\frac{\log n}{n}$ for other values of $C$. We will see that this window hides an infinite collection of zero-one and convergence laws and that those can be presented in a simple and organized fashion.    
     The second is to present a complete and detailed discussion of the convergence laws in the double jump, much in the 
    spirit of the sketch given in Spencer's book.     
    The third is the language used to present some arguments and results: We were led to associate to every edge probability $p(n)$ a compact topological space, the space of completions, with a unit borelian measure, generalizing the concept of complete set of completions, found in Spencer's book. The structure of this space 
    depends on the range in consideration: it can vary from a simple one-point space to a countably infinite set and, in the most interesting case of the Double Jump, to a 
    Cantor space. 
     Moreover, we present the arguments in the more general framework of random uniform hypergraphs, get simple axiomatizations of the almost sure theories involved and describe all elementary
    types of the countable models of these theories.
    
    Convergence laws have a deep connection with some elementary concepts of logic. More 
    precisely, zero-one laws occur when the class $\Theta_p$ of almost sure elementary properties is 
    \emph{complete}.
    Some everyday results in first-order logic imply that when all countably infinite models of $\Theta_p$   are \emph{elementarily equivalent} (that is, satisfy the same elementary properties), $\Theta_p$ is complete. This is obviously the case if there is, apart from
    isomorphism, only one such model: in this case, we say that $\Theta_p$ is $\aleph_0$-categorical.
    We will face situations where the countable models of
    $\Theta_p$ are, indeed, unique up to isomorphism. In other cases, the almost sure theory is still complete but the countable models are not unique: in the
    instances of the latter situation, the countable models are not far from being uniquely determined and, in particular, lend themselves to an exhaustive characterization. 
    
The more general context of convergence laws requires examination of the space of all possible completions of the almost sure theory $\Theta_p$.
  Here, $\Theta_p$ is not necessarily complete but is not very far from that and we still manage to 
    classify their countable models. At this stage, it is convenient to put a topology on the space $\mathcal{K}(\Theta_p)$ of all possible completions of $\Theta_p$. This process is presented in section \ref{completions}, which is mostly
   independent.

Let $J$ be the class of all $L$-functions $p$ such that, for all $\epsilon>0$, $p\ll n^{-d+\epsilon}$. 
We prove the following theorem.

 \begin{Thrm}   \label{grandeteorema}
 
 Every $p\in J$ is a convergence law.  More precisely:

\begin{enumerate}[(i)]

\item{

 If the $L$-function $p$ satisfies one of the following conditions
 
 \begin{enumerate}

   \item $0\le p\le n^{-(d+1)}$
   \item $n^{-\frac{1+ld}{l}}\ll p\ll n^{-\frac{1+(l+1)d}{l+1}}$, for some $l\in\mathbb{N}$
   \item $n^{-(d+\epsilon)}\ll p\ll n^{-d}$ for all $\epsilon>0$
   \item $n^{-d}\ll p\ll(\log n)n^{-d}$
   \item $p\sim C\cdot\frac{\log n}{n^d}$, where $\frac{d!}{v^*+1}<C<\frac{d!}{v^*}$ for  some $d,v^*\in\mathbb{N}$.  
                         \item $p\sim \frac{d!}{v^*}\cdot\frac{\log n}{n^d}$ where $\omega\to\pm\infty$ or
            $\omega\to C$ where $l-1<C<l$, for some $l\in\mathbb{N}$
    \item $p\sim\frac{d!}{v^*}\cdot\frac{\log n+l\log\log n}{n^d}$, where $c\to\pm\infty$           
    \item $(\log n)n^{-d}\ll p\ll n^{-d+\epsilon}$                               

\end{enumerate} 
 then $p$ is a zero-one law and, in particular, the corresponding $\mathcal{K}(\Theta_p)$ is a one-point space.
 
}

\item{
 
 If the $L$-function $p$ satisfies one of the following conditions
 
 \begin{enumerate}
 
   \item $p\sim c\cdot n^{-\frac{1+ld}{l}}$, for some constant $c\in(0,+\infty)$
   \item $p\sim\frac{d!}{v^*}\cdot\frac{\log n+l\log\log n+c(n)}{n^d}$, where $c\in\mathbb{R}$

 \end{enumerate}
then $p$ is a convergence law and the corresponding $\mathcal{K}(\Theta_p)$ is infinite countable.

}

\item{

If 
   $p\sim\frac{\lambda}{n^d}\text{, for some $\lambda\in\mathbb{R}$}$
then $p$ is a convergence law and the corresponding $\mathcal{K}(\Theta_p)$ is a Cantor space.

}

\end{enumerate}

 \end{Thrm}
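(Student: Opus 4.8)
The plan is to prove this theorem by analyzing three qualitatively different regimes for the edge probability function $p(n)$, corresponding to the three parts of the statement. The unifying strategy is to study the almost sure theory $\Theta_p$ and its space of completions $\mathcal{K}(\Theta_p)$ via an Ehrenfeucht–Fraïssé game analysis combined with a careful count of the "bounded components" and "pendant structures" that appear in the random hypergraph at the relevant scale. Let me sketch the approach for each regime.

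For **part (i)** (zero-one laws), the plan is to show that $\Theta_p$ is complete by establishing that any two countable models are elementarily equivalent, which one verifies through a winning strategy for Duplicator in the $k$-round EF game for every $k$. In ranges (a)–(d) and (h) the structure is governed by the appearance/non-appearance of small subhypergraphs, and one shows that below each threshold the relevant substructures almost surely do not appear (or appear in unbounded numbers with the "right" local statistics), so that Duplicator can always respond. The genuinely new cases are the logarithmic windows (e)–(g): here the key is to identify the critical vertex count $v^*$ dictated by the constant $C$ relative to $\frac{d!}{v^*}$, and to show via a first/second-moment computation that isolated-type configurations on $\le v^*$ vertices almost surely vanish while those on more vertices persist — pinning down a unique complete theory.

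For **part (ii)** (countable $\mathcal{K}(\Theta_p)$), the plan is to locate the precise threshold functions (the points $c\cdot n^{-(1+ld)/l}$ and the refined logarithmic thresholds with the $\log\log n$ and additive constant terms) at which a Poisson-distributed count of some finite substructure becomes relevant. Here $\Theta_p$ is no longer complete; rather, its completions are indexed by the possible (finite or cofinite) values of finitely many "counting" statements, each asymptotically Poisson. The plan is to show that each completion is determined by specifying, for each such substructure, whether its count equals $0,1,2,\dots$ or is "large," yielding a countably infinite discrete-plus-limit-point space — and then to verify that the topology on $\mathcal{K}(\Theta_p)$ from Section \ref{completions} is exactly that of a convergent sequence with its limit.

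For **part (iii)** (Cantor space, the Double Jump $p\sim\lambda/n^d$), the plan is to show that there are now infinitely many independent finite substructures (trees/components of each size) whose counts are jointly asymptotically independent Poisson, so that the completions of $\Theta_p$ are indexed by an element of a product $\prod_i(\mathbb{N}\cup\{\infty\})$ — a space which, after identifying which configurations of counts are consistent and giving it the completion topology, is compact, metrizable, perfect, and totally disconnected, hence a Cantor space. The convergence law then follows because each elementary property is decided by finitely many such counts, whose limiting probabilities are explicit products of Poisson probabilities, and these assemble into a Borel measure on $\mathcal{K}(\Theta_p)$.

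\textbf{The main obstacle} I anticipate is the logarithmic window in part (i)(e)–(g) and the refined thresholds in (ii)(b): pinning down the exact constant $C=\frac{d!}{v^*}$ and the secondary $\log\log n$ and additive corrections requires a delicate moment analysis showing that the expected number of "dangerous" isolated configurations on exactly $v^*$ vertices passes through the $\Theta(1)$ regime precisely at these thresholds, with Poisson convergence governing the transition. Getting the hypergraph generalization of Spencer's counting right — with the correct exponents involving $d$ and the combinatorial factor $d!/v^*$ — and verifying that Duplicator's EF strategy survives in the presence of these finitely many persistent local configurations, is where the real work lies.
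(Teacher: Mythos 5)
Your overall architecture --- completeness of $\Theta_p$ via Ehrenfeucht--Fra\"iss\'e games in the zero-one ranges, first and second moment analyses at the thresholds, asymptotically independent Poisson counts assembled into a weighted spanning tree on $\mathcal{K}(\Theta_p)$ --- is exactly the paper's. However, two of your identifications of the governing statistics do not match what the argument actually needs, and as written they would derail the execution.

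First, in the window $p\sim C\cdot\frac{\log n}{n^d}$ the paper's combinatorial object is the \emph{minimal $v^*$-marked Berge-tree}: a partially induced copy of a Berge-tree of order $l$ in which $v^*$ distinguished vertices are required to have no neighbors outside the copy. Its expected count is of order $n^{1-Cv^*/d!+o(1)}$ (up to logarithmic factors), so copies with \emph{more} marked vertices die first: for $\frac{d!}{v^*+1}<C<\frac{d!}{v^*}$ the $(v^*+1)$-marked trees vanish while every minimal $v^*$-marked tree occurs in unbounded numbers. Your sketch has this monotonicity reversed (you say configurations on $\le v^*$ vertices vanish while larger ones persist), and it does not isolate the marked, partially-induced notion, which is precisely what makes the second-moment intersection analysis work (minimality forces coinciding marked sets to force coinciding copies) and what rigidifies the countable models (every non-marked vertex has infinitely many neighbors, so components containing marked trees are determined up to isomorphism).

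Second, and more seriously for part (iii): at $p\sim\lambda/n^d$ every finite Berge-tree already occurs as a component infinitely often a.a.s., so the counts of tree components are degenerate and carry no information about the completion. The Cantor-space structure of $\mathcal{K}(\Theta_p)$ comes entirely from the \emph{unicyclic} components, classified by their local $(r,s)$-values and patterns via a Poisson branching-process approximation of vertex neighborhoods; it is these infinitely many cycle-type counts that are asymptotically independent Poisson, and because every node of the resulting spanning tree splits nontrivially, no branch is isolated and the space is perfect. If you index completions by counts of tree components of each size, as your sketch suggests, the spanning tree collapses to a point and the Cantor set never appears.
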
   
    
We conjecture that $J$ is a maximal interval for the property of being entirely
       made of convergence laws.
    Indeed, Spencer, in \cite{spencer}, shows that if $\alpha=1/3$ then $n^{-\alpha}$ fails to be a convergence law for the random graph $G(n,p)=G^{2}(n.p)$. His methods
       seem to apply to other rational values of $\alpha\in(0,1)$ and $d\in\mathbb{N}$.  
                 
 This is an extended version of the second author's thesis \cite{tese} presented to PUC-Rio, in September 2013,  incorporating some suggestions from the examining commission. 
Among such extensions of the original text,
there is a discussion of convergence laws for the random hypergraph in the range
    $p\sim\frac{\lambda}{n^d},$
called the \emph{Double Jump}.

The first author is partially supported by grants from CAPES, CNPq and FAPERJ (Brazil). The second author is partially supported by UERJ (Brazil). Both 
authors would like acknowledge the valuable
remarks by J. Freire (PUC-Rio), P. Gon\c calves (PUC-Rio), Y. Kohayakawa (USP), R. Morris (IMPA), R. I. Oliveira (IMPA), J. Spencer (NYU) and C. Tomei (PUC-Rio).

                                                \section{Preliminaires}

          \subsection{The Model $G^{d+1}(n,p)$}

Let $p:\mathbb{N}\to[0,1]$.
We consider the binomial model $G^{d+1}(n,p)=G^{d+1}(n,p(n))$ of random $(d+1)$-uniform  hypergraphs on $n$ vertices with probability $p(n)$. 
For fixed $n\in\mathbb{N}$, it is a finite probability space consisting of all hypergraphs on $n$ labelled vertices, where each edge is a set of cardinality $d+1$; 
for $H$ such a hypergraph with $k$ edges, we have                                                               
$$\mathbb{P}_n[H]=p^k(1-p)^{{n\choose d+1}-k}.$$
 Another characterization of this probability space is to insist that each of the potential ${n\choose d+1}$ 
 edges be present in $G^{d+1}(n,p)$ with probability $p$, each of these events being independent of each other.
We will, more often, prefer the latter because it is more convenient in applications.
Notice that, in the display above, we write $\mathbb{P}_n[H]$ (instead of $\mathbb{P}[H]$) to stress the dependence on $n$: this will often be done.

Our interest lies on the asymptotic behavior of $G^{d+1}(n,p)$ when $n\to\infty$. 
More formally, a \emph{property} of $(d+1)$-uniform hypergraphs is a class  $P\subseteq G^{d+1}$ of
such hypergraphs closed by isomorphism; here $G^{d+1}$ denotes the class of all $(d+1)$-uniform hypergraphs. 
In spite of this formalization, we use logical symbols such as $\land$ and $\lor$ instead of the set theoretical counterparts $\cap$ and $\cup$.
Thus, for instance, we write $\lnot P$ for $G^{d+1}\smallsetminus P$. We also prefer $H\models P$ to $H\in P$.

Each property $P$ gives rise to a sequence $\mathbb{P}_n[P]$ with
$\mathbb{P}_n[P]=\mathbb{P}[G^{d+1}(n,p)\models P]$;
 it is the asymptotic behavior  of this sequence we shall be interested in.
A property $P\subseteq G^{d+1}$ is said to hold \emph{asymptoticaly almost surely} (or simply \emph{almost surely}) if $\mathbb{P}_n[P]\to 1$. In this case we say simply that \emph{$P$ holds a.a.s.}. A property $P$ is said to hold \emph{almost never} if its negation $\lnot P$ holds almost surely. Very often, it is the case that a property $P$
turns from holding almost never to holding almost surely in a narrow range of the edge probability $p$.

\begin{Def}

We say a function $\tilde{p}:\mathbb{N}\to[0,1]$ is an \emph{increasing local threshold} for the property $P$ if there are functions $p_1,p_2$ such that
 $p_1\ll\tilde{p}\ll p_2$ and, for all $p:\mathbb{N}\to[0,1]$ satisfying $p_1\ll p\ll p_2$, the following conditions hold:

 \begin{enumerate}

   \item   If $p\gg\tilde{p}$ then $P$ holds a.a.s. in $G^{d+1}(n,p)$.

  \item If $p\ll\tilde{p}$ then $\lnot P$ holds a.a.s. in $G^{d+1}(n,p)$.

         \end{enumerate}
The function $\tilde{p}$ is a \emph{decreasing local threshold} for $P$ if it is an increasing local threshold for $\lnot P$. 
Finally, $\tilde{p}$ is  a \emph{local threshold} for $P$ if it is either an increasing or a decreasing local threshold.

\end{Def} 

Above and in all that follows, for eventually positive functions $f,g$, both  expressions $f\ll g$ and $f=o(g)$ mean $\lim\frac{f(n)}{g(n)}=0$.
Also, $f\sim g$ means $\lim\frac{f(n)}{g(n)}=1$.

Notice that if $P$ is monotone increasing and has a threshold $\tilde{p}$ in the usual sense \cite{erdos}, then $\tilde{p}$ is an increasing local threshold for $P$.
Furthermore, $\tilde{p}$ is the only local threshold for $P$. On the other hand, we shall see examples of non-monotone properties with more than one local threshold.

As far as thresholds are concerned, the following proposition is a generalization of a classical result of Erd\H os, R\'enyi and Bollob\'as, stated and proved by Vantsyan in \cite{vants}.

\begin{Thrm}       \label{vantsyan}

Fix a finite $(d+1)$-uniform hypergraph $H$ and let 

       $$\rho:=\max\left\{\frac{|E(\tilde{H})|}{|V(\tilde{H})|}\mid\tilde{H}\subseteq H, E(\tilde{H})>0\right\}.$$

Then the function $p(n)=n^{-\frac{1}{\rho}}$ is a threshold for the property of containment of $H$ as a sub-hypergraph.

\end{Thrm}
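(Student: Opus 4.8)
The plan is to prove both directions by the standard first- and second-moment method, using the key combinatorial role of the ratio $|E|/|V|$ over subhypergraphs. Throughout, let $v = |V(H)|$ and $e = |E(H)|$, and let $X = X_H$ count the injective, vertex-labelled copies of $H$ inside $G^{d+1}(n,p)$. Since each embedding of $H$ uses a fixed set of $e$ distinct $(d+1)$-edges, each present independently with probability $p$, one computes $\mathbb{E}[X] = (n)_v\, p^e \sim n^v p^e$, where $(n)_v = n(n-1)\cdots(n-v+1)$.

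For the $0$-statement, suppose $p \ll n^{-1/\rho}$. Let $\tilde H \subseteq H$ be a subhypergraph attaining the maximum in the definition of $\rho$, so that $|E(\tilde H)|/|V(\tilde H)| = \rho$; write $\tilde v = |V(\tilde H)|$ and $\tilde e = |E(\tilde H)|$, so $\tilde e = \rho\,\tilde v$. The first-moment bound gives $\mathbb{P}[\,\tilde H \subseteq G^{d+1}(n,p)\,] \le \mathbb{E}[X_{\tilde H}] \sim n^{\tilde v} p^{\tilde e} = (n\,p^{\rho})^{\tilde v}$. Because $p \ll n^{-1/\rho}$ is equivalent to $n p^\rho \to 0$, this expectation tends to $0$. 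Since every copy of $H$ contains a copy of $\tilde H$, we conclude $\mathbb{P}[\,H \subseteq G^{d+1}(n,p)\,] \le \mathbb{P}[\,\tilde H \subseteq G^{d+1}(n,p)\,] \to 0$, so $H$ is contained almost never.

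For the $1$-statement, suppose $p \gg n^{-1/\rho}$, i.e. $n p^\rho \to \infty$. I apply Chebyshev's inequality in the form $\mathbb{P}[X = 0] \le \mathrm{Var}(X)/\mathbb{E}[X]^2$. Writing $X = \sum_\alpha I_\alpha$ over potential embeddings $\alpha$, the covariance of $I_\alpha$ and $I_\beta$ vanishes unless $\alpha$ and $\beta$ share at least one common edge; grouping the remaining pairs according to the isomorphism type $J \subseteq H$ of their overlap (with $|E(J)| \ge 1$), a routine count of embeddings yields $\mathrm{Var}(X)/\mathbb{E}[X]^2 \le \sum_{J} O\!\left(n^{-|V(J)|}\, p^{-|E(J)|}\right)$, a finite sum. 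Each term equals $O\big((n\,p^{|E(J)|/|V(J)|})^{-|V(J)|}\big)$, and since $|E(J)|/|V(J)| \le \rho$ together with $0 < p < 1$ forces $p^{|E(J)|/|V(J)|} \ge p^\rho$, we get $n\,p^{|E(J)|/|V(J)|} \ge n p^\rho \to \infty$. Hence every term, and the whole finite sum, tends to $0$; likewise $1/\mathbb{E}[X] = O\big((np^\rho)^{-v}\big) \to 0$. Therefore $\mathbb{P}[X = 0] \to 0$ and $H$ is contained almost surely.

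The conceptual heart of the argument is the elementary inequality $|E(J)|/|V(J)| \le \rho$ for all subhypergraphs $J$, which feeds into both halves: in the $0$-statement it is applied to the single densest subhypergraph, and in the $1$-statement to every overlap type simultaneously. The main technical obstacle is the bookkeeping in the variance estimate --- correctly enumerating pairs of overlapping embeddings by their shared subhypergraph and checking that edge-disjoint pairs contribute nothing --- but this is entirely analogous to the graph case ($d = 1$) and transfers to $(d+1)$-uniform hypergraphs without new ideas, since the covariance computation uses only the mutual independence of distinct potential edges.
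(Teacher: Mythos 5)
Your proof is correct. The paper itself does not prove this statement --- it is quoted as a known result of Vantsyan \cite{vants}, generalizing the classical Erd\H os--R\'enyi--Bollob\'as small-subgraph threshold --- so there is no in-paper argument to compare against; your write-up supplies exactly the standard proof that the cited source uses. Both halves check out: the $0$-statement correctly applies the first moment method to a densest subhypergraph $\tilde H$ realizing $\rho$, using $n^{\tilde v}p^{\tilde e}=(np^{\rho})^{\tilde v}\to 0$ and the fact that a copy of $H$ forces a copy of $\tilde H$; the $1$-statement correctly reduces Chebyshev to the finite sum $\sum_J O\bigl(n^{-|V(J)|}p^{-|E(J)|}\bigr)$ over overlap types $J$ with at least one edge (edge-disjoint pairs of copies are genuinely independent here, since distinct potential $(d+1)$-edges are independent), and the key inequality $|E(J)|/|V(J)|\le\rho$ makes every term $O\bigl((np^{\rho})^{-|V(J)|}\bigr)\to 0$. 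As you note, nothing in this computation is sensitive to the uniformity $d+1$, which is why the graph argument transfers verbatim. The only cosmetic remark is that one should say explicitly that containment of $H$ is a monotone increasing property, so that establishing the two limits for $p\ll n^{-1/\rho}$ and $p\gg n^{-1/\rho}$ is precisely what ``threshold'' means in the sense the paper uses (cf.\ its remark following the definition of local thresholds).
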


That is to say, $n^{-\frac{1}{\rho}}$ is a threshold for the appearance of small sub-hypergraphs with maximal \emph{density} $\rho$. We will need this later.

 \subsection{Logarithmico-Exponential Functions}


An obstruction to getting interesting results concerning the asymptotic behavior of the sequence $\mathbb{P}_n[P]$, specially regarding its convergence,
is the possibility that the function $p$ could oscillate infinitely often between two different values, so that the corresponding probabilities also do. This would rule out convergence for trivial
reasons and calls for a restriction on the class of the $p:\mathbb{N}\to[0,1]$ considered.

One possibility is to take only the $p$'s in a class entirely made of eventually monotone functions. A natural choice is Hardy's class of \emph{logarithmico-exponential functions},
or \emph{$L$-functions} for short, as presented in \cite{hardy}. In a nutshell, this consists of
 the eventually defined real-valued functions defined by a finite combination of the ordinary algebraic symbols and the functional symbols $\log(\ldots)$ and $\exp(\ldots)$.
on the variable $n$ (we require that, in all ``stages of construction", the functions take only real values).
By induction on the complexity of $L$-functions, one can easily show that this class meets our requirement and even more. 
\begin{Thrm}

Any L-function is eventually continuous, of constant
sign, and monotonic, and, as $n\to +\infty$, either converges to a real number or tends to $\pm\infty$. In particular, if $f$ and $g$ are eventually positive $L$-functions, exactly one of the following relations holds.

\begin{enumerate}

\item $f\ll g$
\item$f\gg g$
\item $f\sim c\cdot g$, for some constant $c\in\mathbb{R}.$

\end{enumerate}
 
\end{Thrm}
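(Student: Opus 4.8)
The plan is to prove the whole statement by structural induction on the way an $L$-function is assembled from the variable $n$ and constants using the operations $+,-,\times,\div,\log,\exp$, and the first move is to observe that everything reduces to a single claim: \emph{every $L$-function is eventually of constant sign} (eventually positive, eventually negative, or eventually zero). Eventual continuity is immediate, since each operation preserves continuity wherever the intermediate functions are defined and the phrase ``eventually defined'' absorbs the finitely many points of trouble. Granting the constant-sign claim for all $L$-functions, I would then apply it to the derivative $f'$: the class is closed under differentiation, because $(\log f)'=f'/f$, $(\exp f)'=f'\exp f$, and the product and quotient rules keep us inside it, so $f'$ is eventually of constant sign, whence $f$ is eventually monotonic; an eventually monotone continuous function either converges to a real number or tends to $\pm\infty$. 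The ``in particular'' clause then follows at once: for eventually positive $f,g$ the quotient $f/g$ is again an eventually positive $L$-function, so by the first part it has a limit $c\in[0,+\infty]$, and the three mutually exclusive cases $c=0$, $0<c<+\infty$, and $c=+\infty$ are precisely $f\ll g$, $f\sim c\cdot g$, and $f\gg g$; uniqueness of the limit gives the ``exactly one''.

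It remains to prove the constant-sign claim by induction, and I would carry out the induction on a complexity measure given by the nesting depth of the $\exp$ and $\log$ symbols, refined by the number of top-level operation symbols. The base case consists of rational functions of $n$, whose sign for large $n$ is read off from the leading term. The operation $\exp f$ is always positive; the sign of $fg$ and $f/g$ is the product or quotient of the signs supplied by the inductive hypothesis; and $\log f$ is handled by noting that $f-1$ is an $L$-function whose constant sign decides whether $\log f$ is eventually positive, negative, or zero. The genuinely hard case, and the main obstacle, is the sum $f\pm g$: when $f$ and $g$ have opposite signs and both tend to $\pm\infty$, their individual signs convey nothing about the sign of the sum.

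To resolve this case I would isolate a \emph{comparison lemma}: for eventually positive $L$-functions $f,g$, the ratio $f/g$ tends to a limit in $[0,+\infty]$. Granting it, the sign of $f-g$ is read off from whether that limit exceeds, equals, or falls below $1$, the borderline value $1$ being pushed down to a difference the induction has already settled at a strictly lower level. The comparison lemma is itself woven into the same induction, the reduction of a ratio's limit to the sign of $\log f-\log g$ and to l'Hôpital-type quotients $f'/g'$ of derivatives being the device that lowers complexity. The heart of the matter, and the step I expect to be by far the most delicate, is to choose the complexity measure so that every such reduction is strictly decreasing, making the simultaneous induction on sign and on comparison well-founded; once that bookkeeping is airtight, so that cancellations in sums are always eventually resolved at a strictly simpler level, the surviving cases reduce to routine sign arithmetic.
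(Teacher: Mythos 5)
The paper does not actually prove this theorem: it asserts it with the remark that ``by induction on the complexity of $L$-functions, one can easily show'' it, and the real reference is Hardy's \emph{Orders of Infinity} \cite{hardy}. So you are being compared against a citation, not a written argument. Your outer reductions are the standard ones and are fine: continuity is absorbed by ``eventually defined''; monotonicity follows from the constant-sign claim applied to $f'$ once one checks closure of the class under differentiation; the limit exists by monotone convergence in $[-\infty,+\infty]$; and the trichotomy for $f\ll g$, $f\gg g$, $f\sim c\cdot g$ follows by applying the first part to $f/g$. All of that is correct and is exactly how the classical proof is organized.

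The genuine gap is that you have deferred, rather than proved, the only hard statement: that the simultaneous induction (constant sign of differences, plus the comparison lemma for ratios) is well-founded. You say the delicate point is ``to choose the complexity measure so that every such reduction is strictly decreasing,'' and you do not exhibit such a measure --- and the one you name, nesting depth of $\exp$ and $\log$ refined by operation count, does not work for the reductions you propose. Differentiation does not lower it: $(\exp f)'=f'\exp f$ has the same exponential nesting as $\exp f$, so the l'H\^opital step $f/g\rightsquigarrow f'/g'$ is not progress in that measure and could a priori be iterated forever. Likewise, in the borderline case $f/g\to 1$ the sign of $f-g$ is \emph{not} determined by the limit of the ratio, and the object you must then examine ($f-g$ or $f/g-1$) is not at a strictly lower level of the stated measure --- this is precisely where cancellation bites. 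Hardy's actual argument circumvents this by first putting $L$-functions into a normal form organized by orders of infinity (a scale of iterated logarithms and exponentials) and inducting on that scale, which is substantially more bookkeeping than ``routine sign arithmetic.'' As written, your proposal is a correct plan with its central lemma unproved; either carry out Hardy's normal-form induction or, as the paper implicitly does, cite \cite{hardy} for the result.
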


            \subsection{First Order Logic of Hypergraphs}

Having narrowed the class of possible edge probability functions, we now turn to a similar procedure on the class of properties of hypergraphs.

The \emph{first order logic of $(d+1)$-uniform hypergraphs} $\mathcal{FO}$ is the relational logic with language $\{\sigma\}$, where $\sigma$ is a $(d+1)$-ary predicate.
The semantics of $\mathcal{FO}$ is given by quantification over vertices and giving the formula $\sigma(x_0,x_1,\ldots,x_d)$ the interpretation ``$\{x_0,x_1,\ldots,x_d\}$ is an edge". Thus, given a hypergraph $H$ and $\phi\in\mathcal{FO}$, we write $H\models\phi$ if $\phi$ holds in $H$.

Each $\phi\in\mathcal{FO}$ \emph{represents} the property $P=\{H\in G^{d+1} \mid H\models\phi\}$ so that $H\models P$ if and only if $H\models\phi$.
We say a property $P$ of $(d+1)$-uniform hypergraphs is \emph{elementary} if it can be represented by a formula in $\mathcal{FO}$. In this case, we abuse notation by writing
$P\in\mathcal{FO}$ and, when no possibility of confusion arises, make no distinction between $P$ and the first order formula defining it.

A \emph{first order (or elementary) theory} is simply a subclass $\Theta\subseteq\mathcal{FO}$, or, still abusing notation, a class of elementary properties. 
We write $H\models\Theta$ if $H\models\phi$ for all $\phi\in\Theta$.
We say a property $P$ is \emph{axiomatizable} if there is a first order 
  theory $\Theta$ such that, for all hypergraphs $H$, one has $H\models P$ if and only if $H\models\Theta$. If there is such a finite $\Theta=\{\phi_1,\ldots,\phi_k\}$, then
  $P$ is equivalent to $\phi\land\ldots\land\phi_k$, and is, therefore, elementary. 
  
  It is easy to see that the property $P_k$ of having at least $k$ vertices is elementary. If $\phi_k$ is a elementary
  sentence meaning $P_k$, then the property $\textbf{INF}$ of being infinite is axiomatizable by the theory 
  $\{\phi_k\}_{k\geq 1}$. The next proposition can be used to show that the property $\textbf{FIN}$ of being finite is not axiomatizable and that $\textbf{INF}$ is not elementary.

Fix a first order theory $\Theta\subseteq\mathcal{FO}$ and a property $P\in\mathcal{FO}$. We say \emph{$P$ is a semantic consequence of $\Theta$},
 and write $\Theta\models P$, if $P$ is satisfied in all hypergraphs that satisfy all
of $\Theta$, that is to say, if $H\models\Theta$ implies $H\models P$. One can define a deductive system in which all derivations are finite sequences of formulae in $\mathcal{FO}$, giving rise to the concept of 
$P$ being a \emph{syntatic consequence of $\Theta$}, meaning that some $\mathcal{FO}$-formula defining $P$ is the last term of a derivation that only uses formulae in $\Theta$ as axioms. 

We shall need a few basic results in first-order logic.
One piece of information in G\"odel's Completeness Theorem is the fact that one can pick such a deductive system in a suitable fashion so as to make the concepts of semantic and syntatic consequences identical. As derivations are finite sequences of formulae, the following \emph{Compactness Result} follows.

\begin{Prop}

If $P$ is a semantic consequence of $\Theta$ then it is a semantic consequence of a finite subclass of $\Theta$.
In particular, if every finite subclass of $\Theta$ is consistent then $\Theta$ is consistent.

\end{Prop}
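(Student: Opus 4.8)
The plan is to deduce this directly from G\"odel's Completeness Theorem, invoked just above, which lets us pass freely between the semantic relation $\Theta\models P$ and the syntactic one $\Theta\vdash P$ (that is, $P$ being a syntactic consequence of $\Theta$). The whole argument rests on a single structural feature of the syntactic side: a derivation is by definition a \emph{finite} sequence of formulae, so it can cite only finitely many members of $\Theta$ as axioms.

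First I would treat the main assertion. Suppose $\Theta\models P$. By the Completeness Theorem this yields $\Theta\vdash P$, so I fix a derivation $D$ of some $\mathcal{FO}$-formula defining $P$ that uses only formulae of $\Theta$ as axioms. Since $D$ is finite, only finitely many $\phi_1,\dots,\phi_k\in\Theta$ actually occur in it as axioms; set $\Theta_0:=\{\phi_1,\dots,\phi_k\}$. The same $D$ witnesses $\Theta_0\vdash P$, and applying the Completeness Theorem in the reverse (soundness) direction gives $\Theta_0\models P$. Thus $P$ is a semantic consequence of the finite subclass $\Theta_0\subseteq\Theta$, as required.

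For the second assertion I would argue by contraposition: assuming $\Theta$ is inconsistent, I exhibit a finite inconsistent subclass. Inconsistency means $\Theta$ has no model, so every formula is vacuously a semantic consequence of $\Theta$; in particular, fixing any sentence $P$, both $\Theta\models P$ and $\Theta\models\lnot P$ hold. Applying the first part twice produces finite subclasses $\Theta_1,\Theta_2\subseteq\Theta$ with $\Theta_1\models P$ and $\Theta_2\models\lnot P$. Their union $\Theta_0:=\Theta_1\cup\Theta_2$ is finite, and any model of $\Theta_0$ would have to satisfy both $P$ and $\lnot P$; hence $\Theta_0$ has no model and is inconsistent. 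Contrapositively, if every finite subclass of $\Theta$ is consistent, then $\Theta$ is consistent.

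There is no genuine obstacle here once the Completeness Theorem is granted: the entire content is carried by the equivalence of semantic and syntactic consequence together with the finiteness of derivations. The only point deserving care is the elementary but essential observation that a finite proof can draw on only finitely many hypotheses, which is precisely what converts completeness into compactness.
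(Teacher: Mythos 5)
Your argument is correct and follows essentially the same route as the paper: G\"odel's Completeness Theorem converts semantic consequence into syntactic consequence, and the finiteness of derivations then yields a finite subclass of $\Theta$ sufficing as axioms. The only cosmetic difference is in the second assertion, where the paper substitutes a single contradictory sentence for $P$ and applies the first part once, whereas you apply it twice (to $P$ and $\lnot P$) and take a union of the two finite subclasses; both variants are immediate.
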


The last part comes from substitution of $P$ by any contradictory property. A careful analysis of the argument on the proof of G\"odel's Completeness Theorem shows the \emph{Downward L\"owenhein-Skolem Theorem},
that if $\Theta$ is a consistent theory (that is, satisfied by some hypergraph) then there is a hypergraph on a finite or countable number of vertices satisfying $\Theta$. 

It is convenient to
suppose that first order theories are closed under semantic implication and we will always assume 
that. So, if $\tilde{\Theta}$ is any elementary theory and $\tilde{\Theta}\models\phi$, then $\phi\in\tilde{\Theta}$.
From now on, we will call a first order theory simply a theory.

        \subsection{Zero-One Laws and Complete Theories}

The above observations will be useful in obtaining the following convergence results involving all properties in $\mathcal{FO}$.

\begin{Def}

We say a function $p: \mathbb{N}\to [0,1]$ is a \emph{zero-one law} if, for all $P\in\mathcal{FO}$, one has

      $$\lim_{n\to\infty}\mathbb{P}_n[P]\in\{0,1\}.$$

\end{Def}

Above we mean that for every $P\in\mathcal{FO}$ the limit exists and is either zero or one. 

There is a close connection between zero-one laws and the concept of completeness. We say a theory $\Theta$ is \emph{complete} if, for
every $P\in\mathcal{FO}$, exactly one of $\Theta\models P$ or $\Theta\models\lnot P$ holds. 

Given $p:\mathbb{N}\to[0,1]$, the \emph{almost sure theory} of $p$ is defined by

         $$\Theta_p:=\{P\in\mathcal{FO} \mid \mathbb{P}_n[P]\to 1\}.$$
So $\Theta_p$ is the class of elementary properties of $G^{d+1}(n,p)$ that hold almost surely. 
Clearly, if $\phi_1,\ldots,\phi_k\in\Theta_p$ and $\{\phi_1,\ldots,\phi_k\}\models\phi$, then $\phi\in\Theta_p$.
By compactness, $\Theta_p$ is closed under semantic consequence: $\Theta_p\models\phi$ if and only if $\phi\in\Theta_p$.
In particular, since a contradiction never holds, $\Theta_p$ is consistent. Moreover as, for every $m\in\mathbb{N}$, the property of having at least 
$m$ vertices is elementary and holds almost surely, $\Theta_p$ has no finite models.

The connection between completeness and zero-one laws is given by the next theorem.

\begin{Thrm}

The function $p$ is a zero-one law if, and only if, $\Theta_p$ is complete.

\end{Thrm}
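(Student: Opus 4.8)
The plan is to prove the biconditional by establishing each direction separately, and the core tool throughout will be the relationship between the almost sure theory $\Theta_p$ and the limiting probabilities $\mathbb{P}_n[P]$.

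First I would prove the easier direction: if $\Theta_p$ is complete, then $p$ is a zero-one law. Fix any $P\in\mathcal{FO}$. By completeness, exactly one of $\Theta_p\models P$ or $\Theta_p\models\lnot P$ holds. Since $\Theta_p$ is closed under semantic consequence (as noted in the excerpt, by compactness), this means exactly one of $P\in\Theta_p$ or $\lnot P\in\Theta_p$ holds. By the very definition of $\Theta_p$, the first case gives $\mathbb{P}_n[P]\to 1$, and the second gives $\mathbb{P}_n[\lnot P]\to 1$, i.e. $\mathbb{P}_n[P]\to 0$. Either way the limit exists and lies in $\{0,1\}$, so $p$ is a zero-one law.

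For the converse, suppose $p$ is a zero-one law and fix $P\in\mathcal{FO}$; I must show exactly one of $\Theta_p\models P$, $\Theta_p\models\lnot P$ holds. By hypothesis $\lim_n\mathbb{P}_n[P]$ exists and equals $0$ or $1$. If the limit is $1$, then $P\in\Theta_p$ by definition, so $\Theta_p\models P$. If the limit is $0$, then $\mathbb{P}_n[\lnot P]\to 1$, so $\lnot P\in\Theta_p$ and $\Theta_p\models\lnot P$. This establishes that at least one of the two holds. To see that \emph{at most} one holds — i.e. they are not both semantic consequences — I would invoke consistency: the excerpt already records that $\Theta_p$ is consistent (a contradiction never holds almost surely, and indeed $\Theta_p$ has infinite models by the Downward L\"owenheim--Skolem Theorem). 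If both $\Theta_p\models P$ and $\Theta_p\models\lnot P$ held, then any model $H\models\Theta_p$ would satisfy both $P$ and $\lnot P$, which is impossible; since consistent theories have models, this cannot occur.

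I do not expect any genuine obstacle here, as the statement is essentially an unwinding of definitions once the preparatory facts about $\Theta_p$ are in hand. The only point requiring mild care is the exclusivity (``exactly one'') in the definition of completeness, which hinges on the consistency of $\Theta_p$; the substantive content — that $\Theta_p$ is consistent and closed under semantic consequence — has already been secured in the excerpt preceding the theorem, so the proof reduces to assembling these observations in the two directions above.
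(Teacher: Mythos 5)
Your proof is correct and follows essentially the same route as the paper: the converse direction is identical, and in the forward direction you simply cite the previously recorded fact that $\Theta_p$ is closed under semantic consequence (itself proved by compactness), whereas the paper re-runs that compactness argument inline to extract a finite $\{P_1,\ldots,P_k\}\subseteq\Theta_p$ with $\{P_1,\ldots,P_k\}\models P$. The difference is only one of packaging, not of substance.
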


\begin{proof}

Suppose $\Theta_p$ is complete and fix $P\in\mathcal{FO}$. As $\Theta_p$ is complete, either $P$ or $\lnot P$ is a semantic consequence of $\Theta_p$.
 If $\Theta_p\models P$, by compactness, there is a finite set
$\{P_1,P_2\ldots,P_k\}\subseteq\Theta_p$ such that $\{P_1,P_2\ldots,P_k\}\models P$. Therefore
$\mathbb{P}_n[P_1\land P_2\cdots\land P_k]\leq\mathbb{P}_n[P].$

As $\mathbb{P}_n[P_1\land P_2\cdots\land P_k]\to 1$ we have also $\mathbb{P}_n[P]\to 1$. Similarly, if $\Theta_p\models\lnot P$ one has $\mathbb{P}_n[\lnot P]\to 1$, so that $\mathbb{P}_n[P]\to 0$. As $P$ was arbitrary, 
$p$ is a zero-one law.

Conversely, if $p$ is a zero-one law then, for any $P\in\mathcal{FO}$, we have either $P\in\Theta_p$ or $\lnot P\in\Theta_p$. One cannot have both, as $\Theta_p$ is consistent. So $\Theta_p$ is complete.
\end{proof}

As $\Theta_p$ is consistent and has no finite models, G\"odel's Completeness Theorem and L\"owenhein-Skolem give that the requirement of $\Theta_p$ being complete is equivalent to asking that all
countable models of $\Theta_p$ satisfy exactly the same first-order properties, a situation described in Logic by saying that all countable models are \emph{elementarily equivalent}. One obvious sufficient condition is that $\Theta_p$ be \emph{$\aleph_0$-categorical}, that is, that $\Theta_p$ has, apart from isomorphism, a unique countable model. We shall see several examples where $\Theta_p$ is $\aleph_0$-categorical and other examples where the countable models of $\Theta_p$ are elementarily equivalent but not necessarily isomorphic.

We summarize the above observations in the following corollary, more suitable for our applications.

\begin{Cor}

A function $p$ is a zero-one law if, and only if, all models of the almost sure theory $\Theta_p$ are elementarily equivalent. 
In particular, if $\Theta_p$ is $\aleph_0$-categorical, then $p$ is a zero-one law.

\end{Cor}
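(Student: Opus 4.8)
The plan is to reduce everything to the theorem just proved, that $p$ is a zero-one law if and only if $\Theta_p$ is complete, and then to establish the purely logical equivalence between completeness of $\Theta_p$ and elementary equivalence of its models. First I would prove the forward implication: assuming $\Theta_p$ is complete, fix any sentence $\phi\in\mathcal{FO}$; by completeness exactly one of $\Theta_p\models\phi$ or $\Theta_p\models\lnot\phi$ holds, and whichever it is must hold in \emph{every} model of $\Theta_p$. Hence any two models of $\Theta_p$ assign the same truth value to $\phi$, and since $\phi$ was arbitrary they are elementarily equivalent.

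For the converse I would argue by contraposition. Suppose $\Theta_p$ is not complete, so there is a sentence $\phi$ with $\Theta_p\not\models\phi$ and $\Theta_p\not\models\lnot\phi$. Then both $\Theta_p\cup\{\phi\}$ and $\Theta_p\cup\{\lnot\phi\}$ are consistent, and by G\"odel's Completeness Theorem each admits a model; applying the Downward L\"owenheim--Skolem Theorem, each admits a countable model, say $M_1\models\phi$ and $M_2\models\lnot\phi$. Both $M_1$ and $M_2$ are models of $\Theta_p$, yet they disagree on $\phi$, so they are not elementarily equivalent. This shows that elementary equivalence of all (countable) models forces completeness.

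The only delicate point is reconciling the phrase ``all models'' with the countable models produced by L\"owenheim--Skolem, and this is exactly where I would lean on the facts established earlier, that $\Theta_p$ is consistent and has no finite models. Since every model is elementarily equivalent to a countable one (again by L\"owenheim--Skolem), elementary equivalence of all countable models is already enough; conversely the witnesses $M_1,M_2$ above can be taken countable, so the equivalence holds at the level of countable models throughout. Combining this equivalence with the preceding theorem yields the stated characterization.

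Finally, for the $\aleph_0$-categorical clause: if $\Theta_p$ has, up to isomorphism, a single countable model, then any two countable models are isomorphic and hence elementarily equivalent, since isomorphic structures satisfy the same first-order sentences. By the characterization just proved this makes $\Theta_p$ complete, so $p$ is a zero-one law. I expect no serious obstacle here; the argument is essentially an assembly of the compactness and L\"owenheim--Skolem results already quoted, the main care being to keep all produced models countable and infinite so that the $\aleph_0$-categoricity hypothesis can be invoked.
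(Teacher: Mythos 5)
Your proposal is correct and follows essentially the same route as the paper: the paper simply cites the preceding theorem (zero-one law iff $\Theta_p$ is complete) together with the standard equivalence, via G\"odel's Completeness Theorem and downward L\"owenheim--Skolem, between completeness and elementary equivalence of all (countable) models, plus the observation that $\aleph_0$-categoricity implies elementary equivalence. You have merely written out in full the details the paper leaves as a summary of "the above observations."
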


Uses of the above result require the ability to recognize when any two models $H_1$ and $H_2$  of $\Theta_p$ are elementarily
equivalent. This is, usually, a simple matter in case $H_1$ and $H_2$ are isomorphic.
It is convenient to have at hand an instrument suitable to detecting when two structures of a first-order theory are elementarily equivalent regardless of being isomorphic.

Next, we briefly present the Ehrenfeucht-Fra\"iss\'e Game, which is a classic example of such an instrument.

           \subsection{The Ehrenfeucht-Fra\"iss\'e Game} \label{game}

This game has two players, called Spoiler and Duplicator, and two uniform hypergraphs $H_1$ and $H_2$ conventionally on disjoint sets of vertices. These
hypergraphs are known to both players. The game has a certain number $k$ of rounds, fixed in advance and also known to both players.
In each round, Spoiler selects one vertex not previously selected in either hypergraph and then Duplicator selects another vertex not previously selected in the other hypergraph.
At the end of the $k$-th round, the vertices $x_1,\ldots,x_k$ have been chosen on $H_1$ and $y_1,\ldots,y_k$ on $H_2$. Duplicator wins if, for all
$\{i_0,i_1,\ldots,i_d\}\subseteq\{1,2\ldots,k\}$, $\{x_{i_0},\ldots,x_{i_d}\}$ is an edge in $H_1$ if and only if the corresponding $\{y_{i_0},\ldots,y_{i_d}\}$ is an edge in $H_2$.
Spoiler wins if Duplicator does not. We denote the above described game by $\ehf\left(H_1,H_2;k\right)$.

As a technical point, the above description of the game works only if $k\leq\min\{\left|H_1\right|,\left|H_2\right|\}$. If that is not the case, we adopt the convention that Duplicator 
wins the game if, and only if, $H_1$ and $H_2$ are isomorphic.

This kind of reasoning was used for the first time by R. Fra\"iss\'e in his PhD thesis in the more general context of purely relational structures with finite predicate symbols. 
The formulation as a game is due to Andrzej Ehrenfeucht. A proof of the following proposition in the particular case of
graphs can be found in Joel Spencer's book  \cite{spencer}, whose argument applies, \emph{mutatis mutandis}, to uniform hypergraphs.

\begin{Prop}   \label{winning strategy}

A necessary and sufficient condition for the hypergraphs $H_1$ and $H_2$ to be elementarily equivalent is that, for all $k\in\mathbb{N}$, Duplicator has a winning strategy for the game $\ehf(H_1,H_2;k)$.

\end{Prop}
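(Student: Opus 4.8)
The plan is to prove the stronger, finite-stage statement from which the proposition follows by letting $k\to\infty$, via the standard method of \emph{quantifier rank}. For a formula $\phi\in\mathcal{FO}$ define its quantifier rank $\mathrm{qr}(\phi)$ inductively: atomic formulas (those built from $\sigma$ and equality) have rank $0$; negation and Boolean connectives leave the rank unchanged, taking the maximum for binary connectives; and $\mathrm{qr}(\exists x\,\psi)=\mathrm{qr}(\forall x\,\psi)=\mathrm{qr}(\psi)+1$. Two hypergraphs are elementarily equivalent precisely when they satisfy the same sentences, and each sentence has some finite quantifier rank; hence $H_1$ and $H_2$ are elementarily equivalent if and only if, for every $k$, they agree on all sentences of quantifier rank at most $k$. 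The proposition will therefore follow once we show, for each fixed $k$, that Duplicator wins $\ehf(H_1,H_2;k)$ if and only if $H_1$ and $H_2$ satisfy the same sentences of quantifier rank at most $k$.

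First I would set up the inductive invariant governing intermediate positions. For tuples $\bar a=(a_1,\dots,a_j)$ in $H_1$ and $\bar b=(b_1,\dots,b_j)$ in $H_2$, write $(H_1,\bar a)\equiv_m(H_2,\bar b)$ to mean that every formula $\phi(v_1,\dots,v_j)$ of quantifier rank at most $m$ satisfies $H_1\models\phi(\bar a)$ if and only if $H_2\models\phi(\bar b)$. The central lemma, proved by induction on $m$, is that $(H_1,\bar a)\equiv_m(H_2,\bar b)$ holds if and only if Duplicator has a winning strategy in the $m$-round game played from the position in which $\bar a$ and $\bar b$ have already been selected. The base case $m=0$ simply says that the correspondence $a_i\mapsto b_i$ is a partial isomorphism, preserving $\sigma$ and equality in both directions, which is exactly the winning condition of a game with no rounds remaining.

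For the inductive step I would argue both directions by the usual back-and-forth analysis. Assuming the position is $\equiv_{m+1}$-equivalent, when Spoiler plays a new vertex, say $a_{j+1}$ in $H_1$, Duplicator must produce a matching $b_{j+1}$; the key is that, because the language $\{\sigma\}$ is finite and relational, there are only finitely many formulas of quantifier rank at most $m$ in $j+1$ free variables up to logical equivalence, so the full rank-$m$ type of $a_{j+1}$ over $\bar a$ is captured by a single formula $\chi(\bar v,v)$ of rank at most $m$. Then $H_1\models\exists v\,\chi(\bar a,v)$, a statement of rank at most $m+1$, so $\equiv_{m+1}$-equivalence furnishes a witness $b_{j+1}$ in $H_2$; by construction the new position is $\equiv_m$-equivalent and the inductive hypothesis supplies Duplicator's winning continuation. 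Conversely, if Duplicator wins the $(m+1)$-round game, then any formula of rank at most $m+1$ may be put into a normal form as a Boolean combination of atomic formulas and formulas $\exists v\,\psi$ with $\mathrm{qr}(\psi)\le m$, and each such existential is transferred across by following Duplicator's response to the witness chosen by Spoiler and invoking the inductive hypothesis on the resulting $\equiv_m$-equivalent position.

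The hard part, and the only step that is not bookkeeping, is the finiteness of the set of rank-$m$ formulas up to equivalence used in the forward direction; this \emph{Hintikka normal form} is where the finite relational signature is essential, since it is what allows the otherwise infinitary ``type'' of a vertex to be replaced by a single first-order formula of controlled quantifier rank. Finally, taking $\bar a,\bar b$ empty and letting $k$ range over all of $\mathbb{N}$ yields the proposition. The degenerate case $k>\min\{|H_1|,|H_2|\}$ is covered by the convention adopted in the definition of the game: a finite hypergraph is elementarily equivalent to $H_2$ only if the two are isomorphic, its isomorphism type being pinned down by a single sentence specifying the number of vertices together with the complete edge diagram, so the convention is precisely the one that keeps the stated equivalence valid. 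In the application to $\Theta_p$, which has no finite models, both hypergraphs are infinite and this case does not arise.
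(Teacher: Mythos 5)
Your argument is the standard proof of the Ehrenfeucht--Fra\"iss\'e theorem via quantifier rank and Hintikka (rank-$m$ type) formulas, and it is correct, including the observation that the finite relational signature is what makes the set of rank-$m$ formulas finite up to equivalence and the handling of the degenerate case $k>\min\{|H_1|,|H_2|\}$. The paper gives no proof of its own but defers to Spencer's book, where essentially this same back-and-forth induction is carried out for graphs, so your proposal matches the intended argument.
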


Now it is easy to see the connection of the game to zero-one laws.

\begin{Cor}

If for all countable models $H_1$ and $H_2$ of $\Theta_p$ and all $k\in\mathbb{N}$ Duplicator has a winning strategy for $\ehf(H_1,H_2;k)$ then $p$ is a zero-one law.

\end{Cor}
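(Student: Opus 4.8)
The plan is to read this corollary as a one-line composition of two tools already in hand: the Ehrenfeucht-Fra\"iss\'e criterion of Proposition \ref{winning strategy} and the equivalence between zero-one laws and completeness of $\Theta_p$. No genuinely new argument is needed; the work is entirely in chaining the implications correctly.

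First I would apply Proposition \ref{winning strategy} pairwise. The hypothesis grants Duplicator a winning strategy in $\ehf(H_1,H_2;k)$ for every $k\in\mathbb{N}$ and every pair of countable models $H_1,H_2$ of $\Theta_p$; by that proposition this is exactly the assertion that any two countable models of $\Theta_p$ are elementarily equivalent. Next I would invoke the discussion preceding the earlier Corollary, where G\"odel's Completeness Theorem and the Downward L\"owenheim-Skolem Theorem combine to show that, because $\Theta_p$ is consistent and has no finite models, its completeness is equivalent to all of its countable models being elementarily equivalent. The elementary equivalence just obtained therefore gives that $\Theta_p$ is complete. Finally, the Theorem characterizing zero-one laws through completeness yields that $p$ is a zero-one law, as desired.

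The only point deserving care — and it is not so much an obstacle as a place where a cited result does the heavy lifting — is the jump from a hypothesis about countable models alone to completeness, which a priori involves every property in $\mathcal{FO}$ and hence all models of $\Theta_p$. This is precisely where L\"owenheim-Skolem is essential: testing elementary equivalence on countable models suffices, since any property consistent with $\Theta_p$ is already witnessed in a countable model. Once this reduction is acknowledged, the proof is a routine concatenation of the results above.
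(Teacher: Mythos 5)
Your proposal is correct and is exactly the argument the paper intends: it states this corollary without proof as an immediate consequence of Proposition \ref{winning strategy} together with the preceding corollary (zero-one law $\Leftrightarrow$ all countable models of $\Theta_p$ elementarily equivalent, via G\"odel completeness and L\"owenheim--Skolem). Your explicit remark on why countable models suffice is precisely the reduction the paper performs in the discussion before that corollary.
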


For the description of some situations when there is a winning strategy for Duplicator without $H_1$ and $H_2$ being necessarily isomorphic, we refer the reader to
      \cite{tese}, where statements can be found in the general case of uniform hypergraphs. We also refer to \cite{spencer} for the proofs of these statements in the particular case of graphs.
  The generalization of the arguments to fit uniform hypergraphs are straightforward.

     \section{The Space of Completions} \label{completions}

For the convenience of the exposition, we construct a topological space. Later, examples will be given that establish a relation
between what follows and random hypergraphs. This space is always compact, metrizable and totally disconnected, but its structure (and that of 
 the set of its limit points, the derived set) varies. For instance, the space may be finite, countably infinite or a Cantor set.

Let $\Theta$ be an elementary theory on $\mathcal{FO}$.
We say a theory $\tilde{\Theta}\supseteq\Theta$ is a \emph{completion} of $\Theta$ if it is a complete theory, 
which is to say: $\tilde{\Theta}$ is consistent and, for all first-order sentences $\phi$, either
$\phi\in\tilde{\Theta}$ or $\lnot\phi\in\tilde{\Theta}$.    

 Define
 \begin{gather*}
          \mathcal{K}(\Theta)=\{\tilde{\Theta}\supseteq\Theta \ |\ \tilde{\Theta} \text{ is a completion of } \Theta\}, \\
      A_\phi=\{\tilde{\Theta}\in\mathcal{K}(\Theta)\ |\ \phi\in\tilde{\Theta}\}, \quad      \mathcal{A}=\{A_\phi\mid\phi\in\mathcal{FO}\}.
 \end{gather*}            
       The relations $A_\phi\cap A_\psi=A_{\phi\land\psi}$, ${(A_\phi)}^\complement=A_{\lnot\phi} $, and $ A_{\phi\lor\lnot\phi}=\mathcal{K}(\Theta)$
show that $\mathcal{A}$ is an algebra of subsets of $\mathcal{K}(\Theta)$. Furthermore, $\mathcal{A}$ is a basis for a topology in $\mathcal{K}(\Theta)$ in relation to which the sets $A_\phi$
 are all clopen (that is, closed and open).
Clearly, $\Theta$ is inconsistent if and only if $\mathcal{K}(\Theta)=\emptyset$ and $\Theta$ is complete if
and only if $\mathcal{K}(\Theta)=\{\Theta\}$.

An alternative construction could be given by considering $\mathcal{K}(\Theta)$ as a class of \emph{structures} instead of a class of completions.
Consider the class $\mathcal{G}$ of all $(d+1)$-uniform hypergraphs. Let
         $\mathcal{K}$ be the quotient space $\mathcal{G}/{\sim}$, where $\sim$ is the elementary equivalence relation.
           We then turn $\mathcal{K}$ into a topological space by considering the sets of the form
       $A_\phi=\{[H]\in\mathcal{K}\ |\ H\models\phi\}$ to be
basic open sets. Finally, set 
                  $$\mathcal{K}(\Theta)=\{[H]\in\mathcal{K}\ |\ H\models\Theta\}\subseteq\mathcal{K}.$$ 
Although these two constructions impose different meanings on $A_\phi$ and $\mathcal{K}(\Theta)$, the correspondence between them is natural. Therefore we will
   use the same notation in either context.

The following structure arises naturally when one considers the space $\mathcal{K}(\Theta)$.
In what follows, $\mathcal{T}$ is a rooted tree and recall that a \emph{branch} is a maximal 
totally ordered subset.

    \begin{Def}
    
$\mathcal{T}$ is a spanning tree for $\mathcal{K}(\Theta)$ if the following conditions hold:

         \begin{itemize}
        
                   \item Every node of $\mathcal{T}$ is a pair $(A,h)$, where $A$ is a non-empty clopen set and $h\in\mathbb{N}$ is the height of the node.
                   \item $\mathcal{T}$ is rooted on $(\mathcal{K}(\Theta),0)$.
                   \item For every node $(A,h)$, its children are $(A_j,h+1), 1\leq j\leq k$, where the sets $A_j$ are disjoint and $A=\bigcup_{1\leq j\leq k} A_j$.
                   \item For every branch $\mathcal{B}$, $\bigcap_{(A,h)\in\mathcal{B}}A=\{\Theta_\mathcal{B}\}$ is a singleton.

          \end{itemize}

   \end{Def}    
   
   The option of putting every node of a spanning tree to be a pair $(A_\phi,h)$ is a technical convenience, designed to allow a clopen set $A_\phi$ to appear in multiple
    nodes of $\mathcal{T}$.  Many times we abuse notation by referring 
    to the node $A_\phi$ when we really mean $(A_\phi,h)$.
   
   Note that the compactness of $\mathcal{K}(\Theta)$ implies that every node has a finite number of children and that the set $\mathcal{N}$ of nodes of $\mathcal{T}$ 
   forms a basis for the topology of $\mathcal{K}(\Theta)$, and that, given any two nodes $A_{\phi_1},A_{\phi_2}\in\mathcal{N}$, one of the following three conditions holds:
   $A_{\phi_1}\subseteq A_{\phi_2}$, $A_{\phi_2}\subseteq A_{\phi_1}$, $A_{\phi_1}\cap A_{\phi_2}=\emptyset$.
   Note also that, if $\mathcal{B}$ is a branch, then $\Theta_\mathcal{B}$ is a complete theory and
     $\mathcal{K}(\Theta)=\{\Theta_\mathcal{B} \ | \ \mathcal{B} \text{ is a branch of } \mathcal{T}\}$.
   We shall now see that $\mathcal{K}(\Theta)$ always admits a spanning tree and, along the way, prove some of its basic properties.
      

    \begin{figure}[!htb]
    \centering
    \includegraphics[height=150mm,angle=0]{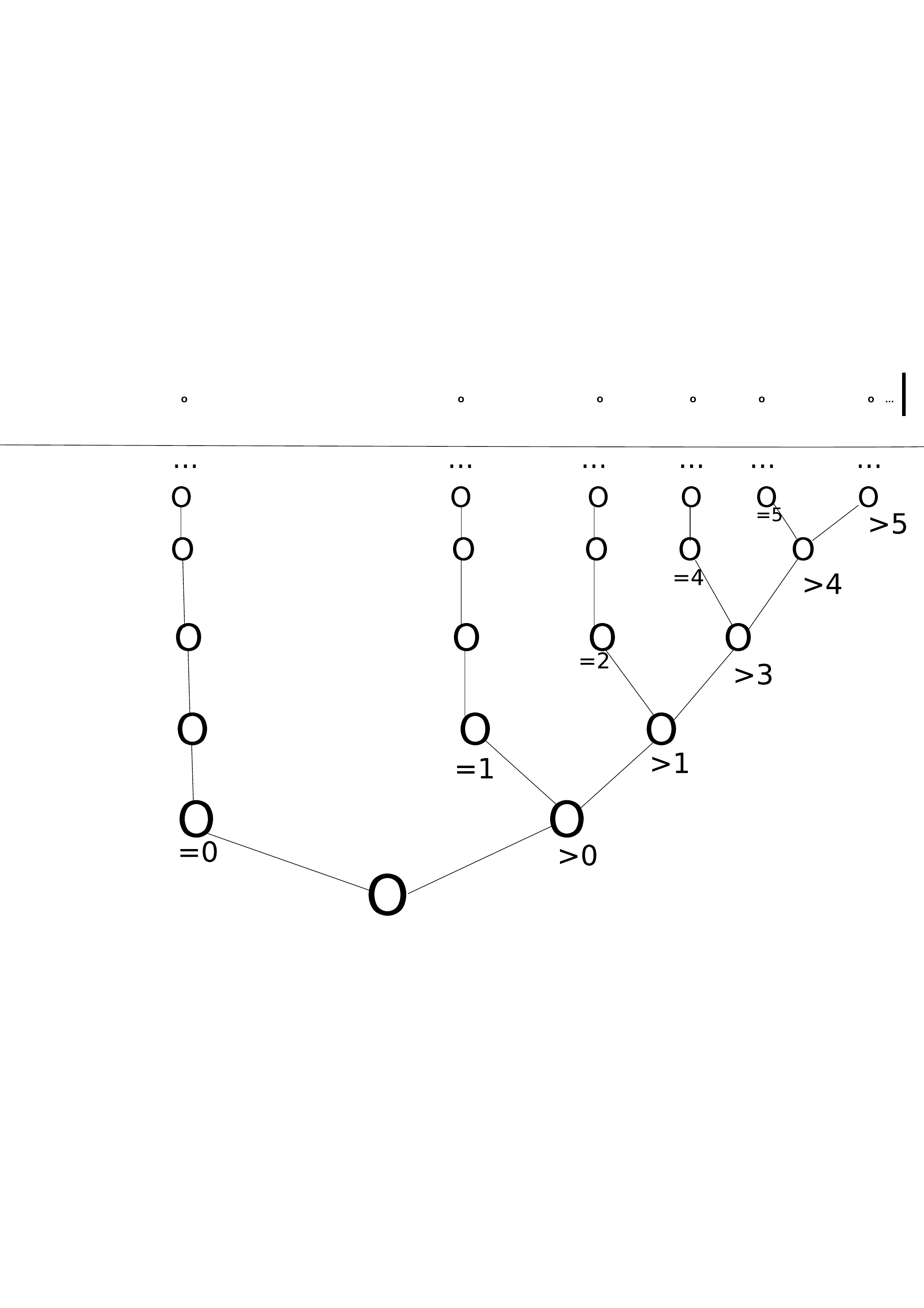}
    \caption{($d=2$) A spanning tree}
    \label{figRotulo}
  \end{figure}

\begin{Prop}

$\mathcal{K}(\Theta)$ is a totally disconnected second-countable compact metrizable space and admits a spanning tree.

\end{Prop}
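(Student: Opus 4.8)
The plan is to treat $\mathcal{K}(\Theta)$ as the Stone space of the Lindenbaum--Tarski algebra of $\Theta$ and to extract each property from the already-verified fact that $\mathcal{A}=\{A_\phi\}$ is an algebra of clopen sets forming a basis. The two ``soft'' properties come first. Since the language $\{\sigma\}$ is finite and relational, there are only countably many sentences, so $\mathcal{A}$ is a \emph{countable} basis of clopen sets, which gives second countability. For the Hausdorff and totally disconnected properties, given distinct completions $\tilde\Theta_1\neq\tilde\Theta_2$ I would pick a sentence $\phi$ with $\phi\in\tilde\Theta_1$ but $\phi\notin\tilde\Theta_2$; completeness of $\tilde\Theta_2$ gives $\lnot\phi\in\tilde\Theta_2$, so the disjoint clopen sets $A_\phi$ and $A_{\lnot\phi}=(A_\phi)^\complement$ separate the two points. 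A Hausdorff space possessing a basis of clopen sets has singleton connected components, hence is totally disconnected.

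The substantive topological point is compactness, and this is exactly where the first-order Compactness Result is used. Because the $A_\phi$ are clopen and $\mathcal{A}$ is closed under complementation, every closed set is an intersection of members of $\mathcal{A}$, so the finite-intersection criterion on this closed basis suffices for compactness. Thus it is enough to show that any subfamily $\{A_{\phi_i}\}_{i\in I}$ with the finite intersection property has non-empty total intersection. For each finite $F\subseteq I$ we have $\bigcap_{i\in F}A_{\phi_i}=A_{\bigwedge_{i\in F}\phi_i}\neq\emptyset$; identifying $A_\psi$ with $\mathcal{K}(\Theta\cup\{\psi\})$, the stated equivalence ``$\mathcal{K}(\cdot)=\emptyset$ iff inconsistent'' shows $\Theta\cup\{\phi_i : i\in F\}$ is consistent for every finite $F$. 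By compactness of first-order logic the whole family $\Theta\cup\{\phi_i : i\in I\}$ is consistent, and any completion extending it lies in $\bigcap_{i\in I}A_{\phi_i}$. This yields compactness, and then metrizability is immediate: a compact Hausdorff second-countable space is metrizable by Urysohn's theorem (the spanning tree below would also supply an explicit ultrametric).

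It remains to build a spanning tree. I would enumerate the sentences as $\phi_1,\phi_2,\dots$, root the tree at $(\mathcal{K}(\Theta),0)$, and declare the children of a node $(A,h)$ to be the non-empty sets among $A\cap A_{\phi_{h+1}}$ and $A\cap A_{\lnot\phi_{h+1}}$, each at height $h+1$. These are clopen, pairwise disjoint and cover $A$, so the structural axioms hold (with $k\in\{1,2\}$; when $A\subseteq A_{\phi_{h+1}}$ the node has the single child $A$, exactly the harmless repetition the definition anticipates), and an induction shows the height-$h$ nodes partition $\mathcal{K}(\Theta)$. For the branch condition, a branch is a nested chain $\mathcal{K}(\Theta)=A^{(0)}\supseteq A^{(1)}\supseteq\cdots$ of non-empty closed sets, so compactness forces $\bigcap_h A^{(h)}\neq\emptyset$; and since each $A^{(i)}$ is contained in $A_{\phi_i}$ or in $A_{\lnot\phi_i}$, any two completions in this intersection decide every $\phi_i$ the same way and are therefore equal. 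Hence $\bigcap_h A^{(h)}=\{\Theta_\mathcal{B}\}$ is a singleton.

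The main obstacle is genuinely the compactness step: it is the only ingredient that is not formal bookkeeping, and it rests squarely on the Compactness Result for first-order logic. The verification of the branch condition is the second delicate point, since it secretly invokes the compactness of $\mathcal{K}(\Theta)$ to know the nested intersection is non-empty before the ``agreement on every $\phi_i$'' argument collapses it to a single completion; everything else (second countability, separation, total disconnectedness, metrizability) follows routinely once the countable clopen basis and compactness are in hand.
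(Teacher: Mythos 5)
Your proposal is correct and follows essentially the same route as the paper: second countability from the countability of the language, total separation via the complementary clopen sets $A_\phi$ and $A_{\lnot\phi}$, and compactness reduced to the Compactness Theorem of first-order logic (you phrase this dually through the finite intersection property, while the paper works with covers by basic open sets and inconsistent families of negations). The only cosmetic differences are that the paper exhibits an explicit ultrametric and takes the partition into balls of radius $2^{-N}$ as the $N$-th level of the spanning tree, whereas you obtain metrizability from Urysohn's theorem and build the tree by splitting on one sentence per level; both constructions settle the singleton-branch condition by the same compactness-plus-agreement argument.
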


\begin{proof}

It suffices to consider $\Theta=\emptyset$ (that is, the particular case of $\mathcal{K}=\mathcal{K}(\emptyset)$), 
because $\mathcal{K}(\Theta)$ is a closed subset of $\mathcal{K}$.
Second countability is immediate because the basis exhibited above is countable.

To see that $\mathcal{K}$ is totally disconnected, it suffices to see that it is totally separated.
To this end, take $\Theta_1,\Theta_2\in\mathcal{K}$, 
such that $\Theta_1\neq\Theta_2$. 
There is $\phi$ such that $\Theta_1\models\phi$ but $\Theta_2\models\lnot\phi$. Then
$\Theta_1\in A_\phi$, $\Theta_2\in A_{\lnot\phi}$, $A_\phi\cap A_{\lnot\phi}=\emptyset$ 
and $A_\phi\cup A_{\lnot\phi}=\mathcal{K}$ .

 For compactness, it clearly suffices to see that any cover of $\mathcal{K}$ by basic open sets has
 a finite subcover. With this in mind, note that $\mathcal{K}=\bigcup_{\phi\in I}A_\phi$ if, and only
 if, the theory $\{\lnot\phi\ |\phi\in I\}$ is inconsistent. In this case, by the compactness of first
 order logic, there is an inconsistent finite sub-theory $\{\lnot\phi_1,\lnot\phi_2,\ldots,\lnot\phi_k\}$. Hence
 $A_{\phi_1}\cup A_{\phi_2}\cup\ldots\cup A_{\phi_k}$ is a finite subcover of
 $\bigcup_{\phi\in I}A_\phi$ and we are done.

 One can give $\mathcal{K}$ an explicit metric (actually, an ultrametric). Define the size of an elementary sentence $\phi$ to be the number of characters in $\phi$.
Set the distance between two distinct complete theories $\Theta_1,\Theta_2\in\mathcal{K}$
 to be $2^{-N}$, where $N$ is the smallest size of a sentence in the symmetric difference $\Theta_1\Delta \Theta_2$.

 Notice that balls of radius $2^{-N}$ form a partition of $\mathcal{K}(\Theta)$ into finitely many clopen sets. These form, by definition, the $N$-th level of a spanning tree.
 \end{proof}

Note that all branches are infinite.
 Isolated points in $\mathcal{K}(\Theta)$ correspond to \emph{isolated branches}, that is, branches for which the first coordinate is eventually constant.
 Therefore, if $\mathcal{K}(\Theta)$ admits a spanning tree none of whose branches is isolated, then
it is a Cantor space.

Also, the algebra $\mathcal{A}$ consists of the sets $A$ which are finite disjoint unions
          $$A=\bigsqcup_{1\leq i\leq k}A_{\phi_i}, \qquad A_{\phi_i}\in\mathcal{N}.  $$

                \subsection{Borelian Probabilities on $\mathcal{K}(\Theta)$}

In what follows, we describe how a spanning tree can be used to specify borelian probability measures on the space $\mathcal{K}(\Theta)$.      
  
Let $\mu$ be a borelian probability measure on $\mathcal{K}(\Theta)$ and $\mathcal{N}$ the set of all nodes on a spanning tree $\mathcal{T}$.
Then the restriction $\mu\restriction\mathcal{N}$ is \emph{hereditarily consistent}, in the sense that $\mu(\mathcal{K}(\Theta))=1$ and, for every $A\in\mathcal{N}$,
                                        $$\mu(A)=\sum_{B\text{ is a child of } A}\mu(B).$$ 
    
    Conversely, we have the next lemma.

 \begin{Lemma} \label{weightedspanningtree}
 
Any hereditarily consistent $\mu_\mathcal{N}:\mathcal{N}\to[0,1]$ extends uniquely to a borelian probability measure $\mu$ on $\mathcal{K}(\Theta)$.
 
 \end{Lemma}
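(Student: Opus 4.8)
The plan is to build $\mu$ by Carath\'eodory extension, using the partitions of $\mathcal{K}(\Theta)$ into clopen sets furnished by the levels of the spanning tree $\mathcal{T}$. For each height $h$ let $\mathcal{P}_h\subseteq\mathcal{N}$ be the set of nodes of height $h$. Since every branch is infinite, every node has at least one child; as the children of a node partition it, an easy induction shows that each $\mathcal{P}_h$ is a \emph{finite} partition of $\mathcal{K}(\Theta)$ into nonempty clopen sets, with $\mathcal{P}_{h+1}$ refining $\mathcal{P}_h$. Hereditary consistency then gives $\sum_{A\in\mathcal{P}_h}\mu_\mathcal{N}(A)=1$ for every $h$, by induction on $h$ from $\mu_\mathcal{N}(\mathcal{K}(\Theta))=1$ and the children-sum rule. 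Recall also, from the discussion preceding the lemma, that the algebra $\mathcal{A}$ consists exactly of the finite disjoint unions of nodes.

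First I would define a set function $\mu_0$ on $\mathcal{A}$ by $\mu_0\bigl(\bigsqcup_i A_{\phi_i}\bigr)=\sum_i\mu_\mathcal{N}(A_{\phi_i})$ for pairwise disjoint nodes $A_{\phi_i}\in\mathcal{N}$, with $\mu_0(\emptyset)=0$. The crucial step is to check that $\mu_0$ is well defined, i.e. independent of the chosen decomposition into nodes. Here I would use that the nodes form a laminar family (any two are nested or disjoint) together with the following consequence of hereditary consistency, proved by induction on the height: for any node $A$ and any $h$ at least the height of $A$, the blocks of $\mathcal{P}_h$ contained in $A$ partition $A$ and satisfy $\sum\{\mu_\mathcal{N}(E): E\in\mathcal{P}_h,\ E\subseteq A\}=\mu_\mathcal{N}(A)$. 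Given two decompositions of a set $B\in\mathcal{A}$ into nodes, choosing $h$ larger than the heights of all nodes involved makes $\{E\in\mathcal{P}_h: E\subseteq B\}$ a common refinement of both, and applying the displayed identity to each block of each decomposition shows that both original sums equal $\sum\{\mu_\mathcal{N}(E): E\in\mathcal{P}_h,\ E\subseteq B\}$. Taking $B$ to be a single node in particular shows that $\mu_\mathcal{N}$ assigns the same value to any two nodes carrying the same clopen set, so $\mu_0$ genuinely extends $\mu_\mathcal{N}$. Finite additivity of $\mu_0$ on $\mathcal{A}$ is then immediate.

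Next I would promote $\mu_0$ to a premeasure, i.e. verify countable additivity on $\mathcal{A}$; this is where compactness does the work and no computation is needed. If $B\in\mathcal{A}$ is the disjoint union of countably many $B_n\in\mathcal{A}$, then $B$ is closed, hence compact, and the $B_n$ form an open cover of $B$; so all but finitely many $B_n$ are empty, and $\sigma$-additivity collapses to the finite additivity already established. By Carath\'eodory's extension theorem, $\mu_0$ extends to a measure $\mu$ on the $\sigma$-algebra generated by $\mathcal{A}$. Since the nodes form a basis for the topology, this $\sigma$-algebra is exactly the Borel $\sigma$-algebra, and $\mu(\mathcal{K}(\Theta))=\mu_0(\mathcal{K}(\Theta))=1$, so $\mu$ is a Borel probability measure restricting to $\mu_\mathcal{N}$ on $\mathcal{N}$.

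Finally, for uniqueness: any Borel probability measure agreeing with $\mu_\mathcal{N}$ on $\mathcal{N}$ must agree with $\mu_0$ on all of $\mathcal{A}$ by finite additivity, and two finite measures agreeing on a generating algebra agree on the whole Borel $\sigma$-algebra (Dynkin's $\pi$--$\lambda$ theorem, or the uniqueness clause of the extension theorem). I expect the only genuine obstacle to be the well-definedness of $\mu_0$ in the second paragraph, specifically reconciling two decompositions through the common refinement supplied by a deep enough level $\mathcal{P}_h$ of the tree; everything else is standard Carath\'eodory machinery, with compactness trivializing $\sigma$-additivity.
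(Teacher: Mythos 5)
Your proposal is correct and follows essentially the same route as the paper: establish finite additivity on the algebra $\mathcal{A}$ of finite disjoint unions of nodes (you fill in the "easy induction" via the common refinement by a deep enough level of the tree), use compactness of $\mathcal{K}(\Theta)$ and clopenness of the sets in $\mathcal{A}$ to get $\sigma$-additivity for free, and then apply the Hahn--Kolmogorov/Carath\'eodory extension theorem. The only difference is that you spell out the well-definedness and uniqueness details that the paper leaves as "straightforward."
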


 \begin{proof}
 
 An easy induction shows that $\mu_\mathcal{N}$ is finitely additive.
 It is then straightforward to see that 
 $\mu_\mathcal{N}$ extends uniquely to a finitely additive function $\mu_\mathcal{A}:\mathcal{A}\to[0,1]$.
As the $A\in\mathcal{A}$ are clopen and $\mathcal{K}(\Theta)$ is compact, we see that $\mu_\mathcal{A}$ is
in fact $\sigma$-additive. Therefore, the usual Hahn-Kolmogorov extension Theorem gives that $\mu_\mathcal{A}$
extends uniquely to a borelian probability measure $\mu$ on $\mathcal{K}(\Theta)$. 
  \end{proof}        
 
 Hence we have a correspondence between hereditarily consistent functions on $\mathcal{N}$ and borelian probability measures
 on $\mathcal{K}(\Theta)$. 
 We call a pair $(\mathcal{T},\mu)$, where $\mathcal{T}$ is a spanning tree and $\mu$ is a hereditarily consistent function on the set
 of nodes of $\mathcal{T}$, a \emph{weighted spanning tree}.

            \subsection{Application to Random Hypergraphs}

We are interested in the case $\Theta=\Theta_p$, where $p:\mathbb{N}\to[0,1]$ is the edge probability of the random hypergraph
$G^{d+1}(n,p)$. 

If $p$ is a zero-one law, then $\Theta_p$ is complete and $\mathcal{K}(\Theta_p)$ trivializes to a one-point space.

Sometimes a zero-one law is too much to ask and it is interesting to consider a related broader concept.

\begin{Def}

A function $p:\mathbb{N}\to[0,1]$ is a \emph{convergence law} if, for all $\phi\in\mathcal{FO}$, the sequence $(a_n)$, with
 $a_n=\mathbb{P}_n[A_\phi]$, converges.

\end{Def}

When $p$ is a convergence law, it induces a borelian probability measure on $\mathcal{K}(\Theta_p)$.
In order to see that a certain edge probability $p$ is a convergence law, it suffices to see that the probabilities
of the nodes on a spanning tree for $\mathcal{K}(\Theta_p)$ converge.
In what follows, $\mathcal{T}$ is a spanning tree on $\mathcal{K}(\Theta_p)$
with set of nodes $\mathcal{N}$.

\begin{Prop}

If, for every $A\in\mathcal{N}$, the sequence $\mathbb{P}_n[A]$ converges to the real number $\mu(A)$, 
then $p$ is a convergence law, $(\mathcal{T},\mu)$ is a weighted spanning tree on $\mathcal{K}(\Theta_p)$ and
 $\mu$ extends uniquely to a probability measure on $\mathcal{K}(\Theta_p)$ such that, for every elementary $\phi$,
                      $$\mu(\phi)=\lim_{n\to\infty}\mathbb{P}_n[A_\phi].$$

\end{Prop}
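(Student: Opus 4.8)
The plan is to prove the three assertions in order: first that $\mu\restriction\mathcal{N}$ is hereditarily consistent (so that $(\mathcal{T},\mu)$ is a weighted spanning tree), then to invoke Lemma \ref{weightedspanningtree} for the unique extension to a measure, and finally to transfer convergence from the nodes to arbitrary basic clopen sets. The single device underlying every step is the observation already used in the proof that completeness is equivalent to being a zero-one law: whenever $\Theta_p\models\alpha$, compactness yields a finite $\{P_1,\ldots,P_k\}\subseteq\Theta_p$ with $\{P_1,\ldots,P_k\}\models\alpha$, so $\mathbb{P}_n[\alpha]\geq\mathbb{P}_n[P_1\land\cdots\land P_k]\to 1$ and hence $\mathbb{P}_n[\alpha]\to 1$. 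Thus any entailment holding modulo $\Theta_p$ becomes an asymptotic statement about the probabilities $\mathbb{P}_n$.

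For hereditary consistency, the root $(\mathcal{K}(\Theta_p),0)$ corresponds to a tautology, so $\mathbb{P}_n[\mathcal{K}(\Theta_p)]=1$ for every $n$ and $\mu(\mathcal{K}(\Theta_p))=1$. Given a node $A=A_\psi$ with children $A_1=A_{\psi_1},\ldots,A_k=A_{\psi_k}$ (finitely many, by compactness of $\mathcal{K}(\Theta_p)$), the relation $A=\bigsqcup_j A_j$ means precisely that $\Theta_p\models\bigl(\psi\leftrightarrow\bigvee_j\psi_j\bigr)$ and $\Theta_p\models\lnot(\psi_i\land\psi_j)$ for $i\neq j$. By the device above, $\mathbb{P}_n[\psi_i\land\psi_j]\to 0$ and $|\mathbb{P}_n[\psi]-\mathbb{P}_n[\bigvee_j\psi_j]|\to 0$; an inclusion–exclusion expansion of $\mathbb{P}_n[\bigvee_j\psi_j]$, in which every intersection term tends to $0$ and the sum has finitely many terms, then gives $\mu(A)=\lim_n\mathbb{P}_n[\psi]=\sum_j\lim_n\mathbb{P}_n[\psi_j]=\sum_j\mu(A_j)$. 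Hence $\mu\restriction\mathcal{N}$ is hereditarily consistent and $(\mathcal{T},\mu)$ is a weighted spanning tree.

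Lemma \ref{weightedspanningtree} now extends $\mu$ uniquely to a borelian probability measure on $\mathcal{K}(\Theta_p)$, in particular finitely additive on the algebra $\mathcal{A}$. It remains to show that $p$ is a convergence law with $\lim_n\mathbb{P}_n[A_\phi]=\mu(A_\phi)$ for arbitrary elementary $\phi$. Here I would use that every $A_\phi$ belongs to $\mathcal{A}$ and, as recorded just after the construction of $\mathcal{A}$, is therefore a finite disjoint union of nodes $A_\phi=\bigsqcup_i A_{\psi_i}$ with $A_{\psi_i}\in\mathcal{N}$. Exactly the same inclusion–exclusion argument, now from $\Theta_p\models\bigl(\phi\leftrightarrow\bigvee_i\psi_i\bigr)$ and $\Theta_p\models\lnot(\psi_i\land\psi_j)$, shows $\mathbb{P}_n[\phi]\to\sum_i\mu(A_{\psi_i})$; so the limit exists (whence $p$ is a convergence law) and, by finite additivity, equals $\mu\bigl(\bigsqcup_i A_{\psi_i}\bigr)=\mu(A_\phi)$. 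Putting $\mu(\phi):=\mu(A_\phi)$ yields the displayed formula.

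The main obstacle, and the only place demanding care, is that the tree's disjointness and covering relations live in $\mathcal{K}(\Theta_p)$ and hold only modulo $\Theta_p$; they are not exact identities of events in the finite spaces $G^{d+1}(n,p)$, so the additivity $\mathbb{P}_n[A]=\sum_j\mathbb{P}_n[A_j]$ fails for fixed $n$ and must be recovered asymptotically. The compactness-based transfer, together with the finiteness of each node's set of children so that only finitely many vanishing error terms occur, is exactly what repairs this, and since it is invoked identically in the hereditary-consistency step and in the final convergence step it is worth isolating once at the outset.
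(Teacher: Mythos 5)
Your proof is correct and follows essentially the same route as the paper's: decompose each $A_\phi$ (and each node) into a finite disjoint union of nodes, transfer the set-theoretic relations in $\mathcal{K}(\Theta_p)$ into asymptotic statements about $\mathbb{P}_n$ via compactness of $\Theta_p$, and invoke Lemma \ref{weightedspanningtree} for the unique extension. The paper compresses all of this into two sentences; your write-up supplies the details it leaves implicit, in particular the observation that the tree's additivity relations hold only modulo $\Theta_p$ and must be recovered in the limit.
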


\begin{proof}

To see that $p$ is a convergence law, fix an elementary sentence $\phi$. 
Then $\mathbb{P}[\phi]$ converges to $\sum_i\mu(A_{\phi_i})$, where
$A_\phi=\bigsqcup_{1\leq i\leq k}A_{\phi_i}, A_{\phi_i}\in\mathcal{N}.  $
The remaining claims are easy, in view of Lemma \ref{weightedspanningtree}.
\end{proof}

                                              \section{Big-Bang}

                         \subsection{Counting of Berge-Tree Components}

Now we proceed to investigate zero-one and convergence laws in the early stages of the evolution of $G^{d+1}(n,p)$. More precisely, we investigate edge functions before the double jump:
            $$0\leq p(n)\ll n^{-d}.$$
For functions $p$ in that range, $G^{d+1}(n,p)$ almost surely has no cycles, in the following sense:

 Recall that the \emph{incidence graph} $G(H)$ of a hypergraph $H$ is a bipartite graph with $V(H)$ on one side and $E(H)$ on the other and such that, for all $v\in V(H)$ and $e\in E(H)$, there is an edge connecting
$v$ and $e$ in $G(H)$ if, and only if, $v\in e$ in $H$. A \emph{cycle} in a hypergraph is a cycle in its incidence graph. We say a hypergraph is \emph{Berge-acyclic} if has no cycles. From now on, we shall refer to Berge-acyclic hypergraphs simply as \emph{acyclic hypergraphs}.

\begin{Def}

A \emph{Berge-tree} is a connected acyclic uniform hypergraph. The \emph{order} of a finite Berge-tree is its number of edges.

\end{Def}

  \begin{figure}[!htb]
    \centering
    \includegraphics[height=30mm,angle=90]{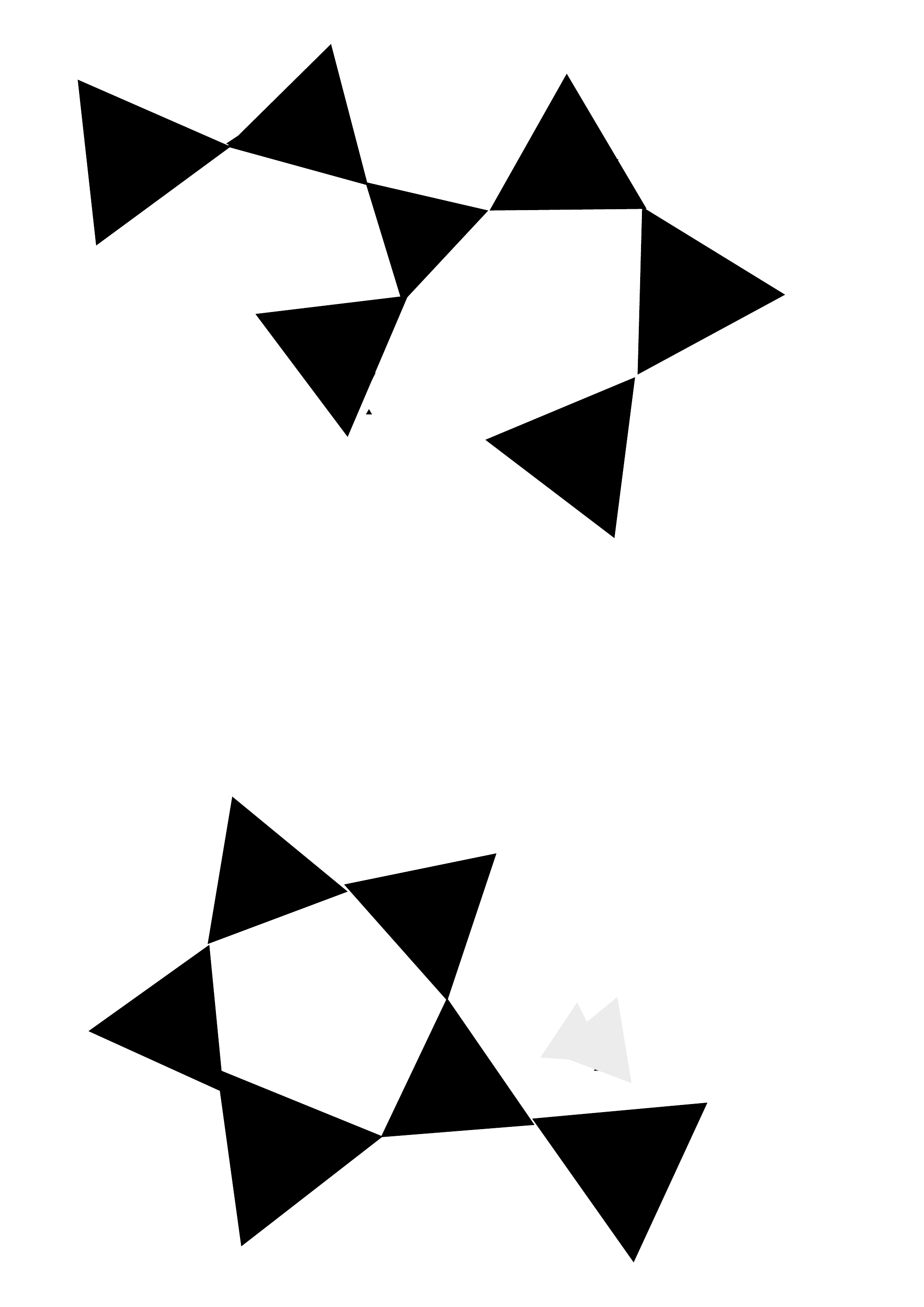}
    \caption{($d=2$) Left: a Berge-tree. Right: Not a Berge-tree.}
    \label{figRotulo}
  \end{figure}

\begin{Prop}

If $0\leq p\ll n^{-d}$ and $C$ is a fixed finite cycle, then a.a.s. $G^{d+1}(n,p)$ does not have a copy of 
$C$ as a sub-hypergraph.

\end{Prop}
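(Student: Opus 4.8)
The plan is to use the first moment method (Markov's inequality) combined with the threshold result of Vantsyan (Theorem~\ref{vantsyan}). The key observation is that a finite cycle $C$ is a sub-hypergraph of positive density, so its containment has a threshold strictly above $n^{-d}$; since $p \ll n^{-d}$ sits below this threshold, copies of $C$ almost surely fail to appear.

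First I would make the density computation explicit. Let $C$ be a finite cycle in the sense of the incidence graph, with $v = |V(C)|$ vertices and $e = |E(C)|$ edges. Since $C$ is a cycle in its bipartite incidence graph, the incidence graph has at least as many edges as vertices, which forces a relation between $v$ and $e$; concretely, each hyperedge contains $d+1$ vertices, and the cyclic structure means that the vertices and edges are "tightly" connected. The relevant quantity is the density $\rho(C) = \max\{|E(\tilde{C})|/|V(\tilde{C})| : \tilde{C} \subseteq C,\ |E(\tilde{C})| > 0\}$ appearing in Theorem~\ref{vantsyan}. The crucial claim is that $\rho(C) > 1/d$, equivalently that the threshold $n^{-1/\rho(C)}$ satisfies $n^{-1/\rho(C)} \gg n^{-d}$. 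I would verify this by checking that a cycle, being the minimal structure creating a loop in the incidence graph, has edge-to-vertex ratio strictly exceeding that of a Berge-tree: a Berge-tree on $e$ edges has exactly $ed+1$ vertices (each new edge adds $d$ fresh vertices), giving ratio $e/(ed+1) < 1/d$, whereas closing up into a cycle identifies vertices and thereby strictly increases the ratio past $1/d$.

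Given the density bound $\rho(C) > 1/d$, the argument concludes quickly. By Theorem~\ref{vantsyan}, the function $n^{-1/\rho(C)}$ is a threshold for containment of $C$, and this threshold satisfies $n^{-1/\rho(C)} \gg n^{-d} \gg p$. By the definition of a threshold, since $p \ll n^{-1/\rho(C)}$, the property of containing a copy of $C$ fails a.a.s.; that is, $G^{d+1}(n,p)$ a.a.s.\ has no copy of $C$ as a sub-hypergraph. Alternatively, one can bypass Vantsyan and argue directly via the first moment: the expected number of copies of $C$ is bounded above by a constant times $n^{v} p^{e}$, and the density inequality $e/v > 1/d$ (applied to $C$ itself, which attains the maximum when $C$ is connected and balanced) together with $p \ll n^{-d}$ gives $n^{v} p^{e} = (n \cdot p^{e/v})^{v} \to 0$, so by Markov's inequality the probability of at least one copy tends to $0$.

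The main obstacle is the density computation, specifically establishing the strict inequality $\rho(C) > 1/d$ for every finite cycle $C$. One must be careful that the maximum defining $\rho(C)$ is taken over all sub-hypergraphs, so it suffices to exhibit \emph{one} sub-hypergraph with ratio exceeding $1/d$ — and the natural candidate is $C$ itself or a minimal cycle inside it. The combinatorial heart is the claim that any hypergraph containing a cycle (in the incidence-graph sense) must have $|E| \cdot d \geq |V|$, i.e.\ that acyclicity is exactly what makes the ratio drop below $1/d$. This is essentially the statement that Berge-trees are the extremal sparse connected hypergraphs, which I would justify by an induction on the number of edges, tracking how many new vertices each edge can introduce without creating a cycle. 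Once this structural fact is in hand, the probabilistic conclusion is immediate from the stated threshold theorem.
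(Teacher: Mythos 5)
Your overall strategy is sound and is genuinely different from the paper's: the paper does not treat the fixed cycle $C$ directly but instead proves the stronger statement that $G^{d+1}(n,p)$ is a.a.s.\ \emph{entirely} acyclic (a first-moment computation summed over all cycle lengths, controlled by comparison with a geometric series), and then obtains the proposition as an immediate corollary. Your route --- compute the density of the fixed configuration $C$ and invoke Theorem~\ref{vantsyan} (or run the first moment on $C$ alone) --- proves exactly what is asked and is a perfectly legitimate, more localized argument.

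However, your key density claim is stated incorrectly, and as written one step is false. You assert $\rho(C)>1/d$ strictly, hence $n^{-1/\rho(C)}\gg n^{-d}$. This fails for the minimal (``loose'') cycles: take $t$ hyperedges $e_1,\dots,e_t$ with $e_i\cap e_{i+1}$ a single vertex and all other vertices private; this has $t$ edges and exactly $td$ vertices, so its density is exactly $1/d$, and its containment threshold is exactly $n^{-d}$, not something $\gg n^{-d}$. (For $d=1$ this is just the graph cycle $C_t$ with threshold $n^{-1}$.) Your heuristic ``closing up into a cycle pushes the ratio past $1/d$'' is off by one: identifying a single pair of vertices of a Berge-tree raises the ratio from $e/(ed+1)$ \emph{to} $1/d$, not past it; only a second identification exceeds $1/d$. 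The correct statement is $\rho(C)\geq 1/d$, i.e.\ $v\leq ed$ for any connected hypergraph containing a cycle, and this non-strict inequality is all you need: from $p\ll n^{-d}$ and $1/\rho(C)\leq d$ you still get $p\ll n^{-1/\rho(C)}$, so the zero-statement of Theorem~\ref{vantsyan} applies; equivalently, in your first-moment variant, $n^{v}p^{e}\leq n^{ed}p^{e}=(n^{d}p)^{e}\to 0$. With the strict inequality replaced by the correct non-strict one (and the ``$\gg$'' replaced by ``$\ll$ the threshold''), your proof is complete.
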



       

This proposition is a direct corollary of the following. On the other hand, note that the converse implication is not obvious.

\begin{Prop}

 If $p(n)=(n^d\cdot h(n))^{-1}$, where $h(n)\to\infty$,
  then a.a.s. $G^{d+1}(n,p)$ is acyclic.

\end{Prop}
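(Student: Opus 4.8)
The plan is to bound the expected total number of cycles by a first-moment (Markov) argument and show it tends to $0$. I would write $X=\sum_{k\ge 2}X_k$, where $X_k$ counts the Berge cycles of $G^{d+1}(n,p)$ using exactly $k$ edges; these are precisely the $2k$-cycles of the incidence graph $G(H)$, and since that graph is bipartite the shortest cycle has $k=2$ (two vertices shared by two edges). The event $\{X\ge 1\}$ is exactly the event that $G^{d+1}(n,p)$ has a cycle, so Markov's inequality gives $\mathbb{P}_n[\exists\text{ cycle}]\le \mathbb{E}[X]$, and it suffices to prove $\mathbb{E}[X]\to 0$.

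To estimate $\mathbb{E}[X_k]$, I would describe a cycle with $k$ edges by a cyclic sequence of distinct vertices $v_1,\dots,v_k$ together with distinct edges $e_1,\dots,e_k$ satisfying $v_i,v_{i+1}\in e_i$ (indices mod $k$). There are at most $n^k/(2k)$ cyclic arrangements of the $v_i$ (dividing the $n^k$ ordered tuples by the $2k$ rotations and reflections of a cycle), and for each $i$ the edge $e_i$ may be any $(d+1)$-set containing $\{v_i,v_{i+1}\}$, giving $\binom{n-2}{d-1}$ choices; the $d-1$ extra vertices of each edge are thereby accounted for. Each such configuration has all $k$ of its edges present with probability $p^k$ by independence. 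Hence
$$\mathbb{E}[X_k]\le \frac{n^k}{2k}\binom{n-2}{d-1}^k p^k\le \frac{1}{2k}\,(n^d p)^k.$$
The key simplification is that $n^d p=n^d\,(n^d h(n))^{-1}=h(n)^{-1}$, so that $\mathbb{E}[X_k]\le \tfrac{1}{2k}\,h(n)^{-k}$.

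Summing over all admissible lengths $2\le k\le n$ and extending the range to infinity, for $n$ large enough that $h(n)\ge 2$ I would bound
$$\mathbb{E}[X]=\sum_{k\ge 2}\mathbb{E}[X_k]\le \frac{1}{2}\sum_{k\ge 2}h(n)^{-k}=\frac{1}{2\,h(n)\,(h(n)-1)}\longrightarrow 0$$
as $n\to\infty$, since $h(n)\to\infty$. Combined with Markov's inequality, this proves that $G^{d+1}(n,p)$ is a.a.s.\ acyclic. The only point requiring genuine care is that the bound on $\mathbb{E}[X_k]$ be uniform enough in $k$ for the sum over \emph{all} cycle lengths (not just a single fixed length) to be controlled; the factorization $n^d p=h(n)^{-1}$ is exactly what makes each term decay geometrically, so the tail from long cycles is negligible and the whole series vanishes in the limit. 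I expect this uniform control of arbitrarily long cycles to be the main — though ultimately mild — obstacle.
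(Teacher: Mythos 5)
Your proposal is correct and follows essentially the same route as the paper: a first-moment bound on the number of Berge cycles of each length, with the identity $n^d p = h(n)^{-1}$ producing a geometric decay that makes the sum over all cycle lengths $o(1)$. The only difference is that you carry out the counting for general $d$ (via the incidence-graph description and the factor $\binom{n-2}{d-1}^k$), whereas the paper writes the computation only for $d=1$ and asserts the general case is analogous; your version fills in that omission.
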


\begin{proof}

We give a proof for the particular case $d=1$ of graphs. The argument for general $d$ is similar, with more cumbersome notation. 
Let $C_t$ be the number of cycles of size $t$ and $C_{x,y}=\sum_{x<t\leq y}C_t$ be the number of cycles of size greater than $x$ and at most $y$. We prove that
 $\mathbb{E}[C_{2,\infty}]=o(1)$.
 Indeed, we have
        $$\mathbb{E}[C_t]=\frac{1}{2}\cdot{n\choose t} p^t (t-1)!\leq(h(n)^t\cdot t)^{-1},$$ a decreasing function of $t$.
        Hence, summing over $t$, with $l<t\leq 2l$,
       $$\mathbb{E}[C_{l,2l}]\leq\frac{l}{\left(h(n)\right)^l\cdot l}=h(n)^{-l}$$        
   and, therefore, comparing with a geometric progression, 
       $$\mathbb{E}[C_{2,\infty}]=\sum_{k=1}^{\infty}\mathbb{E}\left[C_{2^{k},2^{k+1}}\right]
       =\sum_{k=1}^{\infty}\left(h(n)\right)^{-2^k}=\frac{h(n)^{-2}}{1-h(n)^{-2}}=o(1).$$

\end{proof}

In view of the above, as far as all our present discussions are concerned, the hypergraphs we deal with are disjoint unions of Berge-trees. Getting more precise information on the statistics of the number of connected components that are finite Berge-trees of a given order is the most important piece of information to getting zero-one and convergence laws for $p\ll n^{-d}$.

To this end, we define the following random variables. Below $\tau\in\textbf{T}$, where $\textbf{T}$ is the set of all isomorphism classes of Berge-trees of order $l$ on $v=1+ld$ labelled vertices.

\begin{Def}

 $A^\tau(l)$ is the number of finite Berge-trees of order $l$ and isomorphism class $\tau$ in $G^{d+1}(n,p)$.

\end{Def}

A Berge-tree of order $l$ is, in particular, a hypergraph on $v=1+ld$ vertices. Let $c^\tau(l)$ be the number of Berge-trees of order $l$ and isomorphism class $\tau$ on $v=1+ld$ labelled vertices. To each set $S$ of $v$ vertices on $G^{d+1}(n,p)$ there corresponds 
 the collection of indicator random variables $X_S^1,X_S^2,\ldots,X_S^{c^\tau(l)}$, each indicating that one of the potential $c^\tau(l)$ Berge-trees of order $l$ and isomorphism class $\tau$ in $S$ is present and is a component. Therefore one has

             $$A^{\tau}(l)=\sum_{S,i}X_S^i,$$
where $S$ ranges over all $v$-sets and $i$ ranges over $\{1,2,\ldots,c^{\tau}(l)\}$.

Note that a connected component isomorphic to a Berge-tree of a certain isomorphism class is, in particular, an induced copy of that Berge-tree.
Next we show that a local threshold for containment of a Berge-tree of given order as a connected component is the same for containing Berge-trees of that order as sub-hypergraphs, not necessarily induced.

In the next proposition, the reader may find the condition 
$$p\leq C(\log n)n^{-d}$$
  in $2$ rather strange, since it mentions functions outside the scope $p\ll n^{-d}$ of the present section. The option to putting this more general proposition here 
reflects the convenience that it has exactly the same proof and that the full condition will be used later.

\begin{Prop} \label{threshold}

Set $v=1+ld$. The function $n^{-\frac{v}{l}}$ is a local threshold for containment of Berge-trees of order $l$ as components. More precisely:

      \begin{enumerate}

   \item If $p\ll n^{-\frac{v}{l}}$ then a.a.s. $G^{d+1}(n,p)$ has no Berge-trees of 
order $l$ as sub-hypergraphs. 

   \item If $n^{-\frac{v}{l}}\ll p\leq C(\log n)n^{-d}$ where $C<\frac{d!}{1+ld}$ then, for any $k\in\mathbb{N}$ and $\tau\in\textbf{T}$, a.a.s. $G^{d+1}(n,p)$ has at least $k$ connected components isomorphic the Berge-tree of order $l$ and isomorphism class $\tau$.

       \end{enumerate}

\end{Prop}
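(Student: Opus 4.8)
For the first claim I would argue by a first moment. Fix an isomorphism type $\tau\in\mathbf{T}$; the expected number of copies of a type-$\tau$ Berge-tree as a (not necessarily induced) sub-hypergraph is $\binom{n}{v}c^\tau(l)p^l$, since such a copy is determined by a $v$-set together with one of the $c^\tau(l)$ labellings, and only the $l$ tree edges are required to be present. As $v-ld=1$, this quantity is $O(n^v p^l)=O((n^{v/l}p)^l)$, which tends to $0$ whenever $p\ll n^{-v/l}$. Markov's inequality then makes a copy of type $\tau$ a.a.s.\ absent, and a union bound over the finitely many types in $\mathbf{T}$ finishes the claim. (Equivalently, every sub-Berge-tree of order $l'$ has density $l'/(1+l'd)$, which increases in $l'$, so the densest sub-hypergraph is the whole tree, with $\rho=l/v$, and Theorem~\ref{vantsyan} applies directly with threshold $n^{-1/\rho}=n^{-v/l}$.)

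For the second claim I would use the random variables $A^\tau(l)=\sum_{S,i}X_S^i$ introduced above and run a second-moment argument. First I would compute $\mathbb{P}[X_S^i=1]=p^l(1-p)^{m(S)-l}$, where $m(S)=\binom{n}{d+1}-\binom{n-v}{d+1}$ is the number of potential edges meeting $S$: the factor $p^l$ places the $l$ tree edges, while $(1-p)^{m(S)-l}$ forces the $v$-set $S$ to be isolated from the rest of the hypergraph. Using $m(S)=\frac{v}{d!}n^d(1+o(1))$ and $\binom{n}{v}c^\tau(l)\sim\frac{c^\tau(l)}{v!}n^v$ gives $\lambda:=\mathbb{E}[A^\tau(l)]=\frac{c^\tau(l)}{v!}\,n^v p^l\exp(-\tfrac{v}{d!}n^dp\,(1+o(1)))$.

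The crux of the lower range is to show $\lambda\to\infty$. Writing $q=n^dp$ and using $v-ld=1$, one has $\log\lambda=\log n+l\log q-\frac{v}{d!}q+O(1)$. Since $p$ is an $L$-function, $q$ is eventually monotone with a limit in $[0,+\infty]$, and I would split into cases: if $q\to 0$ the expression is dominated by $\log n+l\log q=l\log(n^{1/l}q)\to+\infty$, because $p\gg n^{-v/l}$ is exactly $q\gg n^{-1/l}$; if $q\to L\in(0,\infty)$ the term $\log n$ dominates; and if $q\to\infty$ with $q\le C\log n$ then $\frac{v}{d!}q\le\frac{vC}{d!}\log n$ with $\frac{vC}{d!}<1$, so $\log\lambda\ge(1-\frac{vC}{d!})\log n\to+\infty$. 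This last case is exactly where the hypothesis $C<\frac{d!}{1+ld}=\frac{d!}{v}$ is indispensable.

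To upgrade $\lambda\to\infty$ into $A^\tau(l)\ge k$ a.a.s.\ I would bound the second moment. Two distinct indicators $X_S^i$ and $X_{S'}^{i'}$ with $S\cap S'\neq\emptyset$ cannot both equal $1$, since distinct components are vertex-disjoint, so only the diagonal and the disjoint pairs contribute. For disjoint $S,S'$ the two events depend on one another only through the $R=O(n^{d-1})$ potential edges meeting both sets, whence $\mathbb{P}[X_S^i=1,X_{S'}^{i'}=1]=\mathbb{P}[X_S^i=1]\,\mathbb{P}[X_{S'}^{i'}=1]\,(1-p)^{-R}$; because $p\le C(\log n)n^{-d}$ forces $pR=O((\log n)/n)\to 0$, the correction $(1-p)^{-R}$ is $1+o(1)$. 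Hence $\mathbb{E}[A^\tau(l)^2]\le\lambda+(1+o(1))\lambda^2$, so $\mathrm{Var}(A^\tau(l))=o(\lambda^2)$ and Chebyshev gives $A^\tau(l)\to\infty$ in probability, in particular $A^\tau(l)\ge k$ a.a.s. The main obstacle I anticipate is the asymptotics of the isolation factor $(1-p)^{m(S)}$ and the verification that $\lambda\to\infty$ uniformly across the whole window --- especially at the upper boundary $p\sim C(\log n)n^{-d}$, where the polynomial growth $n^vp^l\approx n(\log n)^l$ races the exponential decay $\exp(-\frac{v}{d!}n^dp)=n^{-vC/d!}$ and wins precisely when $C<d!/v$; the combinatorial bookkeeping in the second moment is then routine once $pn^{d-1}\to 0$.
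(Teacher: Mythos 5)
Your argument is correct and is exactly the route the paper takes: it does not print a proof of this proposition but states that it is ``a direct application of the first and second moment methods'' and defers the details to \cite{tese}, and your first-moment bound for (a) and second-moment/Chebyshev computation for (b) --- including the isolation factor $(1-p)^{m(S)-l}$ with $m(S)\sim\frac{v}{d!}n^d$, the case analysis showing $\lambda\to\infty$ under $C<\frac{d!}{v}$, and the observation that overlapping indicators are mutually exclusive because components are vertex-disjoint --- supply precisely those details. No gaps.
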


We refer the reader to \cite{tese} for the proof of the above proposition, which is a direct application of the first and second moment methods.
The next proposition proves parts $(i) (a)$ and $(i) (b)$ of Theorem \ref{grandeteorema}.

\begin{Prop}

If  $0\le p\ll n^{-(d+1)}$ or there is $l\in\mathbb{N}$ such that      $n^{-\frac{1+ld}{l}}\ll p\ll n^{-\frac{1+(l+1)d}{l+1}}$ then p is a zero-one law.

\end{Prop}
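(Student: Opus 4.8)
The plan is to reduce both cases to $\aleph_0$-categoricity and then invoke the criterion established above that an $\aleph_0$-categorical almost sure theory yields a zero-one law. The key in each range is to isolate a short list of first-order sentences lying in $\Theta_p$ that together determine a countable model up to isomorphism.

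First I would dispose of range $(a)$. A single edge is an order-$1$ Berge-tree on $v=1+d$ vertices, whose threshold is $n^{-(d+1)}$; so by the first part of Proposition \ref{threshold} with $l=1$, for $p\ll n^{-(d+1)}$ the hypergraph $G^{d+1}(n,p)$ a.a.s.\ contains no edge at all. Combined with the standard first/second-moment estimate that the number of isolated vertices tends to infinity a.a.s.\ (here $p\ll n^{-d}$ forces $(1-p)^{\binom{n-1}{d}}\to 1$, so the expected count is $\sim n$), this shows $\Theta_p$ contains, for every $k$, the sentence ``there are at least $k$ vertices and no edges''. Any countable model is then a countably infinite edgeless hypergraph, unique up to isomorphism, so $\Theta_p$ is $\aleph_0$-categorical.

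For range $(b)$, write $v_m=1+md$ so that the threshold for order-$m$ Berge-trees is $n^{-v_m/m}=n^{-(d+1/m)}$, increasing in $m$. The hypotheses place $p$ strictly between the thresholds of orders $l$ and $l+1$. I would extract from $\Theta_p$ three families of sentences: (1) acyclicity, i.e.\ ``no copy of $C$'' for every finite Berge-cycle $C$, which holds a.a.s.\ because $p\ll n^{-d}$ (the acyclicity proposition); (2) for each $m\in\{0,1,\ldots,l\}$ and each isomorphism class $\tau$ of order-$m$ Berge-trees, the sentence ``there are at least $k$ components isomorphic to $\tau$'' for every $k$, which holds a.a.s.\ by the second part of Proposition \ref{threshold} for $1\le m\le l$ (since $n^{-(d+1/m)}\le n^{-(d+1/l)}\ll p\ll(\log n)n^{-d}$) and by the isolated-vertex estimate for $m=0$; and (3) ``no order-$(l+1)$ Berge-tree occurs as a sub-hypergraph'', which holds a.a.s.\ by the first part of Proposition \ref{threshold} with $l+1$ in place of $l$, as $p\ll n^{-v_{l+1}/(l+1)}$.

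It then remains to verify that these sentences force a unique countable model. Any model of family (1) is Berge-acyclic, so each connected component is a Berge-tree; family (3) forbids any connected subconfiguration of $l+1$ edges, ruling out every component of order $\ge l+1$, including infinite ones, since an infinite Berge-tree contains arbitrarily large finite sub-Berge-trees. Hence every component is a finite Berge-tree of order at most $l$, of which there are only finitely many isomorphism types. Family (2) then forces, inside a countable model, exactly $\aleph_0$ components of each type $(m,\tau)$ with $m\le l$ (``at least $k$ for all $k$'' together with countability). Consequently every countable model is isomorphic to the disjoint union of $\aleph_0$ copies of each order-$\le l$ Berge-tree, so $\Theta_p$ is $\aleph_0$-categorical and $p$ is a zero-one law. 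The main obstacle is precisely this uniqueness step: one must confirm that families (1) and (3) jointly exclude infinite components and that the multiplicity ``$\aleph_0$'' is genuinely forced rather than merely bounded below, both of which depend on reasoning inside a countable model.
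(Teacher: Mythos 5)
Your proof is correct and follows essentially the same route as the paper: the paper's own argument is a one-line appeal to Proposition \ref{threshold} to conclude that $\Theta_p$ is $\aleph_0$-categorical, which is exactly the reduction you carry out (you simply make explicit the acyclicity, the exclusion of order-$(l+1)$ Berge-trees, and the forcing of $\aleph_0$ copies of each smaller type, matching the axiomatization $\Theta_l$ the paper records right after its proof). No discrepancy to report.
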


\begin{proof}

Note that, by the above proposition, all the countable models of the almost sure theory $\Theta_p$ are isomorphic, i.e., 
$\Theta_p$ is $\aleph_0$-categorical.
\end{proof}

Hence, for the edge probabilities considered here, $\mathcal{K}(\Theta_p)$ is simply the one-point space $\{\Theta_p\}$.

Let $\Theta_l$ be the first order theory consisting of a scheme of axioms excluding the existence of
cycles and Berge-trees of order $\ge l+1$ and a scheme that assures the existence of infinite copies
of each type of Berge-trees of order $\le l$. Then $\Theta_l$ is an axiomatization for $\Theta_p$, where $p$ is
as above.

         \subsection{Just Before the Double Jump}

Consider now an edge function $p$ such that  for all $\epsilon >0$, $n^{-(d+\epsilon)}\ll p\ll n^{-d}$. Such functions would include, for instance, $p(n)=(\log n)^{-1}n^{-d}$.
The countable models of the almost sure theories for such $p$'s must be acyclic and have infinite components isomorphic to Berge-trees of all orders. But in this range a
new possibility can occur: the existence of components that are Berge-trees of infinite order. There may or there may not be such components, and therefore
the countable models of $\Theta_p$ are not $\aleph_0$-categorical.

These infinite components do not matter from a first-order perspective, as they will be  ``simulated" by sufficiently large finite components.
Because first-order properties are represented by finite formulae, with finitely many quantifications, this will establish that all models of $\Theta_p$ are elementarily equivalent
in spite of not being $\aleph_0$-categorical.

     \subsubsection{Rooted Berge-Trees}  \label{rooted Berge-trees}

The following two results are stated and proved in Spencer's book \cite{spencer} for trees, which are
particular cases of Berge-trees when $d=1$. The situation is similar to that of section \ref{game}: similar arguments apply to the other values of $d$.





\begin{Prop}

Let $H_1$ and $H_2$ be two acyclic graphs in which every finite Berge-tree occurs as a component an infinite number of times. Then $H_1$ and $H_2$ are elementarily equivalent.

\end{Prop}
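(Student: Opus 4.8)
The plan is to prove this using the Ehrenfeucht-Fra\"iss\'e game characterization of elementary equivalence (Proposition \ref{winning strategy}). By that result, it suffices to exhibit, for every $k\in\mathbb{N}$, a winning strategy for Duplicator in the game $\ehf(H_1,H_2;k)$. The key structural fact I would exploit is the hypothesis: every finite Berge-tree occurs as a component infinitely often in \emph{both} $H_1$ and $H_2$. This symmetry means the two graphs look the same ``locally up to any bounded radius'', and a $k$-round game can only probe the structure out to a bounded distance from the $\le k$ chosen vertices.

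First I would set up the relevant notion of local neighborhood. For a vertex $x$ and a radius $r$, let the $r$-ball around $x$ be the sub-hypergraph induced by all vertices at incidence-graph distance $\le r$ from $x$. Since both graphs are acyclic (disjoint unions of Berge-trees), each such ball is itself a finite rooted Berge-tree, and its isomorphism type is determined by the component containing $x$ together with the position of $x$ in it. The core claim I would establish is the standard one: Duplicator can win the $k$-round game provided she maintains, after round $j$, a partial isomorphism between the radius-$r_j$ neighborhoods of the chosen vertices, where $r_j = 3^{k-j}$ (any exponentially decaying radius schedule works — the point is that each further round needs to match neighborhoods only half as large, so there is room to respond). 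Concretely, after Spoiler plays $x_{j+1}$, Duplicator must find $y_{j+1}$ whose radius-$r_{j+1}$ neighborhood, together with its distances to the previously chosen $y_1,\ldots,y_j$, matches that of $x_{j+1}$ relative to $x_1,\ldots,x_j$.

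The main work is the case analysis on whether Spoiler's new vertex is \emph{close} to a previously selected vertex or \emph{far} from all of them. If $x_{j+1}$ lies within distance $2r_j$ of some $x_i$, then it lives in an already-partially-matched component, and Duplicator plays the corresponding vertex under the existing partial isomorphism, which extends because $r_{j+1} < r_j$ leaves slack. If $x_{j+1}$ is far from all previous vertices, its radius-$r_{j+1}$ neighborhood is an isolated piece of a Berge-tree of some bounded finite type $\tau$; here I invoke the hypothesis crucially, since the type $\tau$ occurs infinitely often in $H_2$ as well, Duplicator can pick a fresh, far-away occurrence of $\tau$ in $H_2$ and play the matching vertex. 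Infinitude guarantees she never runs out, even after $k$ rounds consume finitely many components. Whether Spoiler plays in $H_1$ or $H_2$ is symmetric, so the same argument covers both directions.

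The hard part, and the step deserving care, is verifying that the partial-isomorphism invariant genuinely persists across the ``far'' case: one must check that playing a fresh copy of $\tau$ does not accidentally create a spurious edge or coincidence with the previously matched vertices. This is where acyclicity and the distance bookkeeping pay off — because everything is Berge-acyclic and the new neighborhood is separated from the old ones by distance exceeding the sum of the radii, no edge of $H_i$ can straddle the new piece and an old one, so the extended map remains a partial isomorphism on the relevant balls. At the end of round $k$ the maintained invariant holds at radius $r_k \ge 1$, which is more than enough to guarantee that $\{x_{i_0},\ldots,x_{i_d}\}$ is an edge in $H_1$ if and only if $\{y_{i_0},\ldots,y_{i_d}\}$ is an edge in $H_2$, so Duplicator wins. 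Since $k$ was arbitrary, $H_1$ and $H_2$ are elementarily equivalent.
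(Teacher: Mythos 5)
Your overall strategy---Ehrenfeucht--Fra\"iss\'e games with an exponentially shrinking radius schedule and a close/far case analysis---is the right one, and it is essentially the argument the paper relies on by citation to Spencer's book. But there is a genuine gap at the heart of your ``far'' case: you assert that the radius-$r_{j+1}$ ball around a newly played vertex ``is itself a finite rooted Berge-tree'' of some type $\tau$ that, by hypothesis, occurs infinitely often in the other graph. Neither half of this is justified. Acyclicity does not imply local finiteness, so a ball may be infinite (a vertex may lie on infinitely many edges), and more importantly the hypothesis only controls the \emph{finite} Berge-trees occurring as \emph{components}: it says nothing about infinite components, and the paper stresses immediately after the statement that $H_1$ and $H_2$ ``may or may not have infinite components.'' Since every component of an acyclic hypergraph in which each finite Berge-tree already appears infinitely often can differ between $H_1$ and $H_2$ only in its infinite components, the case your argument glosses over is precisely the entire content of the proposition. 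If Spoiler plays a vertex of infinite degree in $H_1$ while $H_2$ has only finite components, no vertex of $H_2$ has an isomorphic $r$-ball, and your invariant (an exact partial isomorphism between the radius-$r_j$ neighborhoods) cannot be maintained.

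The missing idea is to weaken the invariant from exact ball isomorphism to agreement of \emph{truncated} local types: counts of sub-configurations are recorded only up to a threshold $s_j$, with everything beyond $s_j$ (including infinity) collapsed to ``many''---exactly the $(r,s)$-values and patterns the paper later formalizes in the Double Jump section. With radii $r_j$ and thresholds $s_j$ both decaying suitably with $j$, a vertex in an infinite component (or of infinite degree) is matched not by an isomorphic ball but by a vertex whose ball has the same $(r_{j+1},s_{j+1})$-value, and such vertices are supplied in abundance by sufficiently large finite Berge-tree components, which the hypothesis does guarantee. A smaller bookkeeping slip in the same spirit: your ``close'' threshold of distance $2r_j$ is too generous, since the new ball of radius $r_{j+1}$ around such a vertex need not be contained in the old ball of radius $r_j$; the threshold should be on the order of $2r_{j+1}$ so that containment holds and the restriction of the existing partial map really does provide Duplicator's answer.
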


It is convenient to emphasize that, above, $H_1$ and $H_2$ may or may not have infinite components.

The next proposition proves part $(i)(c)$
of \ref{grandeteorema}.

\begin{Prop}

Suppose $p$ is an edge function satisfying, for all $\epsilon>0$, 
                           $$n^{-(d+\epsilon)}\ll p\ll n^{-d}.$$
    Then $p$ is a zero-one law.

\end{Prop}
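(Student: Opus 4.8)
The goal is to show that for edge functions with $n^{-(d+\epsilon)}\ll p\ll n^{-d}$ (all $\epsilon>0$), the function $p$ is a zero-one law. By the Corollary characterizing zero-one laws, it suffices to prove that any two countable models $H_1,H_2$ of $\Theta_p$ are elementarily equivalent. The plan is to first pin down exactly what $\Theta_p$ forces on a countable model in this range, and then invoke the preceding proposition about acyclic graphs in which every finite Berge-tree occurs infinitely often.

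First I would establish the almost sure structural facts. Since $p\ll n^{-d}$, the proposition on acyclicity applies (writing $p=(n^d h(n))^{-1}$ with $h\to\infty$), so a.a.s.\ $G^{d+1}(n,p)$ is acyclic; hence every model of $\Theta_p$ is acyclic. Next, for each fixed order $l$ set $v=1+ld$, so the local threshold for Berge-trees of order $l$ as components is $n^{-v/l}=n^{-(d+1/l)}$. Because $p\gg n^{-(d+\epsilon)}$ for every $\epsilon>0$, in particular $p\gg n^{-(d+1/l)}$ for each fixed $l$, and $p\le C(\log n)n^{-d}$ is satisfied trivially since $p\ll n^{-d}$. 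Thus part (2) of Proposition~\ref{threshold} gives that, for every order $l$, every isomorphism class $\tau$, and every $k$, a.a.s.\ $G^{d+1}(n,p)$ has at least $k$ components isomorphic to the Berge-tree of order $l$ and class $\tau$. Since ``has at least $k$ copies of a fixed finite component'' is an elementary property for each fixed $k$, the scheme of all these sentences lies in $\Theta_p$. Consequently every model of $\Theta_p$ is acyclic and contains every finite Berge-tree as a component infinitely many times.

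Then the conclusion is immediate: given any two countable models $H_1,H_2\models\Theta_p$, both are acyclic graphs (hypergraphs) in which every finite Berge-tree occurs as a component infinitely often, so by the previous proposition $H_1$ and $H_2$ are elementarily equivalent. By the Corollary, $p$ is a zero-one law.

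The main subtlety — and the point worth stating explicitly rather than the routine threshold bookkeeping — is that $\Theta_p$ is genuinely \emph{not} $\aleph_0$-categorical here, because a countable acyclic hypergraph with all finite Berge-trees appearing infinitely often may additionally possess one or more infinite-order Berge-tree components, and these extra components can be present or absent without violating any axiom. So unlike the earlier cases, I cannot argue via isomorphism of countable models. The work is therefore delegated entirely to the cited proposition, whose content is exactly that such infinite components are invisible to first-order logic: in the Ehrenfeucht--Fra\"iss\'e game, Duplicator answers any Spoiler move into an infinite component (or into a large finite one) by moving into a sufficiently large finite component of the other model, matching the bounded local neighborhoods that a $k$-round game can inspect. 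Verifying that this matching succeeds — that finitely many rounds only ever probe a bounded radius, which large finite Berge-trees reproduce faithfully — is the real heart of the argument, but it is already supplied by the preceding proposition, so here it need only be invoked.
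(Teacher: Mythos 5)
Your proof is correct and follows essentially the same route as the paper: the countable models of $\Theta_p$ are acyclic and contain every finite Berge-tree as a component infinitely often, so the preceding proposition on such acyclic hypergraphs yields elementary equivalence of all countable models, hence a zero-one law. Your explicit bookkeeping with the thresholds $n^{-(d+1/l)}$ and your remark on the failure of $\aleph_0$-categoricity due to possible infinite Berge-tree components simply spell out what the paper leaves implicit.
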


\begin{proof}

Consider an edge function $p$ such that $n^{-(d+\epsilon)}\ll p\ll n^{-d}$ for all $\epsilon>0$. All countable models of $\Theta_p$ satisfy the hypotheses of the above proposition. 
Therefore they are elementarily equivalent and these $p$'s are zero-one laws.
\end{proof}
So, here again, $\mathcal{K}(\Theta_p)=\{\Theta_p\}$.

Let $\Theta$ be the first order theory consisting of a scheme of axioms excluding the existence of cycles and a scheme that assures that every finite Berge-tree of any order appears as a component
an infinite number of times. Then $\Theta$ is an axiomatization for $\Theta_p$. 

          \subsection{On the Thresholds}

So far, we have seen that if $p$ satisfies one of the following conditions

\begin{enumerate}

   \item $0\le p\le n^{-(d+1)}$
   \item $n^{-\frac{1+ld}{l}}\ll p\ll n^{-\frac{1+(l+1)d}{l+1}}$, for some $l\in\mathbb{N}$
   \item $n^{-(d+\epsilon)}\ll p\ll n^{-d}$ for all $\epsilon>0$

\end{enumerate}
then $p$ is a zero-one law.

An $L$-function $p$ in the range $0\le p\ll n^{-d}$ that violates all the above three conditions must satisfy, for some $l\in\mathbb{N}$ and $c\in(0,+\infty)$, the condition
$p(n)\sim c\cdot n^{-\frac{1+ld}{l}}.$
In that case, $p$ is not a zero-one law. Our next goal is to show that those $p$'s are still convergence laws.

     \subsubsection{Limiting Probabilities on the Thresholds} 

Let $l\in\mathbb{N}$ and let $T_1,T_2,\ldots,T_u$ denote the collection of all possible (up to isomorphism) Berge-trees of order $l$ and, for a $u$-tuple $\textbf{m}=(m_1,\ldots,m_u)\in(\mathbb{N}_s)^u$, let $\sigma_{\textbf{m}}$ be the elementary property that there are precisely $m_i$ components $T_i$ if 
$0\leq i\leq s$ and at least $s+1$ components if $m_i=\mathcal{M}$.

The proposition that follows gives part $(ii)(a)$
of \ref{grandeteorema}.

\begin{Prop}

Let $p\sim c\cdot n^{-\frac{1+ld}{l}}$. The probabilities of all elements in the collection $\{\sigma_{\textbf{m}}\mid\textbf{m}\in(\mathbb{N}_s)^u, s\in\mathbb{N}\}$ converge as $n\to\infty$. Moreover, this collection
 is the set of nodes on a weighted spanning tree for $\mathcal{K}(\Theta_p)$, the weights being the asymptotic probabilities. In particular, $p$ is a convergence law.

\end{Prop}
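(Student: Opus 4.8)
The plan is to reduce everything to a Poisson limit theorem for the component counts and then read off both the convergence of the probabilities and the tree structure. For each isomorphism type $T_i$ ($1\le i\le u$) of Berge-tree of order $l$, write $A_i:=A^{T_i}(l)$ for the number of components of $G^{d+1}(n,p)$ isomorphic to $T_i$, as in the preceding definition. By construction each sentence $\sigma_{\mathbf{m}}$ is an event depending only on the finite vector $(A_1,\dots,A_u)$: it asserts $A_i=m_i$ when $m_i\le s$ and $A_i\ge s+1$ when $m_i=\mathcal M$. Hence the whole proposition will follow once I show that $(A_1,\dots,A_u)$ converges in distribution, as $n\to\infty$, to a vector $(Z_1,\dots,Z_u)$ of independent Poisson variables with means $\lambda_i:=\lim_n\mathbb E[A_i]$. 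Granting this, $\mathbb P_n[\sigma_{\mathbf m}]\to\prod_i\mathbb P[Z_i\in I_i]$, where $I_i=\{m_i\}$ if $m_i\le s$ and $I_i=\{s+1,s+2,\dots\}$ if $m_i=\mathcal M$, since $\sigma_{\mathbf m}$ cuts out a continuity set of the limiting (discrete) law.

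First I would compute $\lambda_i$. Writing $A_i=\sum_{S,j}X^j_S$ as in the definition, each indicator requires the $l$ edges of the given copy to be present (probability $p^l$) and every other potential edge meeting the $v$-set $S$ to be absent (so that the copy is a \emph{component}). Since $n^vp^l\to c^l$ while the isolation factor is $(1-p)^{O(n^d)}=\exp(-O(pn^d))\to 1$ (here $pn^d\sim c\,n^{-1/l}\to0$), linearity of expectation gives $\mathbb E[A_i]\to\lambda_i:=\tfrac{c^l}{v!}\,c^{T_i}(l)\in(0,\infty)$. For the joint law I would run the multivariate Brun's sieve: compute the joint factorial moments $\mathbb E\big[(A_1)_{r_1}\cdots(A_u)_{r_u}\big]$ and show they converge to $\prod_i\lambda_i^{r_i}$, which is exactly the moment signature of a vector of independent Poissons and, by the method of moments for integer-valued variables, forces the claimed convergence in distribution. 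The main obstacle is precisely this moment estimate: one must check that the dominant contribution comes from families of pairwise vertex-disjoint copies (which factorize and reproduce $\prod_i\lambda_i^{r_i}$) and that configurations sharing vertices, together with the correction from the isolation constraints, are of lower order. This is the technical heart and is where the hypothesis $p\sim c\,n^{-v/l}$ enters quantitatively.

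It remains to identify the $\sigma_{\mathbf m}$ with the nodes of a spanning tree and to verify the branch condition, for which I would first describe the countable models of $\Theta_p$. Since $p\ll n^{-d}$, such a model is acyclic, hence a disjoint union of finite Berge-trees. Proposition \ref{threshold} applied to orders $l'<l$ (for which $p\gg n^{-(1+l'd)/l'}$) forces infinitely many components of every type of order $<l$, while applied to orders $l''>l$ (for which $p\ll n^{-(1+l''d)/l''}$) it forbids any Berge-tree of order $l''$ as a sub-hypergraph, ruling out all components of order $>l$ and, a fortiori, all infinite components. Thus a countable model is determined up to isomorphism by its profile $(n_1,\dots,n_u)\in(\mathbb N\cup\{\infty\})^u$ of order-$l$ component counts: two models with the same profile are isomorphic, hence elementarily equivalent, while distinct profiles are separated by sentences of the form ``there are exactly $k$'' or ``at least $k$ components of type $T_i$''. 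Consequently $\mathcal K(\Theta_p)$ is in natural bijection with $(\mathbb N\cup\{\infty\})^u$.

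Finally I would assemble the weighted tree. For each $s$ the sets $A_{\sigma_{\mathbf m}}$, $\mathbf m\in(\mathbb N_s)^u$, form a finite clopen partition of $\mathcal K(\Theta_p)$, and passing from stage $s$ to stage $s+1$ refines it, splitting each coordinate valued $\mathcal M$ (i.e.\ $\ge s+1$) into ``$=s+1$'' and the new $\mathcal M$ (i.e.\ $\ge s+2$) while leaving the finite coordinates unchanged; placing the stage-$s$ partition at height $s+1$ below the root $(\mathcal K(\Theta_p),0)$ produces a rooted tree whose nodes are exactly the $A_{\sigma_{\mathbf m}}$. A branch corresponds to a limiting profile $\mathbf n\in(\mathbb N\cup\{\infty\})^u$, and by the characterization above its intersection $\bigcap_s A_{\sigma_{\mathbf m^{(s)}}}$ is the single completion with that profile, so the branch condition holds and $\mathcal T$ is a spanning tree. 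Setting $\mu(\sigma_{\mathbf m}):=\lim_n\mathbb P_n[\sigma_{\mathbf m}]$ (which exists by the first part) gives a hereditarily consistent weight: $\mu(\mathcal K(\Theta_p))=1$, and since for every fixed $n$ the children of a node partition it, so that $\mathbb P_n[\sigma_{\mathbf m}]=\sum_{\text{children}}\mathbb P_n[\sigma_{\mathbf m'}]$, this finite sum passes to the limit. Thus $(\mathcal T,\mu)$ is a weighted spanning tree and, by the Proposition of Section \ref{completions} on convergence of node probabilities, $p$ is a convergence law, the induced measure being the product of the Poisson laws $\mathrm{Poi}(\lambda_i)$ under the identification $\mathcal K(\Theta_p)\cong(\mathbb N\cup\{\infty\})^u$.
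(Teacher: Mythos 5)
Your proposal is correct and follows essentially the same route as the paper: the paper likewise reduces to finite $\mathbf{m}$, shows $\Theta_p\cup\{\sigma_{\mathbf m}\}$ is $\aleph_0$-categorical (hence complete, giving the branch condition), and invokes asymptotic independent Poisson distributions for the component counts $A_i$ with the same means $\lambda_i=\frac{c_i}{v!}c^l$, proved by the method of factorial moments. You merely spell out the factorial-moment computation and the tree-assembly details that the paper defers to Lemma \ref{poisson1} and to \cite{tese}.
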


\begin{proof}

It is clearly enough to consider the case $\textbf{m}\in(\mathbb{N})^u$.

The countable models of $\Theta_p\cup\{\sigma_{\textbf{m}}\}$ have no cycles, a countably infinite number of components of each Berge-tree of order $\le l-1$, no sub-hypergraph isomorphic to a Berge-tree of order $\ge l+1$ and exactly
$m_i$ components $T_i$ for each $i$. So $\Theta_p\cup\{\sigma_{\textbf{m}}\}$ is $\aleph_0$-categorical and, in particular, is complete. 

Tautologically no two of the $\sigma_{\textbf{m}}$ can hold simultaneously.

For each $i\in\{1,2,\ldots,u\}$, let $\tau_i$ be the isomorphism type of $T_i$. For notational convenience, set $c_i:=c^{\tau_i}(l)$ and $A_i:=A^{\tau_i}(l)$. 
The next lemma implies the remaining properties and, therefore, completes the proof.
\end{proof}

\begin{Lemma}  \label{poisson1}

In the conditions of the proof of the above proposition, the random variables $A_1,A_2,\ldots,A_u$ are asymptotically independent Poisson with means $\lambda_1=\frac{c_1}{v!}c^l,\lambda_2=\frac{c_2}{v!}c^l,\ldots,\lambda_u=\frac{c_u}{v!}c^l$. That is to say,

       $$p_{\textbf{m}}:=\lim_{n\to\infty}\mathbb{P}_n[\sigma_{\textbf{m}}]=\prod^u_{i=1}e^{-\lambda_i}\frac{\lambda_i^{m_i}}{m_i!}.$$
In particular  $$\sum_{\textbf{m}\in I}p_{\textbf{m}}=1.$$

\end{Lemma}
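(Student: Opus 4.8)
The plan is to establish a Poisson limit law for each $A^{\tau_i}(l)$ and then upgrade to joint asymptotic independence. The standard tool here is the method of factorial moments (a.k.a. the Brun sieve or the method of moments for Poisson convergence): a sequence of nonnegative integer random variables $X_n$ converges in distribution to a Poisson with mean $\lambda$ precisely when, for every fixed $r\ge 1$, the $r$-th factorial moment $\mathbb{E}[(X_n)_r]=\mathbb{E}[X_n(X_n-1)\cdots(X_n-r+1)]$ converges to $\lambda^r$. The joint version states that $(A_1,\dots,A_u)$ converge to independent Poissons with means $\lambda_1,\dots,\lambda_u$ iff the mixed factorial moments factor asymptotically, i.e. $\mathbb{E}\left[\prod_{i=1}^u (A_i)_{r_i}\right]\to\prod_{i=1}^u\lambda_i^{r_i}$ for all fixed $(r_1,\dots,r_u)$. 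So the whole proof reduces to computing these mixed factorial moments.

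First I would recall the representation $A_i=\sum_{S,j}X_S^j$ from the setup, where $S$ ranges over $v$-sets of vertices and $j$ over the $c_i$ potential placements of a $T_i$-component in $S$. The factorial moment $\mathbb{E}[(A_i)_{r_i}]$ counts ordered $r_i$-tuples of distinct indicator variables, so $\mathbb{E}\left[\prod_i (A_i)_{r_i}\right]$ is a sum, over all ways of choosing $r_i$ pairwise-disjoint (as vertex sets) labelled copies of $T_i$ for each $i$, of the probability that all these copies are simultaneously present \emph{and are components}. The key combinatorial point is that the dominant contribution comes from configurations where all the chosen copies occupy \emph{disjoint} vertex sets: overlapping configurations contribute a negligible lower-order term because they force extra coincidences among the $\binom{n}{v}$-many $v$-sets. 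For a single disjoint copy of $T_i$, the probability of being present is $p^l$ (there are $l$ edges), and the expected count of such components is, to leading order, $\binom{n}{v}c_i\, p^l(1-p)^{O(n^d)}$; with $p\sim c\,n^{-v/l}$ one has $\binom{n}{v}\sim n^v/v!$ and $p^l\sim c^l n^{-v}$, while the "is a component" correction factor $(1-p)^{\#\text{edges meeting }S}\to 1$ since the number of such potential edges is $O(n^d)$ and $p\cdot n^d\to 0$. This yields $\mathbb{E}[A_i]\to \frac{c_i}{v!}c^l=\lambda_i$, matching the stated mean.

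For the general mixed moment, I would argue that choosing $r:=\sum_i r_i$ disjoint copies multiplies these leading contributions: the count of ordered disjoint placements is asymptotically $\prod_i\left(\binom{n}{v}c_i\right)^{r_i}$ divided by nothing (disjointness costs only a $(1-o(1))$ factor since $rv$ is fixed), and the joint presence-and-component probability factors as $\prod_i (p^l)^{r_i}\cdot(1-o(1))$, the component-condition exclusions again being asymptotically harmless because only $O(n^d)$ edges are excluded per copy. Multiplying through gives $\mathbb{E}\left[\prod_i (A_i)_{r_i}\right]\to\prod_i\lambda_i^{r_i}$, exactly the product form required for joint Poisson convergence with independence. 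The displayed product formula for $p_{\textbf{m}}$ follows immediately, and $\sum_{\textbf{m}\in I}p_{\textbf{m}}=1$ holds because the limiting joint law is an honest probability distribution on $(\mathbb{N})^u$ (a product of Poissons).

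**The main obstacle** will be bookkeeping the "is a component" condition uniformly across the mixed moments — one must verify that requiring each chosen copy to be a genuine connected component (not merely present as a sub-hypergraph) perturbs the probabilities only by a $1+o(1)$ factor, and that the \emph{overlapping} placements truly contribute $o(1)$ to every fixed factorial moment. Both rest on the same estimate: the number of potential edges incident to any fixed $v$-set but not contained in it is $O(n^d)$, and $p\cdot n^d=o(1)$ in this range, so $(1-p)^{O(n^d)}=1-o(1)$; meanwhile overlapping configurations lose at least one free $v$-set, costing a factor $O(n^{-v})\cdot O(n^{v}\,p^l)=o(1)$ relative to the disjoint main term. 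I would isolate this as the one genuine computation, citing the first- and second-moment estimates underlying Proposition~\ref{threshold} (proved in \cite{tese}) to keep the argument self-contained, and otherwise invoke the method of factorial moments as a black box for Poisson convergence.
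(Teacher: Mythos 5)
Your proposal is correct and follows essentially the same route the paper takes: the paper defers this proof to \cite{tese}, but its in-text proof of the analogous Lemma \ref{poisson2} is exactly your argument — the method of factorial moments applied to the indicator decomposition $A_i=\sum_{S,j}X_S^j$, with the mixed moment split into disjoint tuples (giving $\prod_i\lambda_i^{r_i}$) and overlapping tuples (giving $o(1)$). A minor remark: since your indicators encode ``present \emph{and} is a component,'' two overlapping indicators are in fact incompatible (distinct components are vertex-disjoint), so the overlapping sum vanishes identically and your $o(1)$ bound holds a fortiori.
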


Again, we refer the reader to \cite{tese} for a proof.

So, for $p\sim c\cdot n^{-\frac{1+ld}{l}}$, the collection $\{\sigma_{\textbf{m}}\mid\textbf{m}\in(\mathbb{N}_s)^u, s\in\mathbb{N}\}$ can be organized as the nodes of a weighted spanning tree for $\mathcal{K}(\Theta_p)$.
Here, all the corresponding $\mathcal{K}(\Theta_p)$ are countable. If there is only one possible isomorphism type of
$l$-Berge-trees (for example if $d=l=1$ or $l=1$ and $d$ is arbitrary), then $\mathcal{K}(\Theta_p)$ has exactly one 
limit point, corresponding to having an infinite number of copies of Berge-trees of that type.

  \begin{figure}[!htb]
    \centering
    \includegraphics[height=20mm,width=40mm,angle=0]{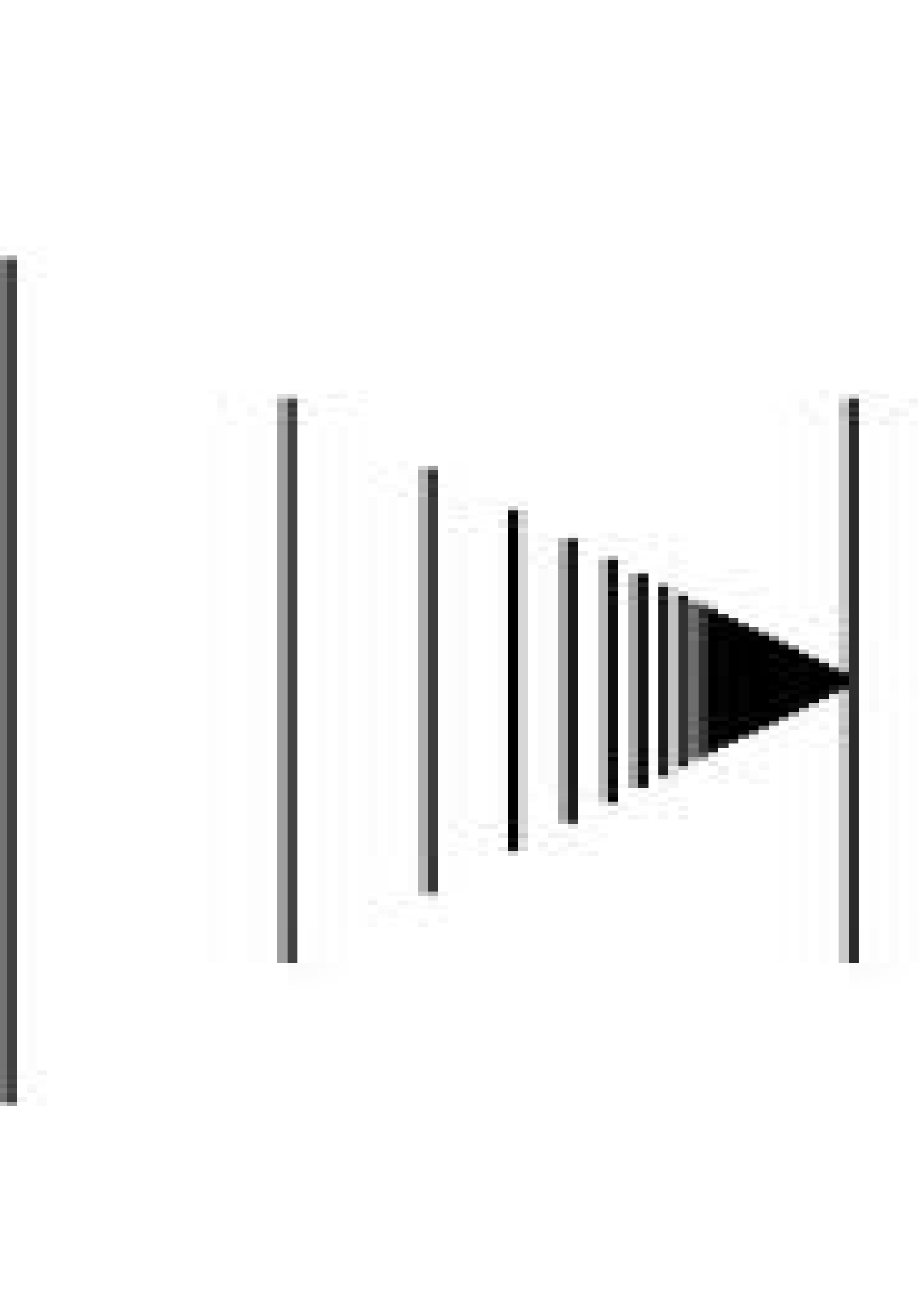}
    \caption{$\mathcal{K}(\Theta)=\omega+1$}
    \label{figRotulo}
  \end{figure}

If there are at least two isomorphism types, then there is a countably infinite number of limit points.
Each one
 corresponds to specifying finite quantities for the various isomorphism types of $l$-Berge-trees, except for one, whose copies are insisted 
to appear an infinite number of times.

  \begin{figure}[!htb]
    \centering
    \includegraphics[height=20mm,width=80mm,angle=0]{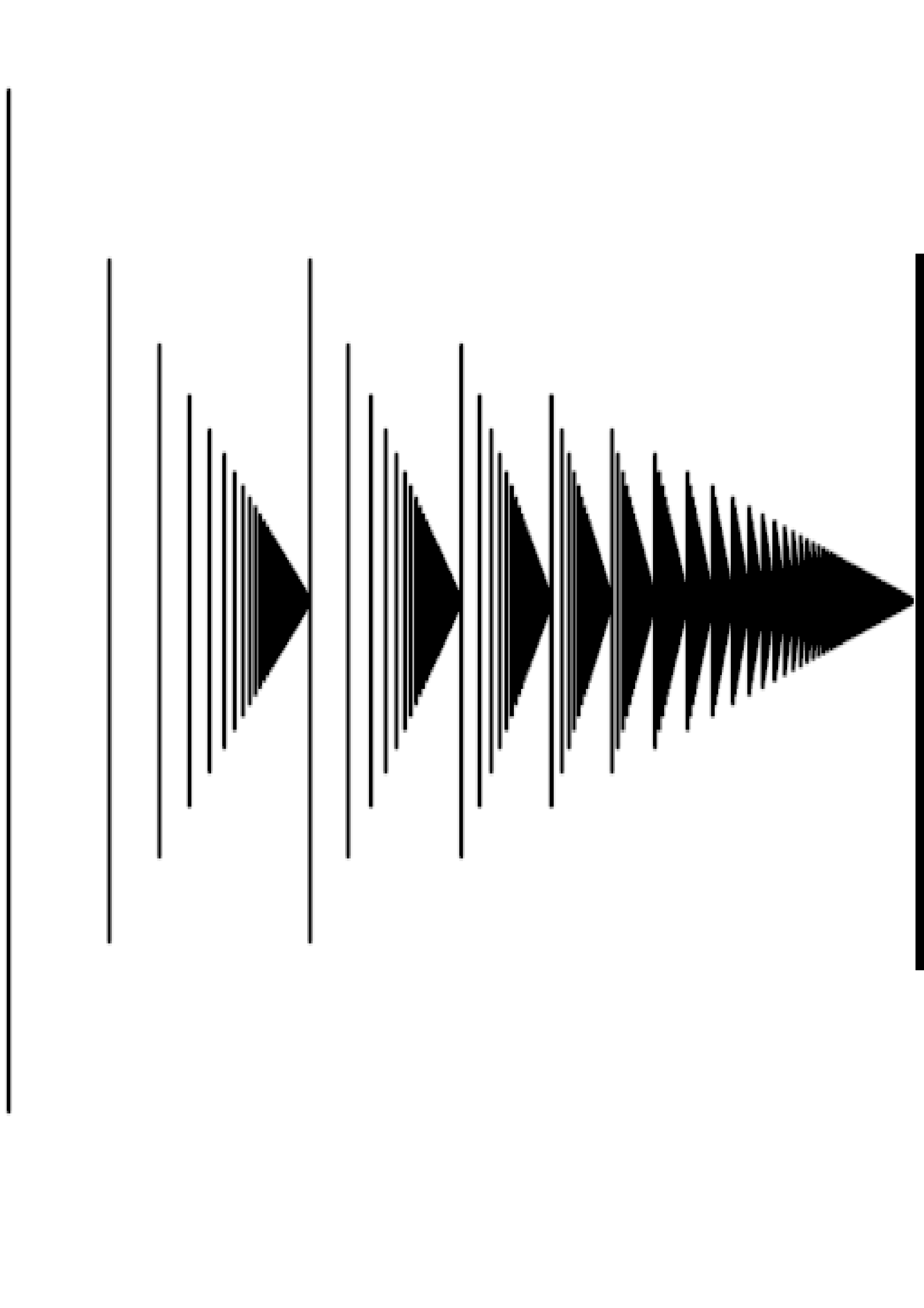}
    \caption{$\mathcal{K}(\Theta)=\omega^2+1$}
    \label{figRotulo}
  \end{figure}

It is worth noting that if $a_i$ is the number of automorphisms of the Berge-tree whose isomorphism type is $\tau_i$ then one has $\frac{c_i}{v!}=\frac{1}{a_i}$.

The convergence laws we got so far provide a nice description of the component structure in the early history of $G^{d+1}(n,p)$: it begins empty, then isolated edges appear, then all Berge-trees of order two, then all of order three, and so on untill $\ll n^{-d}$, immediately before the double jump takes place.

In what follows, $\bb$ stands for ``Big-Bang".

\begin{Def}

 $\bb$ is the set of all $L$-functions $p:\mathbb{N}\to[0,1]$ satisfying $0\le p\ll n^{-d} $.

\end{Def}

Now it is just a matter of putting pieces together to get the following theorem.

\begin{Thrm}

All elements of $\bb$ are convergence laws.

\end{Thrm}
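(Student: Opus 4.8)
The plan is to deduce the theorem from the propositions already proved in this section, once one verifies that the four conditions treated so far exhaust the range $0\le p\ll n^{-d}$ for $L$-functions. Recall first that every zero-one law is, \emph{a fortiori}, a convergence law, since then each limit $\lim_{n}\mathbb{P}_n[A_\phi]$ exists (and equals $0$ or $1$). Hence it suffices to show that every $p\in\bb$ satisfies one of conditions $(i)(a)$, $(i)(b)$, $(i)(c)$ or $(ii)(a)$ of Theorem \ref{grandeteorema}, each of which has been shown to yield a convergence law.

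First I would fix the relevant comparison functions. For $l\in\mathbb{N}$ set $q_l(n):=n^{-\frac{1+ld}{l}}=n^{-(d+\frac{1}{l})}$. Since $q_l/q_{l+1}=n^{\frac{1}{l+1}-\frac{1}{l}}\to 0$, the $q_l$ form a strictly increasing chain $q_1\ll q_2\ll\cdots$ of $L$-functions, each $\ll n^{-d}$; moreover, for every $\epsilon>0$ one has $q_l\gg n^{-(d+\epsilon)}$ as soon as $\frac{1}{l}<\epsilon$. Now fix $p\in\bb$. If $p$ is eventually $0$, then $0\le p\le n^{-(d+1)}$ and we are in case $(i)(a)$; otherwise $p$ is an eventually positive $L$-function with $p\ll n^{-d}$, and the trichotomy for $L$-functions applies to the pair $(p,q_l)$ for each $l$.

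Next I would run the trichotomy. If $p\sim c\cdot q_l$ for some $l$ and some $c\in(0,+\infty)$, then $p$ is a convergence law by the proposition giving $(ii)(a)$ and we are done. Otherwise, for every $l$ exactly one of $p\ll q_l$ or $p\gg q_l$ holds, so that $L_+:=\{l:p\gg q_l\}$ and $L_-:=\{l:p\ll q_l\}$ partition $\mathbb{N}$. Using $q_l\ll q_{l+1}$ one checks that $L_+$ is downward closed (if $p\gg q_l$ and $l'<l$ then $q_{l'}\ll q_l\ll p$) and $L_-$ is upward closed (if $p\ll q_l$ and $l'>l$ then $p\ll q_l\ll q_{l'}$); consequently $L_+$ is an initial segment of $\mathbb{N}$.

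Finally I would read off the three remaining cases from the shape of $L_+$. If $L_+=\emptyset$ then $p\ll q_1=n^{-(d+1)}$, which is $(i)(a)$. If $L_+=\mathbb{N}$ then $p\gg q_l$ for all $l$, so for every $\epsilon>0$, choosing $l$ with $\frac{1}{l}<\epsilon$ gives $n^{-(d+\epsilon)}\ll q_l\ll p\ll n^{-d}$, which is $(i)(c)$. If $L_+=\{1,\dots,l_0\}$ is a nonempty proper initial segment then $q_{l_0}\ll p\ll q_{l_0+1}$, which is exactly $(i)(b)$ with $l=l_0$. In each case $p$ is a zero-one law, hence a convergence law. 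I expect the only delicate point to be this exhaustiveness bookkeeping: the trichotomy together with the monotonicity of the chain $(q_l)$ must be checked to leave no gaps in $0\le p\ll n^{-d}$. The analytic substance has already been supplied by the earlier propositions (and by Lemma \ref{poisson1} in the threshold case), so beyond the ordering argument there is nothing further to prove.
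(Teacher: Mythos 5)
Your proposal is correct and follows essentially the same route as the paper: the paper's proof consists precisely of the observation that every $L$-function with $0\le p\ll n^{-d}$ falls into one of the four cases already treated, and you have simply supplied the exhaustiveness bookkeeping (via the trichotomy for $L$-functions and the monotone chain $q_l$) that the paper leaves as "just note". No gaps.
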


\begin{proof}

Just note that any $L$-function on the above range satisfies one of the following conditions:

\begin{enumerate}

   \item $0\le p\le n^{-(d+1)}$
   \item $n^{-\frac{1+ld}{l}}\ll p\ll n^{-\frac{1+(l+1)d}{l+1}}$, for some $l\in\mathbb{N}$
   \item $n^{-(d+\epsilon)}\ll p\ll n^{-d}$ for all $\epsilon>0$
   \item $p\sim c\cdot n^{-\frac{1+ld}{l}}$ for some constant $c\in(0,+\infty)$

\end{enumerate}
\end{proof}


It is worth noting that the arguments used in getting zero-one laws for the intervals
\begin{enumerate}

   \item $0\le p\le n^{-(d+1)}$
   \item $n^{-\frac{1+ld}{l}}\ll p\ll n^{-\frac{1+(l+1)d}{l+1}}$, for some $l\in\mathbb{N}$
   \item $n^{-(d+\epsilon)}\ll p\ll n^{-d}$ for all $\epsilon>0$
   
 \end{enumerate}
 do not require the edge functions to be in Hardy's class, so \emph{all} functions inside those intervals are zero-one laws, regardless of being $L$-functions.
 
 On the other hand, taking $p=c(n)\cdot n^{-\frac{1+ld}{l}}$, where $c(n)$ oscillates infinitely
 often between two
 different  positive values is sufficient to rule out a convergence law for that edge function.
    
 Also we note that, in a certain sense, most of the functions in $\bb$ are zero-one laws: the only way one of that functions can avoid this condition is being inside
one of the countable windows inside a threshold of appearance of Berge-trees of some order. 

In the following sections, similar pieces of reasoning will yield an analogous result for another interval of edge functions.

     \section{Big-Crunch}

The present section is devoted to getting a result analogous to the ones above on another interval of edge functions, immediately after the double jump. We call that interval $\bc$, for Big-Crunch, because, informally, when ``time" (the edge functions $p$) flows forth, the behavior of the complement of the giant component is the same of the behavior $G^{d+1}(n,p)$ assumes in the Big-Bang $\bb$ with time flowing backwards.

More concretely, $\bc$ is the set of $L$-functions $p$ satisfying $n^{-d}\ll p\ll n^{-d+\epsilon}$ for all $\epsilon>0$. An important function inside this interval is $p=(\log n)n^{-d}$ which is known \cite{tese} to be 
the threshold for $G^{d+1}(n,p)$ to be connected. In the subintervals $n^{-d}\ll p\ll (\log n)n^{-d}$ and $(\log n)n^{-d}\ll p\ll n^{-d+\epsilon}$, nothing interesting happens in the first order perspective. This will imply that these intervals are entirely made of zero-one laws.

Inside the window $p\sim C\cdot(\log n)n^{-d}$ (with $C$ some positive constant), very much the opposite is true: here we find an infinite collection of local thresholds of elementary properties and also an infinite collection of zero-one and convergence laws.

\subsection{Just Past the Double Jump}

Consider the countable models of the almost sure theory $T_p$ with 

     $$n^{-d}\ll p\ll(\log n)n^{-d}.$$

 As we have already seen, in that range we still have components isomorphic to all
 finite Berge-trees of all orders and the possibility of infinite Berge-trees is still open.    
 The threshold for the appearance of small sub-hypergraphs excludes the possibility of bicyclic  
 (or more) components. By the same reason, we have components with cycles of all types.
 The following proposition shows, in particular, that vertices of small degree do not occur near the cycles.
 
 \begin{Prop}\label{unicyclic}

Suppose $p\gg n^{-d}$. Let $H$ be a finite connected configuration with at least one cycle and at least one vertex of small degree. Then the expected number of such configurations in $G^{d+1}(n,p)$ is $o(1)$.
In particular a.a.s. there are no such configurations.

\end{Prop}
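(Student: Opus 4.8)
The plan is to prove the statement by a direct first-moment computation, treating the degree constraint at the distinguished vertex as the decisive factor. First I would fix the combinatorial shape of $H$, say with $v$ vertices and $e$ edges, and extract the structural consequence of the hypotheses. Passing to the incidence graph of $H$, its cycle rank equals $de-v+1$; since $H$ is connected and contains at least one cycle this rank is $\ge 1$, whence $v\le de$. (A Berge-cycle itself has $v=de$ and density $e/v=1/d$, which by Theorem \ref{vantsyan} is exactly why such cyclic configurations \emph{do} appear once $p\gg n^{-d}$; the content of the proposition is that the extra low-degree vertex tips the balance.)

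Next I would incorporate the low-degree vertex. Let $w$ be the distinguished vertex, required to have small degree in the ambient hypergraph $G^{d+1}(n,p)$; in the extremal (and worst) case $w$ is incident to no edges beyond those already prescribed by $H$. The expected number of copies then factors, over the at most $n^{v}$ embeddings, as $p^{e}$ --- the probability that the $e$ edges of $H$ are present --- times the probability that $w$ acquires no further incident edges. These two events depend on \emph{disjoint} sets of potential edge-slots (the $e$ slots of $H$ versus the $\binom{n-1}{d}-\deg_H(w)$ remaining $(d+1)$-sets through $w$), hence are independent. Setting $\mu:=\binom{n-1}{d}\,p\sim n^{d}p/d!$, the second probability is $(1-p)^{\binom{n-1}{d}-O(1)}\le e^{-\mu(1+o(1))}$, and, allowing $w$ up to any fixed degree $D$, a lower-tail binomial estimate bounds it by $\mathrm{poly}(\mu)\,e^{-\mu}$.

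Combining the factors would give
\[
\mathbb{E}\bigl[\#\{\text{copies of }H\}\bigr]\;\le\;n^{v}p^{e}\cdot\mathrm{poly}(\mu)\,e^{-\mu}.
\]
Because $v\le de$, one has $n^{v}p^{e}\le (n^{d}p)^{e}=(d!\,\mu)^{e}$, a polynomial in $\mu$; hence the whole expression is bounded by $\mathrm{poly}(\mu)\,e^{-\mu}$. Since $p\gg n^{-d}$ forces $\mu\to\infty$, the exponential factor overwhelms the polynomial one, so the expectation is $o(1)$. Markov's inequality then yields that a.a.s.\ $G^{d+1}(n,p)$ contains no copy of $H$, as claimed.

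The hard part --- and the only place where the hypotheses are genuinely used --- is the second paragraph: a naive first moment counts plain unicyclic configurations with expectation $(n^{d}p)^{e}=(d!\,\mu)^{e}\to\infty$, so the argument \emph{cannot} succeed without extracting the factor $e^{-\mu}$ from the low-degree condition. The care needed is (i) to verify the independence of the ``$H$ is present'' and ``$w$ has small degree'' events by exhibiting their disjoint edge-slot supports, and (ii) to control the lower tail of the degree of $w$, which has mean $\mu\to\infty$, uniformly over whatever threshold defines ``small'' (any bound that is $o(\mu)$ suffices, via a Chernoff estimate). Once these are in place, the density inequality $v\le de$ guarantees that the surviving polynomial growth is no match for $e^{-\mu}$.
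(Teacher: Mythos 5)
Your proposal is correct and follows essentially the same route as the paper's own proof: a first-moment bound in which the low-degree vertex contributes the factor $(1-p)^{\sim n^d/d!}\le e^{-\mu(1+o(1))}$, the connectedness-plus-cycle hypothesis gives $v\le de$ so that $n^v p^e$ is polynomial in $\mu=\Theta(n^dp)$, and the exponential beats the polynomial since $\mu\to\infty$. Your write-up merely makes explicit some points the paper leaves implicit (the disjoint edge-slot independence and the uniform handling of all fixed small degrees via a lower-tail estimate).
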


\begin{proof}

Let the configuration $H$ have $v$ vertices and $l$ edges. As $H$ is connected and has at least one cycle, we have $v\leq ld$. For convenience, set $\alpha=\frac{pn^d}{d!}$. Note that $\alpha\to +\infty$.
Let $E$ be the expected number of configurations $H$. Then

\begin{align*}
E&= O\left(\frac{n^v}{v!}p^l(1-p)^{\frac{n^d}{d!}}\right)= O\left(\frac{n^v}{v!}p^l\exp(-p\frac{n^d}{d!})\right) \\
&=O\left(n^{dl}p^l\exp(-p\frac{n^d}{d!})\right)t=O\left(\alpha^l\exp(-\alpha)\right)=o(1).
\end{align*}
The last part follows from the first moment method.
\end{proof}

Now it is easy to see that the edge functions in the present range are zero-one laws, getting part $(1)(d)$ of \ref{grandeteorema}.

\begin{Prop}

Suppose $p$ is an edge function satisfying
                $$n^{-d}\ll p\ll(\log n)n^{-d}.$$
        Then $p$ is a zero-one law.
               
\end{Prop}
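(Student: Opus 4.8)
The plan is to invoke the criterion proved earlier, that $p$ is a zero-one law if and only if all countable models of the almost sure theory $\Theta_p$ are elementarily equivalent, and then to establish elementary equivalence through the Ehrenfeucht--Fra\"iss\'e game of Proposition \ref{winning strategy}. First I would pin down the structure of an arbitrary countable $H\models\Theta_p$. Since $p\gg n^{-d}\gg n^{-\frac{1+ld}{l}}$ for every $l$, Proposition \ref{threshold}(ii) forces $H$ to contain infinitely many components isomorphic to each finite Berge-tree. A minimal bicyclic connected configuration has density strictly exceeding $1/d$, so by Theorem \ref{vantsyan} its threshold is $\gg n^{-d}$; as $p\ll(\log n)n^{-d}$ stays well below any such threshold, $H$ has no component carrying two independent cycles. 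Finally, Proposition \ref{unicyclic}, applied to every finite connected configuration that simultaneously contains a cycle and a vertex of small degree, shows that no cycle of $H$ has a low-degree vertex nearby. Combining these, every finite component of $H$ is a Berge-tree, and every cycle of $H$ lies inside an \emph{infinite} component all of whose cycle-adjacent vertices have large degree.

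With this picture in hand, the main work is the game. Fix $k$ and consider $\ehf(H_1,H_2;k)$ for countable models $H_1,H_2$. I would have Duplicator maintain the usual locality invariant: after round $j$ the selected vertices induce isomorphic balls of radius $r_j=3^{\,k-j}$ in the incidence-graph metric, the isomorphism respecting the previously matched pairs. When Spoiler plays inside a finite Berge-tree, Duplicator answers in an as-yet-untouched isomorphic copy, of which there are infinitely many, exactly as in the acyclic case treated before. When Spoiler plays inside an infinite component, Duplicator must produce a vertex of the opposite model realizing the same rooted $r_j$-neighborhood type; here the structural facts above are what make this possible, since each such ball meets at most one cycle and carries no small-degree vertex near it, so only finitely many rooted ball-types arise and each type realized in one model is realized, with unbounded multiplicity, in the other.

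The genuine obstacle is precisely this matching of neighborhood types inside the infinite, cyclic components. I must show that the collection of rooted $r$-ball isomorphism types that can occur is the \emph{same finite set} in every countable model of $\Theta_p$, and that no type is exhausted within $k$ rounds. The natural way to close this is to exhibit an explicit axiomatization $\Theta$ of $\Theta_p$: a scheme excluding bicyclic components and cycles adjacent to low-degree vertices, together with a richness scheme asserting, for each admissible finite configuration free of low-degree cycle-vertices, the existence of arbitrarily many pairwise-disjoint copies. One then checks that any model of $\Theta$ has exactly the local profile described above, so that the type sets coincide and Duplicator's strategy never stalls. Once elementary equivalence is secured, the Corollary yields that $p$ is a zero-one law and that $\mathcal{K}(\Theta_p)$ collapses to a one-point space, which is part $(i)(d)$ of Theorem \ref{grandeteorema}.
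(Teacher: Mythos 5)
Your proposal follows essentially the same route as the paper: first pin down the countable models of $\Theta_p$ (infinitely many components isomorphic to each finite Berge-tree, no bicyclic components, no vertices of small degree in components containing cycles), then conclude elementary equivalence via the Ehrenfeucht--Fra\"iss\'e game. The one place where you work harder than necessary is the ``genuine obstacle'' of matching rooted ball types inside the cyclic components: since Proposition \ref{unicyclic} forces \emph{every} vertex of a component containing a cycle to have infinitely many neighbors (any such vertex sits in a finite connected configuration together with the cycle), each unicyclic component is determined up to isomorphism by its cycle type, and each cycle type occurs in infinitely many components; hence the union of the unicyclic components is isomorphic in any two countable models, Spoiler simply wastes a move by playing there, and no counting of neighborhood types or additional richness scheme is needed.
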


\begin{proof}

By Proposition \ref{unicyclic}, every vertex in the union of all the unicyclic components has infinite neighbors. This determines these components up to isomorphism and it does not pay for Spoiler to play there.
But in the complement of the above set, we have already seen that Duplicator can win all
$k$-round Ehrenfeucht Games. Therefore all countable models of $\Theta_p$ are elementarily
equivalent and these $p$ are zero-one laws.
\end{proof}

We note that the non-existence of vertices of small 
degree near cycles is first-order axiomatizable. For each $l,s,k\in\mathbb{N}$ there is a first order sentence which excludes all of the (finitely many)
 configurations with cycles of order $\le l$ at distance $\le s$ from one vertex of degree $\le k$. Similar considerations show that the non-existence of bicyclic (or more) components is also first-order
axiomatizable. So one easily gets a simple axiomatization for the almost sure theories of the
above edge functions.

\subsection{Beyond Connectivity}

Now we consider countable models of $\Theta_p$ with 

                     $$(\log n)n^{-d}\ll p\ll n^{-d+\epsilon}$$
  for all positive $\epsilon$.
  
  Again, the thresholds for appearance of small sub-hypergraphs imply that, in this range, we 
  have all cycles of all types as sub-hypergraphs, and no bicyclic (or more) components.
  Now all vertices of small degree are gone.
  
  \begin{Prop}
  
  For $p\gg (\log n)n^{-d}$, the expected number of vertices of small degree in $G^{d+1}(n,p)$
  is $o(1)$. In particular, a.a.s. there are no vertices of small degree.
  
   \end{Prop}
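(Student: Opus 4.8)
The plan is to argue by the first moment method, exactly in the spirit of the proof of Proposition \ref{unicyclic}. Fix the constant $k$ bounding what we mean by \emph{small degree}, so that a vertex is of small degree precisely when it lies in at most $k$ edges. The degree of any fixed vertex $v$ is the number of the $N:=\binom{n-1}{d}$ potential edges through $v$ that are actually present; since these events are independent, this degree is distributed as a binomial with parameters $N$ and $p$. By linearity of expectation, if $D_j$ denotes the number of vertices of degree exactly $j$, then
$$\mathbb{E}[D_j]=n\binom{N}{j}p^j(1-p)^{N-j}.$$
First I would record the asymptotics $N\sim n^d/d!$ and introduce the expected degree $\lambda:=Np\sim pn^d/d!$; the hypothesis $p\gg(\log n)n^{-d}$ is precisely the statement that $\lambda/\log n\to+\infty$.

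Next I would bound each term. Using $\binom{N}{j}p^j\le\lambda^j/j!$ together with $(1-p)^{N-j}\le e^{-p(N-j)}=e^{-\lambda}e^{pj}$, and noting that $e^{pj}\to 1$ since $p\to 0$ and $j\le k$ is fixed, one obtains
$$\mathbb{E}[D_j]\le (1+o(1))\,n\,\frac{\lambda^j}{j!}\,e^{-\lambda}.$$
The decisive step is to observe that $ne^{-\lambda}$ tends to $0$ faster than any fixed power of $\log n$. Writing $\lambda=\omega\log n$ with $\omega\to+\infty$ gives $ne^{-\lambda}=n^{1-\omega}$, whereas $\lambda^j=(\omega\log n)^j$ grows only polylogarithmically; hence $n\lambda^j e^{-\lambda}\to 0$ for each fixed $j$.

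Finally, summing over the finitely many values $j\in\{0,1,\dots,k\}$ shows that the expected number of small-degree vertices is a finite sum of terms each tending to $0$, and is therefore $o(1)$. The \emph{in particular} clause then follows from Markov's inequality: since $\mathbb{P}_n[\exists\text{ a vertex of small degree}]\le\mathbb{E}\big[\sum_{j\le k}D_j\big]\to 0$, a.a.s. no such vertex exists. The only point requiring genuine care is the elementary but decisive interplay, in the third paragraph, between the polylogarithmic growth of $\lambda^j$ and the super-polynomial decay of $ne^{-\lambda}$; this is exactly where the connectivity threshold $(\log n)n^{-d}$ enters, and it is precisely what fails just below it (where $\lambda=O(\log n)$ and isolated vertices survive).
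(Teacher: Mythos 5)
Your argument is correct and follows essentially the same route as the paper: a first-moment computation of the expected number of degree-$j$ vertices, concluding via Markov's inequality. You are in fact slightly more careful than the paper's one-line display, which silently drops the polylogarithmic factor $\binom{N}{j}p^j\sim\lambda^j/j!$; your third paragraph supplies exactly the observation needed to justify that this factor is harmless.
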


   \begin{proof}
   
   Fix a natural number $k$ and let $E$ be the expected number of vertices of degree
   $k$ in $G^{d+1}(n,p)$.  Then
   
        $$E\sim n(1-p)^{\frac{n^d}{d!}}\sim n\exp\left({-p\frac{n^d}{d!}}\right)=o(1).$$
   \end{proof} 
   
   The following proposition gives part $(i)(h)$ of \ref{grandeteorema}.
   
\begin{Prop} \label{beyond conectivity}

Let $p$ be an edge function satisfying
              $$(\log n)n^{-d}\ll p\ll n^{-d+\epsilon}$$
         Then $p$ is a zero-one law.

\end{Prop}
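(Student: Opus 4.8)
The plan is to show that $p$ is a zero--one law by proving that all countable models of $\Theta_p$ are elementarily equivalent, using the Ehrenfeucht--Fra\"iss\'e criterion (Proposition \ref{winning strategy} and its corollaries). First I would collect the almost sure structural facts already established for this range. By Vantsyan's threshold result (Theorem \ref{vantsyan}), since $p\gg(\log n)n^{-d}\gg n^{-d}$, the graph a.a.s.\ contains, as sub-hypergraphs, copies of every fixed cycle of every type, and in fact infinitely many of each. Also, the threshold for the appearance of bicyclic (or denser) configurations lies above $n^{-d+\epsilon}$ for small $\epsilon$, so a.a.s.\ there are no components containing two or more independent cycles; equivalently, every component is either acyclic or unicyclic. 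Finally, by the immediately preceding proposition, for $p\gg(\log n)n^{-d}$ the expected number of vertices of small degree is $o(1)$, so a.a.s.\ \emph{every} vertex has arbitrarily large (``infinite'', in the countable model) degree.

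Next I would assemble these into a first-order axiomatizable list of almost sure properties and argue that they pin the countable models down up to elementary equivalence. The key point distinguishing this range from the ``just past the double jump'' case (Proposition on $n^{-d}\ll p\ll(\log n)n^{-d}$) is that now there are no vertices of small degree \emph{anywhere}, not merely away from cycles. So a countable model of $\Theta_p$ looks like a disjoint union of acyclic and unicyclic components in which every vertex has infinite degree and in which every finite cyclic (unicyclic) configuration that can be extended to have all infinite degrees occurs infinitely often. I would state, for each $l,k,s\in\mathbb{N}$, the elementary sentence asserting the non-existence of the (finitely many) bicyclic configurations of bounded size and the non-existence of vertices of degree $\le k$, exactly as indicated in the preceding discussion; these are genuinely first-order and belong to $\Theta_p$.

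The heart of the argument is a winning strategy for Duplicator in $\ehf(H_1,H_2;k)$ for arbitrary countable models $H_1,H_2\models\Theta_p$. I would run the standard bounded-distance / back-and-forth strategy: Duplicator maintains, after round $i$, a partial isomorphism between neighborhoods of radius $r_i$ (with $r_i$ decreasing in a prescribed way, e.g.\ $r_i=3^{\,k-i}$) around the chosen vertices in $G(H_1)$ and $G(H_2)$. When Spoiler plays a new vertex, Duplicator responds by locating an isomorphic local configuration; the existence of such a response is guaranteed precisely because every vertex has infinite degree (so any ``fan'' of new edges Spoiler exhibits can be matched) and because both acyclic and unicyclic local pictures of any bounded size occur infinitely often and are available far from all previously played vertices. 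Since both models have the \emph{same} local structure --- all degrees infinite, only acyclic and unicyclic local configurations, each occurring infinitely often --- Duplicator can always stay within these invariants. The main obstacle, and the step I would be most careful about, is the bookkeeping near a cycle: Spoiler may try to force Duplicator to reveal the unique cycle in a unicyclic component, so Duplicator must match cycle lengths and the relative position of played vertices along the cycle. I would handle this by the usual convention that once Spoiler plays inside the radius-$r_i$ neighborhood of an existing cycle, Duplicator replies inside the isomorphic copy of that cycle (which exists in the other model), while if Spoiler plays ``fresh'' Duplicator replies in a brand-new component isomorphic to the relevant bounded local type. Because $k$ is fixed in advance and the radii shrink geometrically, only finitely many rounds occur and the neighborhoods never collide, so the maintained partial isomorphism survives all $k$ rounds; by Proposition \ref{winning strategy} the models are elementarily equivalent, and by the corollary to that proposition $p$ is a zero--one law. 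Consequently $\mathcal{K}(\Theta_p)$ is again the one-point space $\{\Theta_p\}$.
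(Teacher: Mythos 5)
Your proposal is correct and follows essentially the same route as the paper: identify the almost sure structure of the countable models (all cycle types present, no bicyclic components, no vertices of small degree, so all vertices have infinite degree and the cyclic components are determined up to isomorphism), then conclude elementary equivalence via Duplicator's winning strategy in the Ehrenfeucht--Fra\"iss\'e game. The only difference is that you sketch the back-and-forth strategy explicitly, whereas the paper defers those winning-strategy results to the references.
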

   
\begin{proof}

   The countable models of $\Theta_p$ have components that contain cycles of all types, no bicyclic 
   (or more) components and may possibly have Berge-tree components. As no vertex can have
   small degree, all vertices in that components have infinite neighbors, so these components
   are unique up to isomorphism. But $\Theta_p$ is not $\aleph_0$-categorical since the existence of Berge-tree components is left open.
 The results concerning wining strategies for Duplicator mentioned before  give that these models are elementarily equivalent, so these $p$
 are zero-one laws. 
  \end{proof}
 
 The discussion found on the proof of Theorem \ref{beyond conectivity}  also gives simple axiomatizations for the almost sure theories
 of the above $p$.

         \subsection{Marked Berge-Trees}  
         
         Now we are left to the case of $L$-functions $p$ comparable to $n^{-d}\log n$. In other words, to complete our discussion, we must get a description of what happens when an edge function $p$ is such that $n^d p\slash \log n$ tends to a finite constant $C\neq 0$.
         
         In the last section, the counting of the connected components isomorphic to Berge-trees was the 
   fundamental piece of information in the arguments that implied all the convergence laws we found there. Copies of 
   Berge-trees as connected components are, in particular, induced such copies.
         
       It turns out that the combinatorial structure whose counting is fundamental to getting
  the convergence laws in the window $p\sim C\cdot\frac{\log n}{n^d}$ is still that of Berge-trees, but
  now the copies are not necessarily induced. Instead, some vertices receive
  markings, meaning that those vertices must have no further neighbors than those indicated on
  the ``model" Berge-tree. On the non-marked vertices no such requirement is imposed: they are
  free to bear further neighbors. These copies of Berge-trees are then, in a sense, ``partially induced".

\begin{Def}

Let $v^*,l\in\mathbb{N}$. A $v^*$-marked $l$-\emph{Berge-tree} is a finite connected (Berge)-acyclic hypergraph with $l$ edges and with $v^*$ distinguished vertices, called the \emph{marked} vertices.

\end{Def}

  \begin{figure}[!htb]
    \centering
    \includegraphics[height=60mm,angle=90]{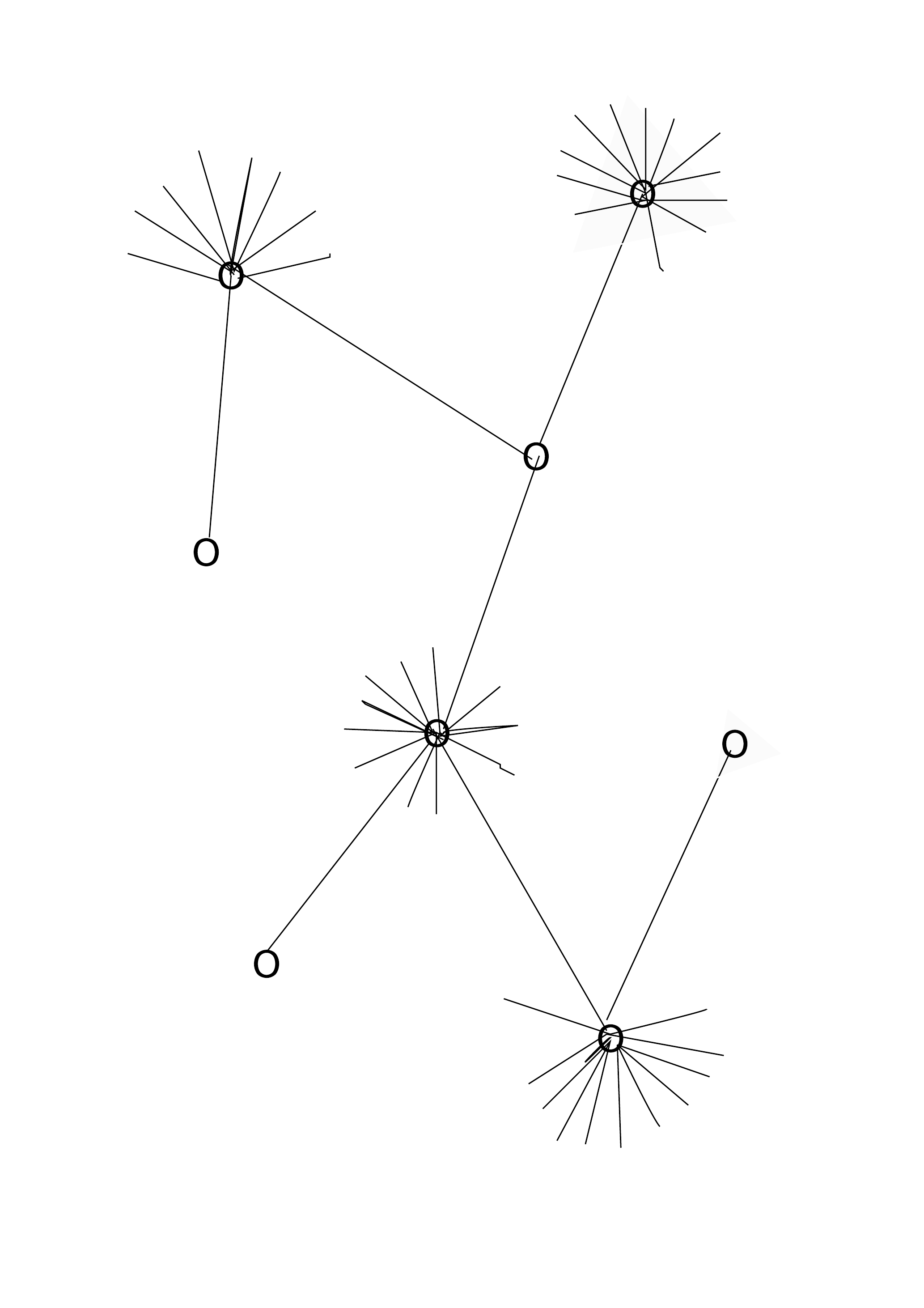}
    \caption{A marked Berge-tree}
   \label{figRotulo}
  \end{figure}


Note that a $v^*$-marked $l$-Berge-tree is a hypergraph on $v=1+ld$ vertices.

\begin{Def}

Let $B$ be a $v^*$-marked $l$-Berge-tree and $H$ be a hypergraph. A \emph{copy} of $B$ in $H$ is a (not necessarily induced) sub-hypergraph of $H$ isomorphic to $B$ where if $w$ is a marked vertex of $B$ and $w'$ is the corresponding vertex of $H$ under the above isomorphism, then $w$ and $w'$ have the same degree.

\end{Def}

An edge of a Berge-acyclic hypergraph incident to exactly one other edge is called a \emph{leaf}.

\begin{Def}

A $v^*$-marked $l$-Berge-tree is called \emph{minimal} if every leaf has at least one marked vertex.

\end{Def}

  \begin{figure}[!htb]
    \centering
   \includegraphics[height=60mm,angle=90]{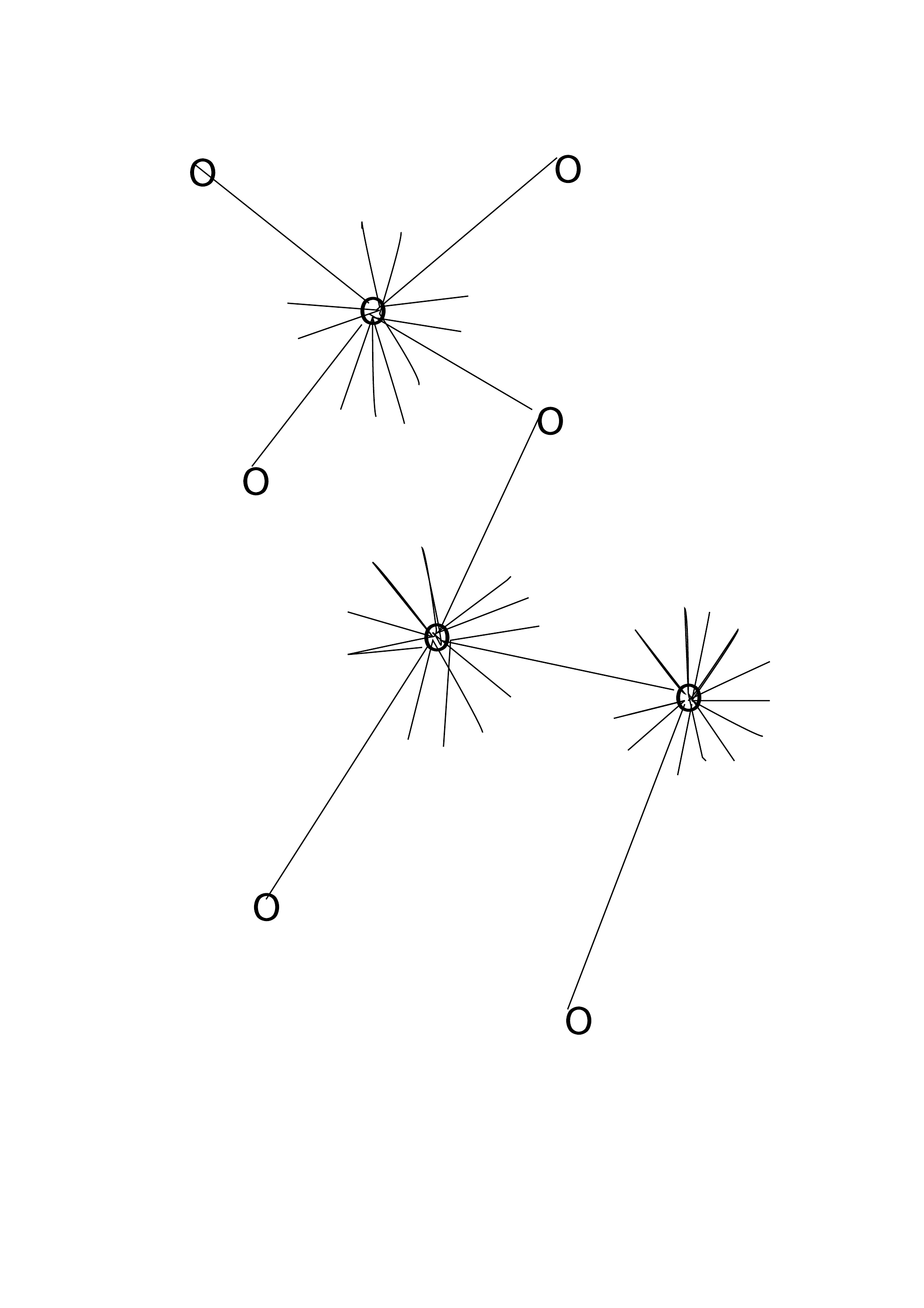}
    \caption{A minimal marked Berge-tree}
    \label{figRotulo2}
  \end{figure}   
  
  \begin{figure}[!htb]
    \centering
   \includegraphics[height=60mm,angle=90]{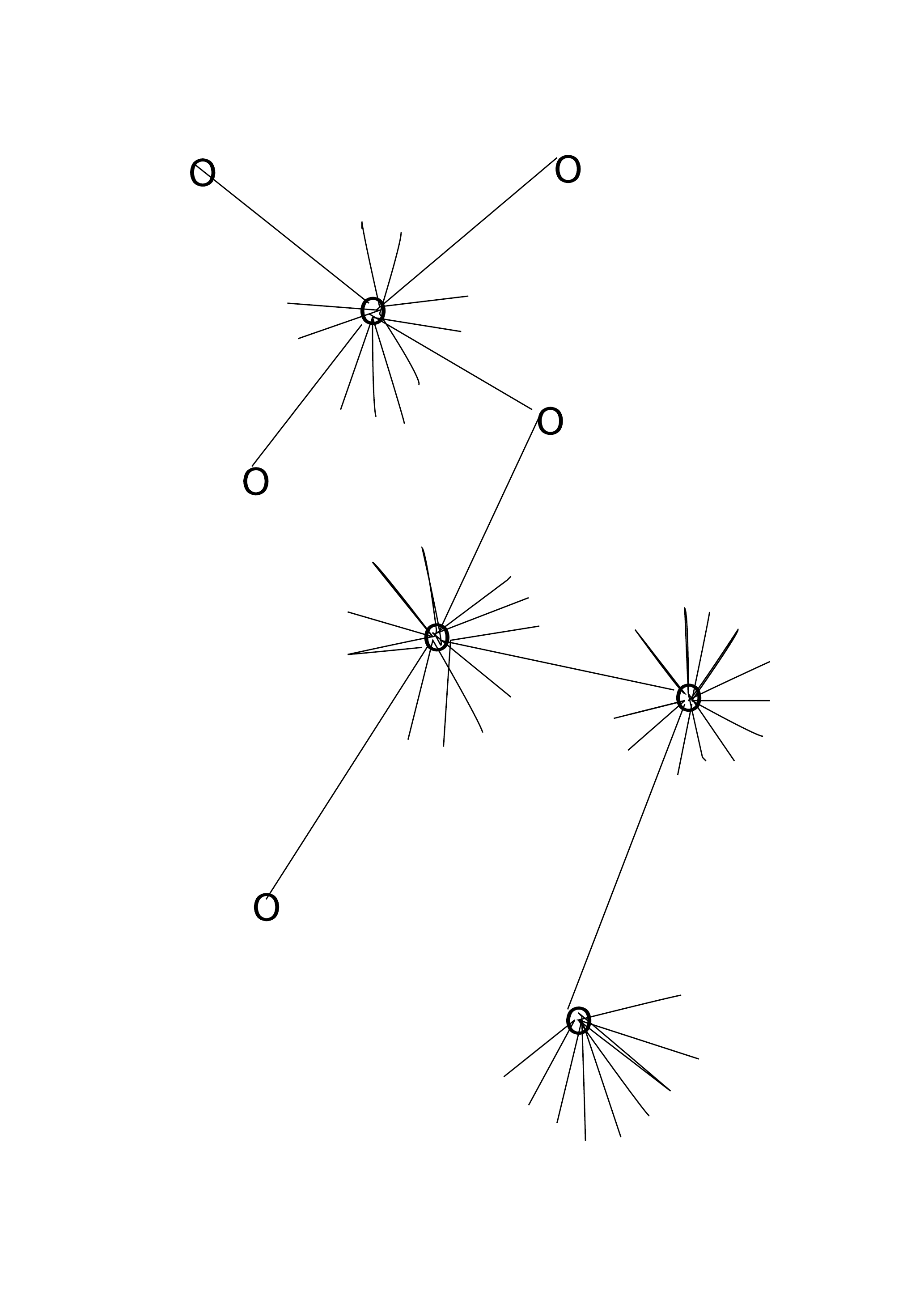}
    \caption{A non-minimal marked Berge-tree}
    \label{figRotulo2}
  \end{figure}

Now, the most important concept to understanding the zero-one and convergence laws on $\bc$ is the counting of minimal marked Berge-trees.

\begin{Def}

Let $\Gamma$ be the finite set of all isomorphism types of minimal $v^*$-marked $l$-Berge-trees on $1+ld$ labelled vertices and fix $\gamma\in\Gamma$.
Then  $c(l,v^*,\gamma)$ is the number of possible $v^*$-marked $l$-Berge-trees of isomorphism type $\gamma$ on $1+ld$ labelled vertices.

\end{Def}

\begin{Def}

The random variable $A(l,v^*,\gamma)$ is the number of copies of $v^*$-marked $l$-Berge-trees of isomorphism type $\gamma$ in $G^{d+1}(n,p)$.

\end{Def}

                      \subsubsection{Counting of Marked Berge-Trees}
                      
 Now we use the first and second moment methods to get precise information on the 
 counting of minimal marked Berge-trees for edge functions on the range
  
                           $$p\sim C\cdot\frac{\log n}{n^d} \ , \  C>0.$$
                           
Rather informally, when the coefficient of $\frac{\log n}{n^d}$ in $p$ avoids the rational value
$\frac{d!}{v^*}$ then the expected number of $v^*$-marked Berge-trees is either $0$ or $\infty$. The first moment method implies that, in the first case, a.a.s. there are no $v^*$-marked Berge-trees.
The second moment method will yield that, in the second case, there are many such minimal marked Berge-trees.

If $C=\frac{d!}{v^*}$, then knowledge of more subtle behavior of the edge function is required:
we are led to consider the coefficient $\omega$ of $\frac{\log\log n}{n^d}$ in $p$. If this coefficient
avoids the integer value $l$ then the expected number of $v^*$-marked $l$-Berge-trees is either
$0$ or $\infty$. Again, first and second moment arguments imply that, in the first case, the number 
of such Berge-trees is a.a.s. zero and, in the second case, the number of such minimal Berge-trees is very large.

Finally, if $\omega=l$, then knowledge of even more subtle behavior of the edge function is 
required: we consider the coefficient $c$ of $\frac{1}{n^d}$ in $p$. If this coefficient diverges,
then the expected number of $v^*$-marked $l$-Berge-trees is either $0$ or $\infty$ and, again,
first and second moment methods imply that the actual number of such Berge-trees is what 
one expects it to be.

All above cases give rise to zero-one laws. The remaining case is the one when the coefficient
$c$ converges. In this case, the fact that the almost sure theories are almost complete
will yield convergence laws.


Let $p=p(n)$ be comparable to $\frac{\log n}{n^d}$. That is, let $\frac{n^d p}{\log n}$ converge to a constant $C\neq 0$.

\begin{Prop}

Fix $\gamma\in\Gamma$.

  \begin{enumerate}

             \item If $C<\frac{d!}{v^*}$ then $\mathbb{E}_n[A(l,v^*,\gamma)]\to +\infty$.

             \item If $C>\frac{d!}{v^*}$ then $\mathbb{E}_n[A(l,v^*,\gamma)]\to 0$ for all $l\in\mathbb{N}$.

  \end{enumerate}

In particular, if $C>\frac{d!}{v^*}$ then, for any $l\in\mathbb{N}$, a.a.s. $A(l,v^*,\gamma)=0$.

\end{Prop}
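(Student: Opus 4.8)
The plan is to compute the expected number $\mathbb{E}_n[A(l,v^*,\gamma)]$ directly by the linearity of expectation, and then analyze its asymptotic behavior as $n\to\infty$ under the hypothesis $p\sim C\cdot\frac{\log n}{n^d}$. The random variable $A(l,v^*,\gamma)$ counts copies of the $v^*$-marked $l$-Berge-tree of type $\gamma$ in $G^{d+1}(n,p)$, so I would express it as a sum of indicator variables over all choices of $v=1+ld$ vertices together with all $c(l,v^*,\gamma)$ ways of placing a copy of type $\gamma$ on them. Each such indicator has expectation equal to the probability that the required $l$ edges are present \emph{and} that each of the $v^*$ marked vertices has no further incident edges beyond those prescribed by the model. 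The presence of the $l$ prescribed edges contributes a factor $p^l$; the ``no further neighbors'' condition at the marked vertices contributes a factor of the form $(1-p)^{M}$, where $M$ counts the potential edges incident to a marked vertex but not already used in the copy.

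Next I would estimate these factors. Writing $\alpha=\frac{pn^d}{d!}$ as in Proposition \ref{unicyclic}, one has $p^l\sim\left(\frac{d!\,\alpha}{n^d}\right)^l$, and the number of $v$-sets is $\binom{n}{v}\sim \frac{n^v}{v!}$. The key factor is the one enforcing minimality: for each marked vertex the number of forbidden potential edges is asymptotically $\binom{n-1}{d}\sim\frac{n^d}{d!}$, so $M\sim v^*\cdot\frac{n^d}{d!}$, giving a factor
\begin{equation*}
(1-p)^{M}\sim\exp\!\left(-p\cdot v^*\cdot\frac{n^d}{d!}\right)=\exp(-v^*\alpha).
\end{equation*}
Collecting everything, the expected count behaves like a constant (depending on $l,v^*,\gamma$) times $n^{v}\cdot n^{-dl}\cdot\exp(-v^*\alpha)=n\cdot\exp(-v^*\alpha)$, using $v-dl=1$. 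Finally, substituting $\alpha\sim\frac{p n^d}{d!}\sim\frac{C}{d!}\log n$ yields
\begin{equation*}
\mathbb{E}_n[A(l,v^*,\gamma)]\sim \text{const}\cdot n\cdot\exp\!\left(-\frac{v^*C}{d!}\log n\right)=\text{const}\cdot n^{\,1-\frac{v^*C}{d!}}.
\end{equation*}
From this expression both dichotomy cases follow at once: the exponent $1-\frac{v^*C}{d!}$ is positive exactly when $C<\frac{d!}{v^*}$, forcing the expectation to $+\infty$, and negative exactly when $C>\frac{d!}{v^*}$, forcing it to $0$ (and this holds uniformly over $l$, since $l$ affects only the constant). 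The final assertion then follows from the first moment method, i.e.\ Markov's inequality: if $\mathbb{E}_n[A(l,v^*,\gamma)]\to 0$ then $\mathbb{P}_n[A(l,v^*,\gamma)\ge 1]\le \mathbb{E}_n[A(l,v^*,\gamma)]\to 0$, so a.a.s.\ $A(l,v^*,\gamma)=0$.

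I expect the main obstacle to be the careful bookkeeping of the minimality (``no further neighbors'') factor and its interaction with the edges already present in the copy. One must verify that the potential edges incident to distinct marked vertices are counted without harmful double-counting, and that the edges prescribed by the copy (and any shared among marked vertices) contribute only lower-order corrections to the exponent of $(1-p)$; since there are only $O(1)$ such corrections while $M$ grows like $n^d$, these do not affect the leading exponential. A secondary technical point is justifying the approximations $(1-p)^M\sim\exp(-pM)$ and $p M\sim v^*\alpha$ uniformly, which is routine given $p\to 0$ and $pn^d\to\infty$, but should be stated cleanly so that the dependence of the constant on $\gamma$ and $l$ is transparent and the borderline value $C=\frac{d!}{v^*}$ is visibly the unique transition.
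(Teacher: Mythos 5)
Your proposal is correct and follows essentially the same route as the paper: write $A(l,v^*,\gamma)$ as a sum of indicators over $v$-sets and placements, estimate each expectation as $p^l(1-p)^{v^*n^d/d!}$, and read off the dichotomy from the sign of the exponent $1-\frac{Cv^*}{d!}$, with the first moment method giving the final assertion. The only small inaccuracy is that your closing display $\mathbb{E}_n[A(l,v^*,\gamma)]\sim\text{const}\cdot n^{1-\frac{v^*C}{d!}}$ silently absorbs a $(C\log n)^l$ factor and an $n^{o(1)}$ correction (coming from $p\sim C\frac{\log n}{n^d}$ rather than equality) into the ``constant''; the paper keeps these visible as $\frac{c}{v!}(C\log n)^l\,n^{1-\frac{Cv^*}{d!}+o(1)}$, but since $1-\frac{Cv^*}{d!}\neq 0$ in both cases the conclusion is unaffected.
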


\begin{proof}

Set $c=c(l,v^*,\gamma)$ and $v=1+ld$.

Note that $pv^*\frac{n^d}{d!}\sim Cv^*\frac{\log n}{d!}$ so $pv^*\frac{n^d}{d!}- Cv^*\frac{\log n}{d!}=o(1)\log n$. Therefore one has

\begin{align*}
&\mathbb{E}_n[A(l,v^*,\gamma)]\sim 
c\frac{n^v}{v!}p^l(1-p)^{v^*\frac{n^d}{d!}} 
\sim c\frac{n^v}{v!}p^l\exp\left(-p v^*\frac{n^d}{d!}\right) \\
&\sim c\frac{n^v}{v!}p^l\exp\left(o(1)\log n-Cv^*\frac{\log n}{d!}\right) \\
&\sim c\frac{n^v}{v!}(C\log n)^l n^{-ld}\exp\left(o(1)\log n-Cv^*\frac{\log n}{d!}\right) \\
&\sim\frac{c}{v!}(C\log n)^l n^{1-\frac{Cv^*}{d!} + o(1)},
\end{align*}

and the result follows.

The last part follows from the first moment method.
\end{proof}

 Now consider $p\sim\frac{d!}{v^*}\cdot\frac{\log n}{n^d}$ so that $v^*n^d\frac{p}{d!}-\log n=o(1)\log n$ and let 

$$\omega(n)=\frac{v^*n^d\frac{p}{d!}-\log n}{\log\log n}.$$ 

\begin{Prop}

 Fix $l\in\mathbb{N}$ and $\epsilon>0$.
 
                       \begin{enumerate}
           
          \item If eventually $\omega<l-\epsilon$ then $\mathbb{E}_n[A(l,v^*,\gamma)]\to +\infty$

          \item If eventually $\omega>l+\epsilon$ then $\mathbb{E}_n[A(l,v^*,\gamma)]\to 0$.

                        \end{enumerate}
In particular, the second condition implies that a.a.s. $A(l,v^*,\gamma)=0$.

\end{Prop}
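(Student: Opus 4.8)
The plan is to compute the expected value $\mathbb{E}_n[A(l,v^*,\gamma)]$ asymptotically, exactly as in the preceding proposition, but now tracking the more refined behavior of $p$ encoded in $\omega$. Recall from that computation that, with $c = c(l,v^*,\gamma)$ and $v = 1+ld$, one has
\begin{equation*}
\mathbb{E}_n[A(l,v^*,\gamma)] \sim \frac{c}{v!}\, p^l\, n^v\, \exp\left(-p v^* \frac{n^d}{d!}\right).
\end{equation*}
The key algebraic input is the defining relation for $\omega$, namely $v^* n^d \frac{p}{d!} = \log n + \omega \log\log n$, which I would substitute directly into the exponential. Since here $p \sim \frac{d!}{v^*}\cdot\frac{\log n}{n^d}$, I have $p^l \sim \left(\frac{d!}{v^*}\right)^l (\log n)^l n^{-ld}$, so the prefactor contributes a factor of order $(\log n)^l$ times $n^{v-ld} = n^{1}$ (using $v - ld = 1$).

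The main computation is then to combine these. First I would write
\begin{equation*}
\exp\left(-p v^* \frac{n^d}{d!}\right) = \exp\left(-\log n - \omega \log\log n\right) = n^{-1}(\log n)^{-\omega},
\end{equation*}
so that the factors of $n^{1}$ and $n^{-1}$ cancel exactly, leaving
\begin{equation*}
\mathbb{E}_n[A(l,v^*,\gamma)] \sim \frac{c}{v!}\left(\frac{d!}{v^*}\right)^l (\log n)^{l}\,(\log n)^{-\omega} = \frac{c}{v!}\left(\frac{d!}{v^*}\right)^l (\log n)^{\,l-\omega}.
\end{equation*}
From this expression everything follows by inspecting the sign of the exponent $l-\omega$. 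Under hypothesis (a), eventually $\omega < l-\epsilon$, so eventually $l - \omega > \epsilon > 0$, and the power $(\log n)^{l-\omega}$ grows without bound; hence $\mathbb{E}_n[A] \to +\infty$. Under hypothesis (b), eventually $\omega > l+\epsilon$, so eventually $l-\omega < -\epsilon < 0$ and $(\log n)^{l-\omega} \to 0$; hence $\mathbb{E}_n[A] \to 0$. The final claim in case (b) then follows immediately from the first moment method (Markov's inequality), since a nonnegative integer-valued random variable with vanishing expectation is a.a.s. equal to zero.

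The step I expect to require the most care is the bookkeeping of the $o(1)\log n$ error terms already present in the previous proposition's estimate, now in the finer regime where the surviving power of $\log n$ is itself controlled by $\omega$. Specifically, I must ensure that the $o(1)$-type corrections coming from $1-p$ versus $\exp(-p)$ and from the $\sim$ in $p^l$ do not interfere with the decisive factor $(\log n)^{l-\omega}$ — that is, that these corrections contribute only multiplicative factors of the form $n^{o(1)}$ or $1+o(1)$, which are dominated by the strict inequalities $l-\omega > \epsilon$ or $l-\omega < -\epsilon$ assumed to hold eventually. The uniform $\epsilon$-gap in the hypotheses is precisely what guarantees this domination, so the argument goes through cleanly provided one keeps the inequality strict and absorbs all lower-order fluctuations into the margin $\epsilon$.
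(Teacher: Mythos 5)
Your proposal is correct and follows the paper's own argument essentially verbatim: the same first-moment computation, the same substitution of $v^* n^d \frac{p}{d!} = \log n + \omega\log\log n$ into the exponential, arriving at $\mathbb{E}_n[A(l,v^*,\gamma)] \sim \frac{c}{v!}\left(\frac{d!}{v^*}\right)^l(\log n)^{l-\omega}$, and the same appeal to the first moment method for the final claim. Your closing remarks about the $\epsilon$-gap absorbing the lower-order corrections are a sensible elaboration of what the paper leaves implicit.
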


\begin{proof} 

Set $c:=c(l,v^*,\gamma)$ and $v:=1+ld$.

Note that  $$v^*n^d\frac{p}{d!}=\log n+\omega\log\log n$$ 
so one has

\begin{align*}
&\mathbb{E}_n[A(l,v^*,\gamma)]\sim \frac{c}{v!}n^v p^l(1-p)^{v^*\frac{n^d}{d!}} 
\sim \frac{c}{v!}n^v p^l\exp\left(-p v^*\frac{n^d}{d!}\right) \\
&\sim\frac{c}{v!}n^v p^l\exp(-\log n-\omega\log\log n) \\
&\sim \frac{c}{v!}n^v\left(\frac{d!}{v^*}\log n\right)^ln^{-ld}n^{-1}(\log n)^{-\omega} 
\sim\frac{c}{v!}\left(\frac{d!}{v^*}\right)^l(\log n)^{l-\omega}
\end{align*}

 and the result follows. 
 
The last part follows from the first moment method.
\end{proof}

Now consider the case $\omega\to l\in\mathbb{R}$ and let

            $$c(n):=p\frac{n^dv^*}{d!}-\log n-l\log\log n.$$

 \begin{Prop}
 
 Fix $\gamma\in\Gamma$ and $c=c(n)$ as above.
     
    \begin{enumerate}
    
         \item If $c\to-\infty$ then $\mathbb{E}_n[A(l,v^*,\gamma)]\to+\infty$
         \item If $c\to+\infty$ then $\mathbb{E}_n[A(l,v^*,\gamma)]\to 0$.

    \end{enumerate}
In particular, the second condition implies that a.a.s. $A(l,v^*,\gamma)=0$.

 \end{Prop}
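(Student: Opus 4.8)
The plan is to follow exactly the same first-moment computation used in the two preceding propositions, now carried out one level deeper in the asymptotic expansion of $p$. By assumption we are in the regime $\omega\to l$, which means the dominant and secondary terms in the exponent have been cancelled, and the relevant remainder is precisely the quantity $c(n)=p\,\frac{n^d v^*}{d!}-\log n-l\log\log n$. The key algebraic identity to extract is
\begin{equation*}
p\,v^*\,\frac{n^d}{d!}=\log n+l\log\log n+c(n),
\end{equation*}
which is just the definition of $c(n)$ rearranged. First I would substitute this into the standard expected-value estimate for the number of copies of a minimal $v^*$-marked $l$-Berge-tree.

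Next I would reproduce the chain of asymptotic equivalences from the previous proposition, the only change being that the factor $\exp(-pv^*n^d/d!)$ now becomes $\exp(-\log n-l\log\log n-c(n))=n^{-1}(\log n)^{-l}e^{-c(n)}$. Tracking the powers of $n$ and of $\log n$ as before, the factors $n^v$, $p^l$, and $n^{-ld}$ combine exactly as in the preceding calculation; since $\omega\to l$ the surviving power of $\log n$ is $(\log n)^{l-l}=(\log n)^0=1$, so all the polynomial-in-$\log n$ contributions cancel and one is left with
\begin{equation*}
\mathbb{E}_n[A(l,v^*,\gamma)]\sim\frac{c}{v!}\left(\frac{d!}{v^*}\right)^{l}e^{-c(n)},
\end{equation*}
where here $c=c(l,v^*,\gamma)$ on the right is the combinatorial multiplicity, not to be confused with the real-valued function $c(n)$. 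From this single asymptotic formula both claims are immediate: if $c(n)\to-\infty$ then $e^{-c(n)}\to+\infty$ and the expectation diverges, proving (a); if $c(n)\to+\infty$ then $e^{-c(n)}\to 0$ and the expectation vanishes, proving (b). The final ``in particular'' sentence then follows from the first moment method, since $\mathbb{P}_n[A(l,v^*,\gamma)>0]\le\mathbb{E}_n[A(l,v^*,\gamma)]\to 0$.

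The only genuine care required is notational rather than mathematical: the symbol $c$ is badly overloaded, denoting simultaneously the combinatorial count $c(l,v^*,\gamma)$ appearing as a constant prefactor and the analytic function $c(n)$ that now sits in the exponent. I would make the bookkeeping explicit at the outset, as the previous proofs do, by writing $c:=c(l,v^*,\gamma)$ and keeping $c(n)$ always with its argument. With that distinction fixed, the proof is a verbatim adaptation of the preceding two, and there is no substantive obstacle — the whole content is the observation that in the $\omega\to l$ window the polynomial corrections cancel and the behavior is governed entirely by the sign of the divergence of $c(n)$ through the factor $e^{-c(n)}$.
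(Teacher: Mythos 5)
Your proposal is correct and follows essentially the same route as the paper: substitute the identity $p\,v^*\frac{n^d}{d!}=\log n+l\log\log n+c(n)$ into the first-moment estimate, observe that the $n$ and $\log n$ factors cancel to leave $\mathbb{E}_n[A(l,v^*,\gamma)]\sim\frac{c(l,v^*,\gamma)}{v!}\left(\frac{d!}{v^*}\right)^l e^{-c(n)}$, and read off both limits, with the last claim from the first moment method. Your remark on the overloaded symbol $c$ is well taken; the paper avoids the clash in this proof by writing $c(l,v^*,\gamma)$ in full.
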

 
 \begin{proof}
 
 Note that $p\frac{n^dv^*}{d!}=\log n+l\log\log n+c(n)$, so 
 
 \begin{align*}
&\mathbb{E}_n[A(l,v^*,\gamma)]\sim \frac{c(l,v^*,\gamma)}{v!}n^v p^l(1-p)^{v^*\frac{n^d}{d!}} 
\sim \frac{c(l,v^*,\gamma)}{v!}n^v p^l\exp\left(-p v^*\frac{n^d}{d!}\right) \\
&\sim\frac{c(l,v^*,\gamma)}{v!}n^v p^l\exp(-\log n-l\log\log n-c(n)) \\
&\sim\frac{c(l,v^*,\gamma)}{v!}\left(\frac{d!}{v^*}\right)^l\exp(-c(n)),
\end{align*}
and $1$ and $2$ follow.  
     
     The last part follows from the first moment method.
  \end{proof}


Fix, in $G^{d+1}(n,p)$, a vertex set $S$ of size $|S|=1+ld$ and $\gamma\in\Gamma$. To each of the $c:=c(l,v^*,\gamma)$ potential copies of $v^*$-marked $l$-Berge-trees of type $\gamma$ in $S$ there corresponds
the random variable $X_\alpha$, the indicator of the event $B_\alpha$ that this potential copy is indeed there in $G^{d+1}(n,p)$. Then we clearly have

               $$A(l,v^*,\gamma)=\sum_\alpha X_\alpha.$$

We write $|X_\alpha|:=S$.

\begin{Prop} \label{intersectiontype}

Let $p\sim C\cdot\frac{\log n}{n^d}$, where $\frac{d!}{v^*+1}< C<\frac{d!}{v^*}$. Then, for any $k,l\in\mathbb{N}$, we have $$\mathbb{P}_n[A(l,v^*,\gamma)\geq k]\to 1.$$

\end{Prop}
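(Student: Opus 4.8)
The plan is to run the second moment method on $A = A(l,v^*,\gamma) = \sum_\alpha X_\alpha$, where $\alpha$ ranges over the potential copies and $X_\alpha$ is the indicator of the event $B_\alpha$ that the $\alpha$-th copy is present. The preceding proposition already gives, under the hypothesis $C < \frac{d!}{v^*}$, that $\mu_n := \mathbb{E}_n[A] \sim \frac{c}{v!}(C\log n)^l\, n^{1 - Cv^*/d! + o(1)} \to +\infty$, where $c = c(l,v^*,\gamma)$ and $v = 1+ld$. It therefore suffices to establish $\mathrm{Var}(A) = o(\mu_n^2)$: once this holds, Chebyshev's inequality gives, for fixed $k$ and all large $n$, $\mathbb{P}_n[A < k] \le \mathbb{P}_n[|A - \mu_n| \ge \mu_n/2] \le 4\,\mathrm{Var}(A)/\mu_n^2 \to 0$, which is exactly the claim.

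To bound the variance I would expand $\mathrm{Var}(A) = \sum_\alpha \mathrm{Var}(X_\alpha) + \sum_{\alpha \ne \beta}\mathrm{Cov}(X_\alpha,X_\beta)$ and classify the ordered pairs $(\alpha,\beta)$ by their intersection type, recorded by the triple $(j,i,m^*)$, where, writing $S_\alpha := |X_\alpha|$, $j = |S_\alpha \cap S_\beta|$ is the number of shared vertices, $i$ the number of shared edges, and $m^*$ the number of vertices marked in \emph{both} copies. The diagonal is harmless, since $\sum_\alpha \mathrm{Var}(X_\alpha) \le \sum_\alpha \mathbb{E}_n[X_\alpha] = \mu_n = o(\mu_n^2)$. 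The feature specific to marked Berge-trees is that each $B_\alpha$ is an intersection of edge-present events with edge-absent events (a marking forbids all further edges at the marked vertex), so $\mathbb{P}_n[B_\alpha \cap B_\beta] = p^{|E_\alpha \cup E_\beta|}(1-p)^{|F_\alpha \cup F_\beta|}$, with $E$ the required and $F$ the forbidden edges. In particular, as soon as $\beta$ uses an edge incident to a vertex marked in $\alpha$ that is not already an edge of $\alpha$ (or vice versa), the two events are mutually exclusive and the pair contributes nothing; this compatibility constraint eliminates the bulk of the intersection types outright.

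For the disjoint pairs ($j = 0$) the only source of correlation is the overlap $F_\alpha \cap F_\beta$ of forbidden edges incident to marked vertices of both copies; such an edge must use at least one vertex from each of the two disjoint sets, so $|F_\alpha \cap F_\beta| = O(n^{d-1})$, whence $\mathrm{Cov}(X_\alpha,X_\beta) = \mathbb{E}_n[X_\alpha]\mathbb{E}_n[X_\beta]\big((1-p)^{-|F_\alpha\cap F_\beta|}-1\big) = \mathbb{E}_n[X_\alpha]\mathbb{E}_n[X_\beta]\,O(pn^{d-1})$. Summed over all disjoint pairs this is $O(pn^{d-1})\,\mu_n^2 = O((\log n)/n)\,\mu_n^2 = o(\mu_n^2)$.

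The real work — and the main obstacle — is the intersecting compatible pairs. For each surviving type $(j,i,m^*)$ I would weigh the number of pairs, of order $n^{2v-j}$, against the probability $p^{2l-i}(1-p)^{(2v^*-m^*)n^d/d!\,+\,O(n^{d-1})}$; using $2v - 2dl = 2$ and $p \sim C(\log n)/n^d$, the exponent of $n$ in the ratio of this type's total contribution to $\mu_n^2$ works out to $-j + di + Cm^*/d!$ up to logarithmic factors. The task thus reduces to verifying $di + \frac{C m^*}{d!} < j$ for every compatible nontrivial overlap. Here acyclicity forces any $i$ shared edges, when connected, to span at least $1 + id$ shared vertices, while the compatibility rule (all edges at a doubly-marked vertex must be shared) together with the minimality of $\gamma$ (every leaf carries a marked vertex) is what prevents $m^*$ from being large relative to $j - di$; the extremal case is a connected shared subtree carrying the shared marked vertices, for which the inequality collapses to $\frac{Cm^*}{d!} < 1$ and is secured by $C < \frac{d!}{v^*} \le \frac{d!}{m^*}$. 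Carrying out this case analysis in full, keeping track of the lower-order terms in $|F_\alpha \cup F_\beta|$ and using the window $\frac{d!}{v^*+1} < C < \frac{d!}{v^*}$ to fix $v^*$ as the operative number of markings, is where essentially all the effort lies.
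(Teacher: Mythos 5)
Your argument is correct and rests on the same skeleton as the paper's proof: the second moment method, with the sum over pairs split into disjoint and intersecting pairs and the latter classified into finitely many overlap patterns. Where you genuinely diverge is in how the intersecting pairs are killed. The paper bounds each pattern's contribution \emph{absolutely} (as $o(1)$, not merely $o(\mu_n^2)$) by recognizing the union $S=|X_\alpha|\cup|X_\beta|$ as a configuration already covered by its first-moment estimates: if $S$ spans a cycle it contains a marked, hence small-degree, vertex near a cycle and Proposition \ref{unicyclic} applies; if $S$ is acyclic it is a marked Berge-tree with $\tilde v^*\ge v^*$ markings, the case $\tilde v^*>v^*$ being $o(1)$ by the first-moment computation (this is precisely where the lower bound $\frac{d!}{v^*+1}<C$ is used), and the case $\tilde v^*=v^*$ being impossible for $\alpha\ne\beta$ by minimality of $\gamma$. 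You instead normalize by $\mu_n^2$ and verify $di+Cm^*/d!<j$ pattern by pattern. This works, and in fact more easily than your ``extremal case'' discussion suggests: the shared edges form a sub-hypergraph of the acyclic copy $\alpha$, hence a forest, so $j\ge 1+di$ for every overlap with $i\ge 1$ (and $j\ge 1$ when $i=0$), whether or not the union is cyclic; the inequality therefore always reduces to $Cm^*/d!\le Cv^*/d!<1$, so neither the minimality of $\gamma$ nor the lower bound on $C$ is actually needed in your normalization, only $C<d!/v^*$. The trade-off is that you must carry the $(1-p)$-bookkeeping by hand --- the exponent $(2v^*-m^*)n^d/d!+O(n^{d-1})$, the compatibility/mutual-exclusion observation at doubly-constrained vertices, and the $O(pn^{d-1})\,\mu_n^2$ covariance of disjoint pairs --- all of which the paper absorbs into its earlier propositions.
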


\begin{proof}

By the first moment analysis, the condition on the hypothesis implies $\mathbb{E}_n[A(l,v^*,\gamma)]\to +\infty$ and $\mathbb{E}_n[A(\tilde{l},\tilde{v}^*,\gamma)]\to 0$ for all $\tilde{v}^*>v^*$ and any $\tilde{l}\in\mathbb{N}$. We use the second moment method. As

$$\mathbb{E}_n\left[\sum_{|X_\alpha|\cap |X_\beta|=\emptyset} X_\alpha X_\beta\right]\sim c^2\frac{n^{2v}}{v!^2}p^{2l}(1-p)^{2v^*\frac{n^d}{d!}}\sim\mathbb{E}_n[A(l,v^*,\gamma)]^2$$

it suffices to show that  

$$\mathbb{E}_n\left[\sum_{|X_\alpha|\cap |X_\beta|\ne\emptyset} X_\alpha X_\beta\right]=o(1).$$  

The sets $|X_\alpha|$ and $|X_\beta|$ can only  intersect according to a finite number of patterns,
so it suffices to show that the contribution of all terms with a given pattern is $o(1)$.
Set $S:=|X_\alpha|\cup |X_\beta|$.

Consider an intersection type $S$ such that the model spanned by $S$ contains a cycle. Then the 
configuration $S$ has a vertex of small degree (marked) near a cycle. The sum of contributions of all terms with that intersection type is $\sim$ the expected number of such configurations. As there
is a vertex of small degree near a cycle, this is $o(1)$ by proposition $35$.

If the type of $S$ has no cycles, then $S$ is a maked Berge-tree with $\tilde{v}^*\geq v^*$ marked vertices.

   If $\tilde{v}^*>v^*$ then, by the first moment analysis, the sum of contributions of those terms is 
   $o(1)$.

    We claim that there are no terms with $\tilde{v}^*=v^*$. Indeed, if that were the case,  by minimality, all edges would be in the intersection and so the events indicated by $X_\alpha$ and
    $X_\beta$ would be the same, a contradiction.
\end{proof}

Now consider $p\sim\frac{d!}{v^*}\cdot\frac{\log n}{n^d}$ and, as above, let 

$$\omega(n)=\frac{v^*n^d\frac{p}{d!}-\log n}{\log\log n}.$$

\begin{Prop}

Fix $\epsilon>0$ and $l\in\mathbb{N}$.

    \begin{enumerate}

      \item If eventually $\omega<l-\epsilon$ then for any $k\in\mathbb{N}$, we have $$\mathbb{P}_n[A(l,v^*,\gamma)\geq k]\to 1$$
      \item If $\omega\to+\infty$ then for any $k,\tilde{l}\in\mathbb{N}$, we have $$\mathbb{P}_n[A(\tilde{l},v^*-1,\tilde{\gamma})\geq k]\to 1$$ for all isomorphism types 
      $\tilde{\gamma}$ of minimal $(v^*-1)$-marked \\ $\tilde{l}$-Berge-trees.
      
      \end{enumerate}

\end{Prop}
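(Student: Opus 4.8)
The plan is to treat both parts by the second moment method, layered on top of the first moment computations already carried out. Throughout I write $A=\sum_\alpha X_\alpha$, with $X_\alpha$ the indicator that a fixed potential copy is present and $|X_\alpha|$ its vertex set, and I recall the general first moment asymptotics in the window $p\sim\frac{d!}{v^*}\cdot\frac{\log n}{n^d}$, namely $\mathbb{E}_n[A(l,w,\gamma)]\sim\frac{c}{v!}(\frac{d!}{v^*})^l\,n^{1-w/v^*}(\log n)^{l-\frac{w}{v^*}\omega}$ for a copy carrying $w$ marks (the boxed formula of the preceding proposition is the case $w=v^*$). Part (i) is, essentially verbatim, the argument of Proposition \ref{intersectiontype} transported to this window; for part (ii) the same scheme applies once the relevant first moment is shown to diverge, the only genuinely new feature being one extra intersection pattern.

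For (i), the hypothesis that eventually $\omega<l-\epsilon$ gives $\mathbb{E}_n[A]\to+\infty$ by the first moment estimate with $w=v^*$, since the exponent $l-\omega$ of $\log n$ stays $\geq\epsilon$. By Chebyshev it then suffices to show $\mathbb{E}_n[\sum_{|X_\alpha|\cap|X_\beta|\ne\emptyset}X_\alpha X_\beta]=o(1)$, as the disjoint pairs contribute $\sim\mathbb{E}_n[A]^2$. I would split the intersecting pairs by the isomorphism type of $S=|X_\alpha|\cup|X_\beta|$ exactly as in Proposition \ref{intersectiontype}: if the model spanned by $S$ contains a cycle, it carries a marked (hence small degree) vertex near that cycle and Proposition \ref{unicyclic} makes its contribution $o(1)$; if $S$ is acyclic it is a $\tilde v^*$-marked Berge-tree with $\tilde v^*\geq v^*$, where $\tilde v^*>v^*$ contributes $o(1)$ by the first moment (as $C=\frac{d!}{v^*}>\frac{d!}{\tilde v^*}$), while $\tilde v^*=v^*$ is vacuous since minimality forces all edges of the two copies to coincide, i.e. $\alpha=\beta$. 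Hence $\mathbb{P}_n[A\geq k]\to1$ for every fixed $k$.

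For (ii), I would first record that the pertinent count is now $A(\tilde l,v^*-1,\tilde\gamma)$, whose threshold coefficient $\frac{d!}{v^*-1}$ exceeds $C=\frac{d!}{v^*}$; setting $w=v^*-1$ in the first moment formula gives $\mathbb{E}_n[A(\tilde l,v^*-1,\tilde\gamma)]\sim\frac{c}{v!}(\frac{d!}{v^*})^{\tilde l}\,n^{1/v^*}(\log n)^{\tilde l-\frac{v^*-1}{v^*}\omega}$. Since $p\sim\frac{d!}{v^*}\cdot\frac{\log n}{n^d}$ forces $\omega=o(\log n/\log\log n)$, the factor $(\log n)^{-\frac{v^*-1}{v^*}\omega}$ equals $n^{-o(1)}$, so this expectation is $n^{1/v^*-o(1)}\to+\infty$. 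Running the same second moment split, the cyclic patterns and the acyclic patterns with $\tilde v^*\geq v^*$ are disposed of as before, the borderline terms with $\tilde v^*=v^*$ now being killed by the $\omega\to+\infty$ first moment bound. The new pattern is an acyclic union with exactly $v^*-1$ marks: then the two copies share all their marks, and if $\alpha\ne\beta$ the union is a $(v^*-1)$-marked Berge-tree with $L>\tilde l$ edges, of expected count $\sim\frac{c}{v!}(\frac{d!}{v^*})^{L}\,n^{1/v^*}(\log n)^{L-\frac{v^*-1}{v^*}\omega}$; dividing by $\mathbb{E}_n[A]^2\sim\text{const}\cdot n^{2/v^*}(\log n)^{2\tilde l-2\frac{v^*-1}{v^*}\omega}$ yields a ratio $n^{-1/v^*+o(1)}\to0$, so these terms are $o(\mathbb{E}_n[A]^2)$ and the variance estimate survives.

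The main obstacle is precisely this last pattern. Unlike in Proposition \ref{intersectiontype}, the overlap with $\tilde v^*=v^*-1$ is not vacuous, so one cannot merely invoke minimality; one must instead verify that enlarging the edge set while holding the mark count fixed costs a full polynomial factor $n^{1/v^*}$ against $\mathbb{E}_n[A]^2$. The bookkeeping hinges on the harmless but essential point that $\omega\to+\infty$ only slowly, being $o(\log n/\log\log n)$, so that every surviving power of $\log n$ is absorbed into $n^{o(1)}$ and the polynomial gap in $n$ decides all comparisons. The degenerate case $v^*=1$, where no marked vertex is available to place near a cycle, would be handled separately by the standard density comparison for unmarked Berge-trees.
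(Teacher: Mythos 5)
Your proposal follows the same overall route as the paper's proof: both parts are second--moment arguments layered on the first--moment computations, with intersecting pairs split by the isomorphism type of the union, cyclic patterns killed by Proposition \ref{unicyclic} (a marked, hence small--degree, vertex near a cycle) and acyclic patterns with at least $v^*$ marks killed by the first moment. The paper's own proof is two sentences: part (a) is declared identical to the proof of Proposition \ref{intersectiontype}, and for part (b) it only records that $\omega\to+\infty$ makes the expected number of $v^*$-marked Berge-trees $o(1)$, ``so that the intersection pattern must have all the $v^*-1$ marked vertices,'' implicitly leaving that last pattern to the same minimality argument used in Proposition \ref{intersectiontype}. You treat that pattern differently, and more robustly: rather than arguing it is vacuous, you bound the expected number of acyclic unions with $L>\tilde l$ edges and exactly $v^*-1$ marks and check it is $o(\mathbb{E}_n[A]^2)$ via the polynomial gap $n^{-1/v^*}$, using that $p\sim\frac{d!}{v^*}\frac{\log n}{n^d}$ forces $\omega=o(\log n/\log\log n)$ and hence $(\log n)^{\omega}=n^{o(1)}$. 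This refinement has real content: with the paper's definition of a leaf (an edge incident to exactly one other edge), a Berge-tree whose edges all pass through a common vertex has no leaves and is therefore minimal under any marking, so two distinct such copies can share all their marks without coinciding; your direct first-moment bound on the union covers this case, whereas the bare appeal to minimality does not. Your separate flagging of the degenerate case $v^*=1$ is likewise appropriate. In short: same decomposition and same key estimates as the paper, with one sub-case handled by a quantitative bound where the paper relies on a structural (and slightly fragile) vacuousness claim.
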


\begin{proof}

 The proof of $1$ is the same as the proof of the above proposition.
 
 The proof of $2$ is analogous, noting that condition $2$ implies that the expected number of
 $v^*$-marked Berge-trees with any fixed number of edges is $o(1)$, so that the intersection pattern must have all the $v^*-1$ marked vertices.
\end{proof}

Now consider the case $\omega\to l$ and, as above, let

     $$c(n)=\frac{pn^dv^*}{d!}-\log n-l\log\log n.$$
     
     Finally, the same reasoning used in the proofs of the two above propositions demonstrates the following proposition.

\begin{Prop}

If $c\to-\infty$ then $\mathbb{P}_n[A(l,v^*,\gamma)\geq k]\to 1$ for any $k\in\mathbb{N}$.

\end{Prop}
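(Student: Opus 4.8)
The plan is to establish concentration of $A(l,v^*,\gamma)$ about its mean by the second moment method, exactly along the lines of the proof of Proposition~\ref{intersectiontype}. Writing $A:=A(l,v^*,\gamma)=\sum_\alpha X_\alpha$, the hypothesis $c\to-\infty$ together with the first-moment computation already carried out gives
$$\mathbb{E}_n[A]\sim\frac{c(l,v^*,\gamma)}{v!}\left(\frac{d!}{v^*}\right)^l\exp(-c(n))\to+\infty,$$
so it suffices to prove $\mathrm{Var}_n[A]=o(\mathbb{E}_n[A]^2)$. Chebyshev's inequality then forces $A/\mathbb{E}_n[A]\to 1$ in probability, and since $\mathbb{E}_n[A]\to+\infty$ this yields $\mathbb{P}_n[A\geq k]\to 1$ for every fixed $k$.

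First I would split $\mathbb{E}_n[A^2]=\sum_{\alpha,\beta}\mathbb{E}_n[X_\alpha X_\beta]$ according to whether $|X_\alpha|\cap|X_\beta|=\emptyset$. The disjoint terms factor and contribute $\sim\mathbb{E}_n[A]^2$ by the same computation used in Proposition~\ref{intersectiontype}, so everything reduces to bounding the intersecting terms
$$\mathbb{E}_n\!\left[\sum_{|X_\alpha|\cap|X_\beta|\ne\emptyset}X_\alpha X_\beta\right]$$
by $o(\mathbb{E}_n[A]^2)$, grouping them by the finitely many isomorphism patterns of $S:=|X_\alpha|\cup|X_\beta|$.

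The pattern analysis mirrors Proposition~\ref{intersectiontype}. If the model spanned by $S$ contains a cycle, then $S$ carries a marked (hence small-degree) vertex near that cycle, so the contribution of that pattern is asymptotic to the expected number of such configurations, which is $o(1)$ by Proposition~\ref{unicyclic}. If $S$ is acyclic, it is a marked Berge-tree with some $\tilde v^*\geq v^*$ marked vertices. For $\tilde v^*>v^*$ a first-moment estimate as above gives an expected count $\sim n^{1-\tilde v^*/v^*}(\log n)^{\tilde l}\to 0$, since the suppression $n^{-1/v^*}$ per extra marked vertex beats the logarithmic growth; so these patterns also contribute $o(1)$. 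Finally, for $\tilde v^*=v^*$ the minimality of $\gamma$ forces the two marked vertex sets to coincide and every leaf edge into the intersection, so $X_\alpha$ and $X_\beta$ index the same copy, and there is no proper-intersection contribution beyond the diagonal $\sum_\alpha\mathbb{E}_n[X_\alpha^2]=\mathbb{E}_n[A]$.

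Collecting these estimates, the intersecting terms total $o(1)+\mathbb{E}_n[A]=o(\mathbb{E}_n[A]^2)$, whence $\mathrm{Var}_n[A]=o(\mathbb{E}_n[A]^2)$ and the Chebyshev argument closes the proof. The main obstacle I anticipate is the bookkeeping for the acyclic intersection patterns in the critical window $\omega\to l$: one must verify that every pattern with $\tilde v^*=v^*$ marked vertices is genuinely diagonal (this is precisely where minimality is indispensable) and that the refined $\omega$- and $c$-dependence does not disturb the clean $n^{1-\tilde v^*/v^*}$ scaling that makes the $\tilde v^*>v^*$ contributions vanish. Confirming that disjoint copies remain asymptotically independent despite the degree constraints at marked vertices inducing mild long-range correlations is a secondary technical point, to be dispatched exactly as in Proposition~\ref{intersectiontype}.
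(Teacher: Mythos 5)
Your proposal is correct and follows exactly the route the paper intends: the paper's own ``proof'' is just the remark that the same second-moment reasoning as in Proposition~\ref{intersectiontype} applies, and you have spelled out precisely that argument (first moment $\to\infty$ from $c\to-\infty$, variance controlled by splitting intersecting pairs into cyclic patterns killed by Proposition~\ref{unicyclic}, acyclic patterns with $\tilde v^*>v^*$ killed by the first-moment estimate, and $\tilde v^*=v^*$ excluded by minimality, then Chebyshev). Your closing caveats about the $\log\log n$ and $c(n)$ corrections are the right things to check and they indeed cause no trouble since $c(n)=o(\log\log n)$ in this window.
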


              \subsection{Zero-One Laws Between the Thresholds}

       Now we consider the countable models of the almost sure theories $\Theta_p$
  for $p$ ``between" the critical values above, and get parts $(i)(e),(f),(g)$ of \ref{grandeteorema}.
  
  \begin{Prop}
  
  Let $p$ be an edge function satisfying one of the following properties:
  
   \begin{enumerate}

                         \item $p\sim C\cdot\frac{\log n}{n^d}$, where $\frac{d!}{v^*+1}<C<\frac{d!}{v^*}$ for  some $d,v^*\in\mathbb{N}$.  
                         \item $p\sim \frac{d!}{v^*}\cdot\frac{\log n}{n^d}$ where $\omega\to\pm\infty$ or
            $\omega\to C$, where $l-1<C<l$ for some $l\in\mathbb{N}$
                         \item $p\sim\frac{d!}{v^*}\cdot\frac{\log n+l\log\log n}{n^d}$, where $c\to\pm\infty$

    \end{enumerate}    
    Then $p$ is a zero-one law.                 
  
  \end{Prop}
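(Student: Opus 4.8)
The plan is to show that under each of the three hypotheses all countable models of $\Theta_p$ are elementarily equivalent, so that $\Theta_p$ is complete and $p$ is a zero-one law by the Corollary relating zero-one laws to elementary equivalence. The essential point in every case is to read off, from the first-moment computations of the preceding propositions, exactly which combinatorial configurations are almost surely present and which are almost surely absent, and then to argue that this forces a winning strategy for Duplicator in every Ehrenfeucht-Fra\"iss\'e game via Proposition \ref{winning strategy}.

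First I would fix, for each hypothesis, the critical threshold data. In case $(1)$, with $\frac{d!}{v^*+1}<C<\frac{d!}{v^*}$, the propositions show that a.a.s.\ $G^{d+1}(n,p)$ contains $k$ copies of every minimal $v^*$-marked $l$-Berge-tree (for all $k,l$) while containing no $\tilde v^*$-marked Berge-trees with $\tilde v^*>v^*$; equivalently, the ``boundary'' vertices of components sit at marking level exactly $v^*$. In case $(2)$ the roles are played by the coefficient $\omega$ of $\frac{\log\log n}{n^d}$: when $\omega\to\pm\infty$ or $\omega\to C\in(l-1,l)$, the integer $l=\lfloor\omega\rfloor+1$ (or the relevant endpoint regime) determines which marked trees appear infinitely often and which are excluded. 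In case $(3)$, with $\omega\to l$ and $c\to\pm\infty$, the coefficient $c$ of $\frac{1}{n^d}$ decides, again through the first-moment estimates, whether the $v^*$-marked $l$-Berge-trees appear $k$ times for every $k$ or are a.a.s.\ absent. In all three cases I would also invoke the earlier structural facts that a.a.s.\ there are no bicyclic-or-worse components, that cycles of every type appear, and that vertices of small degree occur only in the prescribed marked configurations (Proposition \ref{unicyclic} and the threshold results).

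The heart of the argument is then the Ehrenfeucht-Fra\"iss\'e analysis. Given two countable models $H_1,H_2$ of $\Theta_p$ and a round count $k$, I would describe Duplicator's strategy based on maintaining a partial isomorphism that respects the ``type'' of each chosen vertex up to radius $2^k$ (its isomorphism type as a marked neighborhood). Because the almost sure theory pins down, for each isomorphism type of configuration realizable within bounded distance, whether it occurs and whether it occurs infinitely often, the two models agree on all bounded-radius types with infinite multiplicity. Duplicator answers each Spoiler move by selecting a vertex of the matching local type at appropriate distance from previously chosen vertices; the infinite-multiplicity guarantee ensures such a vertex is always available, and the bounded-radius agreement ensures adjacency relations are preserved. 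This is exactly the pattern already used to establish the zero-one laws in the Big-Bang and Big-Crunch sections, so I would present it as a direct adaptation, citing \cite{spencer,tese} for the detailed game arguments.

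The main obstacle, and the step I would spend the most care on, is verifying that the first-moment data genuinely captures everything relevant to the game, i.e.\ that no configuration straddling the marking boundary can be exploited by Spoiler. Concretely, one must rule out that a vertex which is ``marked'' in one model corresponds under Duplicator's strategy to a vertex whose further neighbors in the other model let Spoiler distinguish them within the remaining rounds. The resolution is the minimality/first-moment dichotomy already proved: any configuration placing a small-degree (marked) vertex near a richer structure has expectation $o(1)$ and is therefore a.a.s.\ absent, so the marked vertices are genuinely ``terminal'' and interchangeable from the game's local viewpoint. Once this is articulated, elementary equivalence of $H_1$ and $H_2$ follows, $\Theta_p$ is complete, and hence each such $p$ is a zero-one law, establishing parts $(i)(e),(f),(g)$ of Theorem \ref{grandeteorema}.
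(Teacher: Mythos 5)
Your proposal is correct and follows essentially the same route as the paper: identify from the first- and second-moment analysis which marked Berge-trees are a.a.s.\ present infinitely often and which are excluded, note that cyclic components and non-marked vertices are forced to have infinite degree so the ambient structure is rigid, and conclude elementary equivalence of all countable models via the Ehrenfeucht-Fra\"iss\'e machinery, with the ``straddling'' configurations killed by the minimality/first-moment dichotomy. The only cosmetic difference is that the paper names the limit theories $\Theta_{v^*}$ and $\Theta^l_{v^*}$ and disposes of cases (2) and (3) by showing their countable models coincide with those of one of these already-complete theories, whereas you argue each regime directly; the content is the same.
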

  
  \begin{proof}
  
        Consider, first, a function $p\sim C\cdot\frac{\log n}{n^d}$, where
                       
                       $$\frac{d!}{v^*+1}<C<\frac{d!}{v^*}.$$
        We consider the models of the almost sure theory $\Theta_{v^*}:=\Theta_p$.             
       In that range, we still have no bicyclic (or more) components in the first-order perspective. As there are no vertices of 
       small degree near cycles, the unicyclic components are determined up to isomorphism.
       Also we still have infinitely many copies of each cycle. So the union of connected components 
       containing cycles are determined up to isomorphism and Duplicator does not have to
       worry about them: every time Spoiler plays there, he has wasted a move.
       
So let us consider the Berge-tree components. By the first and second moment analysis above,
we have no components containing $(v^*+1)$-marked Berge-trees and have infinite components containing copies  of each minimal $v^*$-marked Berge-tree. Each component containing a $v^*$-marked Berge-tree is determined
up to isomorphism: each non-marked vertex must have infinite neighbors.

Let $l\in\mathbb{N}$ such that $1+ld\le v^*<v^*+1\le 1+(l+1)d$. 
Then there are no Berge-trees of order $l+1$ (or more) as sub-hypergraphs and there are infinitely many
components isomorphic to each Berge-tree of order $\le l$.
Therefore, the union of the components isomorphic to finite Berge-trees is determined up to isomorphism.

The theory $\Theta_{v^*}$ is not $\aleph_0$-categorical though, since in that countable models, there may or may
not be components containing $\tilde{v}^*$-marked Berge-trees with $\tilde{v}^*<v^*$. 
(This includes the degenerate case $\tilde{v}^*=0$: there may or may not be infinite Berge-trees 
where all vertices have infinite neighbors)
These 
components are ``simulated" by components containing $v^*$-marked vertices, with
$v^*-\tilde{v}^*$ marked vertices suitably far from the neighborhood of the $\tilde{v}^*$ marked vertices, this neighborhood being a copy of the $\tilde{v}^*$-marked Berge-tree one wants to
simulate.

More precisely, the countable models of $\Theta_p$ satisfy the hypothesis of 
Proposition \ref{winning strategy}, so they are pairwise elementarily equivalent and, hence, $\Theta_{v^*}$ is complete and the corresponding $p$ are 
zero-one laws.

Now consider $p\sim\frac{d!}{v^*}\cdot\frac{\log n}{n^d}$ and, as above, let 

$$\omega(n)=\frac{v^*n^d\frac{p}{d!}-\log n}{\log\log n}.$$ 

If $\omega\to-\infty$ then the first and second moment analysis above imply that the countable models of $\Theta_p$ 
are the same as the countable models of $\Theta_{v^*}$ and, as $\Theta_{v^*}$ is complete, $p$ is a zero-one law.

If $\omega\to+\infty$ then the first and second moment analysis above imply that the countable models of 
$\Theta_p$ are the same as the countable models of $\Theta_{v^*-1}$. But the latter theory is complete and, hence, the corresponding $p$ are zero-one laws.

 If $\omega\to C$, with $l-1<C<l$, then the countable models of $\Theta^l_{v^*}:=\Theta_p$ are the same as the countable models of $\Theta_{v^*}$ but without the components with marked Berge-trees of 
order $\le l-1$. These models are, for the same reasons, still pairwise elementarily equivalent,
so we have that the corresponding $p$ are zero-one laws.

Finally, consider the case $\omega\to l$ and, as above, let

                 $$c(n)=\frac{pn^dv^*}{d!}-\log n-l\log\log n.$$

     If $c\to-\infty$, then the analysis above show that the countable models of $\Theta_p$ are the same
  as the countable models of $\Theta^l_{v^*}$, so these $p$ are zero-one laws.
  
    If $c\to+\infty$, then the analysis above show that the countable models of $\Theta_p$ are the same
  as the countable models of $\Theta^{l-1}_{v^*}$, so these $p$ are also zero-one laws.
  \end{proof}

             \subsection{Axiomatizations}

    At this point, it is clear that the arguments given in the last section actually give axiomatizations
    for the almost sure theories presented there.
    
    More formally, let the theory $\Theta(v^*)$ consist of a scheme of  axioms saying that there are no bicyclic (or more) components, a
    scheme of axioms saying that there are no copies of $\tilde{v}^*$-marked Berge-trees for each
    $\tilde{v}^*>v^*$ and a scheme of axioms saying that there are infinitely many copies of each minimal
    $v^*$-marked Berge-tree.  
    
    Similarly, let the theory $\Theta(v^*,l)$ consist of a scheme of axioms saying that there are no bicyclic (or more)
    components, a scheme of axioms excluding the $\tilde{v}^*$-marked Berge-trees for each 
    $\tilde{v}^*>v^*$, a scheme of axioms saying that there are no $v^*$-marked Berge-trees of 
    order $\le l-1$ and an scheme saying that there are infinitely many copies of each minimal $v^*$-marked Berge-tree not excluded by the last scheme.      
    
    By the discussion found in the last section, we have the following:
    
    \begin{Thrm}
    
     The theory $\Theta(v^*)$ is an axiomatization for $\Theta_{v^*}$ and, similarly, the theory $\Theta(v^*,l)$ is an axiomatization for
    $\Theta^l_{v^*}$.
    
    \end{Thrm}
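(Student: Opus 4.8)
The plan is to show, for each of the two theories, that it has exactly the same models as the almost sure theory it is claimed to axiomatize; since the paper takes theories to be closed under semantic consequence, this amounts to showing they have identical deductive closures. I would treat $\Theta(v^*)$ and $\Theta_{v^*}$ in detail, the argument for $\Theta(v^*,l)$ and $\Theta^l_{v^*}$ being the same. The argument splits into an easy inclusion, proved by moment estimates, and a completeness step, proved by an Ehrenfeucht--Fra\"iss\'e analysis.

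First I would establish $\Theta(v^*)\subseteq\Theta_{v^*}$, which is equivalent to saying every model of $\Theta_{v^*}$ is a model of $\Theta(v^*)$. For this it suffices to check that each axiom scheme of $\Theta(v^*)$ consists of almost surely true sentences, so that each such sentence lies in $\Theta_p=\Theta_{v^*}$. The exclusion of bicyclic (or more) components and of vertices of small degree near cycles follows from the first-moment estimates already recorded (Proposition \ref{unicyclic} and the small-subhypergraph threshold of Theorem \ref{vantsyan}); the exclusion of $\tilde v^*$-marked Berge-trees for $\tilde v^*>v^*$ follows from the first-moment computation giving $\mathbb{E}_n[A(l,\tilde v^*,\gamma)]\to 0$ in this regime; and the assertion that every minimal $v^*$-marked Berge-tree occurs infinitely often follows from the second-moment argument of Proposition \ref{intersectiontype}. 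Each scheme is a countable family of first-order sentences, every one of which holds almost surely, and hence belongs to $\Theta_{v^*}$.

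The substantive step is the reverse inclusion, for which I would prove that $\Theta(v^*)$ is itself complete. Since $\Theta(v^*)\subseteq\Theta_{v^*}$ and $\Theta_{v^*}$ is consistent, $\Theta(v^*)$ is consistent, and because its axioms force infinitely many copies of some Berge-tree it has no finite model. By the general fact recorded earlier (G\"odel completeness together with L\"owenheim--Skolem) that a consistent theory without finite models is complete exactly when all its countable models are elementarily equivalent, it then suffices to show that any two countable $H_1,H_2\models\Theta(v^*)$ are elementarily equivalent, which I would do by exhibiting a Duplicator winning strategy for every $\ehf(H_1,H_2;k)$ and invoking Proposition \ref{winning strategy}. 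The point is that the axioms determine the isomorphism type of every first-order-relevant piece of a countable model: the cyclic part (unicyclic components, no nearby small-degree vertices, each cycle type present infinitely often) and the Berge-tree part (each non-marked vertex of infinite degree, each minimal $v^*$-marked Berge-tree present infinitely often, and no $\tilde v^*$-marked configuration with $\tilde v^*>v^*$). This is precisely the structural description used in the proof of the zero-one law of the previous subsection, so the same Duplicator strategy transfers verbatim. Completeness of $\Theta(v^*)$ then closes the argument: if $\phi\in\Theta_{v^*}$ but $\Theta(v^*)\models\lnot\phi$, then $\lnot\phi\in\Theta_{v^*}$, contradicting consistency; hence $\Theta(v^*)\models\phi$ and every model of $\Theta(v^*)$ is a model of $\Theta_{v^*}$.

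I expect the main obstacle to be the verification that the listed schemes genuinely determine the countable models up to elementary equivalence, and in particular confirming that nothing first-order-relevant about the cyclic components (their degrees and the multiplicities of each cycle type) is left unconstrained by the axioms, so that the Duplicator argument of the preceding subsection applies without a gap. Once this structural match is confirmed, the statement for $\Theta(v^*,l)$ follows by the identical two-step scheme, using the further axioms excluding $v^*$-marked Berge-trees of order $\le l-1$ and the correspondingly restricted infinitude scheme, together with the already established completeness of $\Theta^l_{v^*}$.
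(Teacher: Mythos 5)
Your two-step argument --- each axiom scheme of $\Theta(v^*)$ holds almost surely by the first- and second-moment estimates, and the schemes pin down the countable models up to elementary equivalence via the Ehrenfeucht--Fra\"iss\'e analysis, whence $\Theta(v^*)$ is complete and must coincide with the consistent theory $\Theta_{v^*}$ containing it --- is exactly the paper's intended argument; its entire ``proof'' is the sentence ``By the discussion found in the last section,'' referring to precisely the zero-one law reasoning you reuse. The point you flag about the cyclic components (that their structure and multiplicities must also be controlled for the Duplicator strategy to apply) is a genuine care point, but it is handled in that same preceding discussion, so your proposal follows the paper's route.
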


             \subsection{On the thresholds}

 The only way an $L$-function can avoid all of the clauses discussed above is the possibility that
 $c(n)$ converges to a real number $c$. That is to say, we must consider the possibility that
 
         $$p=\frac{d!}{v^*}\cdot\frac{\log n+l\log\log n+c(n)}{n^d}$$
         where $c(n)\to c$.       
             
 We will see, in the present chapter, that these $p$, although not zero-one laws, are still convergence laws. The situation is analogous to that of the last section:
 on these thresholds, the spaces $K(\Theta_p)$ admit weighted spanning trees.

    \subsubsection{Limiting Probabilities on the Thresholds}

Let $v^*,l\in\mathbb{N}$ and let $T_1,T_2,\ldots,T_u$ denote the collection of all possible (up to isomorphism) $v^*$-marked Berge-trees of order $l$ and, for a $u$-tuples $\mathbf{m}=(m_1,\ldots,m_u)\in(\mathbb{N}_s)^s$, 
 let $\sigma_{\textbf{m}}$ be the elementary property that there are precisely $m_i$ components $T_i$ if $0\leq i\leq s$, and at least $s+1$ components $T_i$ if $m_i=\mathcal{M}$. 
Now we get part $(ii)(b)$ of \ref{grandeteorema}.

\begin{Prop}

Let  $p=\frac{d!}{v^*}\cdot\frac{\log n+l\log\log n+c(n)}{n^d}$, where $c(n)\to c$. Then the collection $\{\sigma_{\mathbf{m}}\mid\mathbf{m}\in(\mathbb{N}_s)^u, u\in\mathbb{N}\}$ is the set of nodes of a 
weighted spanning tree for $\mathcal{K}(\Theta_p)$. In particular, $p$ is a convergence law.

\end{Prop}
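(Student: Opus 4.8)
The plan is to reproduce, in the marked setting, the argument used for the Big-Bang threshold proposition $p\sim c\cdot n^{-\frac{1+ld}{l}}$: there the nodes of the spanning tree recorded the exact number of order-$l$ Berge-tree components, and here they will record the exact number of copies of each minimal $v^*$-marked $l$-Berge-tree type, i.e.\ the values $A(l,v^*,\gamma_i)$ with $\gamma_i\in\Gamma$, truncated at level $s$. As in that proof it is enough to treat the finite nodes $\mathbf{m}\in\mathbb{N}^u$, since a coordinate carrying the symbol $\mathcal{M}$ is by definition the disjoint union of the finer nodes refining it. The genuine novelty is that the refined theories will not be $\aleph_0$-categorical, so categoricity must be replaced by elementary equivalence of all countable models, established through the Ehrenfeucht--Fra\"iss\'e game of Proposition \ref{winning strategy}.

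First I would fix the combinatorial picture of the countable models of $\Theta_p$. From the first- and second-moment computations already carried out, for $p=\frac{d!}{v^*}\cdot\frac{\log n+l\log\log n+c(n)}{n^d}$ with $c(n)\to c$ one has, for each $\gamma\in\Gamma$, $\mathbb{E}_n[A(l,v^*,\gamma)]\to\lambda_\gamma:=\frac{c(l,v^*,\gamma)}{v!}\bigl(\frac{d!}{v^*}\bigr)^l e^{-c}>0$ (with $v=1+ld$), while the expected number of $v^*$-marked Berge-trees of order $<l$ tends to $0$ and of order $>l$ to $+\infty$; moreover there are no $(v^*+1)$-marked Berge-trees and no bicyclic (or heavier) components. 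Thus a countable model of $\Theta_p$ agrees with a model of the complete theory $\Theta^l_{v^*}$ except that the number of order-$l$ marked Berge-trees of each type is now a finite random quantity rather than infinite.

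Next I would show that $\Theta_p\cup\{\sigma_{\mathbf{m}}\}$ is complete for every fixed $\mathbf{m}\in\mathbb{N}^u$. Given two countable models, Duplicator plays the game by matching their two (isomorphic) finite collections of order-$l$ marked Berge-trees through a literal isomorphism, and by using the winning strategy already available for $\Theta^l_{v^*}$ on the remainder --- the cycles, which occur infinitely often with no nearby low-degree vertex; the infinitely many order-$>l$ marked Berge-trees; and the possible infinite components, all mutually simulatable. Hence the two models are elementarily equivalent, so $A_{\sigma_{\mathbf{m}}}$ is a single point of $\mathcal{K}(\Theta_p)$; it is non-empty because $\lim_n\mathbb{P}_n[\sigma_{\mathbf{m}}]>0$, and the $\sigma_{\mathbf{m}}$ are tautologically pairwise incompatible. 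Branches with infinitely many entries are handled the same way, invoking the ``every finite structure occurring infinitely often'' version of the argument to preserve completeness.

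With these facts the verification that the $\sigma_{\mathbf{m}}$ are the nodes of a weighted spanning tree is the same bookkeeping as in the Big-Bang threshold proposition: a level-$s$ node splits into its level-$(s+1)$ refinements, and the weight $\lim_n\mathbb{P}_n[\sigma_{\mathbf{m}}]$ is hereditarily consistent and sums to $1$ on each level by the analogue of Lemma \ref{poisson1}, namely that the $A(l,v^*,\gamma_i)$ are asymptotically independent Poisson with means $\lambda_i$, so that $\lim_n\mathbb{P}_n[\sigma_{\mathbf{m}}]=\prod_{i=1}^u e^{-\lambda_i}\lambda_i^{m_i}/m_i!$; convergence of $\mathbb{P}_n[A_\phi]$ for arbitrary elementary $\phi$ then follows and $p$ is a convergence law. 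I expect the completeness step to be the real obstacle: because the order-$l$ marked Berge-trees are only partially induced, their non-marked vertices attach to the infinite part of the graph, and one must check that Spoiler cannot exploit the boundary between a counted order-$l$ marked Berge-tree and the surrounding simulatable structure. This is exactly where minimality enters --- each copy is anchored at its marked leaves and, as in the proof of Proposition \ref{intersectiontype}, no two copies can share an edge --- and it is why the theory is complete only after conditioning on $\mathbf{m}$, not outright.
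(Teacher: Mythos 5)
Your proposal follows essentially the same route as the paper: reduce to finite $\mathbf{m}$, prove completeness of $\Theta_p\cup\{\sigma_{\mathbf{m}}\}$ by showing the complement of the components carrying the order-$l$ marked Berge-trees realizes the already-complete theory $\Theta^{l+1}_{v^*}$ (you write $\Theta^{l}_{v^*}$, but your description of the remainder matches the correct theory), note the $\sigma_{\mathbf{m}}$ are pairwise incompatible, and finish with the asymptotically independent Poisson limit for the $A(l,v^*,\gamma_i)$ established by factorial moments with the intersection patterns controlled as in Proposition \ref{intersectiontype}. This is exactly the paper's argument, with somewhat more explicit detail on the Ehrenfeucht--Fra\"iss\'e step.
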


\begin{proof}

It is clearly enough to suppose that $\textbf{m}\in(\mathbb{N})^u$.

We claim the countable models of $\Theta_p\cup\{\sigma_{\textbf{m}}\}$ are pairwise elementarily
equivalent. Indeed, the complement of the union of components containing the $v^*$-marked
Berge-trees of order $l$ is elementarily equivalent to the countable models of the theory 
$\Theta_{v^*}^{l+1}$, defined above. As the latter theory is complete, $\Theta_p$ is also complete.

Tautologically no two of the $\sigma_{\textbf{m}}$ can hold simultaneously.

For each $i\in\{1,2,\ldots,u\}$, let $\delta_i$ be the isomorphism type of $T_i$. For notational convenience, set $c_i:=c(l,v^*,\delta_i)$ and $A_i:=A(l,v^*,\delta_i)$. 
The next lemma implies the remaining properties and, therefore, completes the proof.
\end{proof}

\begin{Lemma}  \label{poisson2}

In the conditions of the above proposition, the random variables $A_1,A_2,\ldots,A_u$ are asymptotically independent Poisson with means 

            $$\lambda_i=\frac{c_i}{v!}\left(\frac{d!}{v^*}\right)^le^{-c}.$$ That is to say,

       $$p_{\textbf{m}}:=\lim_{n\to\infty}\mathbb{P}_n[\sigma_{\textbf{m}}]=\prod^u_{i=1}e^{-\lambda_i}\frac{\lambda_i^{m_i}}{m_i!}.$$
In particular  $$\sum_{\textbf{m}\in I}p_{\textbf{m}}=1.$$

\end{Lemma}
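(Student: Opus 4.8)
The plan is to establish the asymptotic independent Poisson behavior of the counts $A_1,\ldots,A_u$ via the method of factorial moments, a standard and robust tool for proving Poisson convergence of dependent indicator sums. The statement is the exact analogue, in the marked setting, of Lemma \ref{poisson1}, so the overall strategy mirrors that proof; only the computation of the means and the verification of asymptotic independence need to be re-examined in light of the marking condition. First I would recall the decomposition $A_i=\sum_\alpha X_\alpha$, where $X_\alpha$ is the indicator that a particular potential copy of a $v^*$-marked $l$-Berge-tree of type $\delta_i$ is present on a vertex set $|X_\alpha|$ of size $v=1+ld$, the marked vertices being required to have no neighbors outside the copy. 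The key quantity is $\lim_n \mathbb{E}_n[A_i]$, which the preceding first-moment computation (the proposition with $c\to\pm\infty$) already evaluates: with $p$ tuned so that $p\,v^* n^d/d! = \log n + l\log\log n + c(n)$ and $c(n)\to c$, one gets
$$\mathbb{E}_n[A_i]\sim\frac{c_i}{v!}\left(\frac{d!}{v^*}\right)^l\exp(-c(n))\to \frac{c_i}{v!}\left(\frac{d!}{v^*}\right)^l e^{-c}=\lambda_i,$$
which is exactly the claimed mean. So the first moment is essentially done for me.

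Next I would compute the joint factorial moments. For fixed nonnegative integers $r_1,\ldots,r_u$, I must show
$$\mathbb{E}_n\!\left[\prod_{i=1}^u (A_i)_{r_i}\right]\longrightarrow \prod_{i=1}^u \lambda_i^{r_i},$$
where $(A_i)_{r_i}=A_i(A_i-1)\cdots(A_i-r_i+1)$ is the falling factorial; by the standard factorial-moment criterion this gives joint convergence in distribution to independent Poisson variables with the stated means, whence the product formula for $p_{\textbf{m}}=\lim_n\mathbb{P}_n[\sigma_{\textbf{m}}]$ follows at once, and $\sum_{\textbf{m}\in I}p_{\textbf{m}}=1$ follows because the limiting law is a genuine product of Poisson distributions. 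The factorial moment $\mathbb{E}_n[\prod_i (A_i)_{r_i}]$ expands as a sum over ordered tuples of $r=\sum_i r_i$ \emph{distinct} potential copies (with $r_i$ of type $\delta_i$). The dominant contribution comes from tuples whose vertex sets are pairwise disjoint: such a term contributes a product of $r$ individual expectations, giving $\prod_i \mathbb{E}_n[A_i]^{r_i}(1+o(1))\to\prod_i\lambda_i^{r_i}$ after accounting for the negligible overcount from the distinctness constraint.

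The main obstacle, and the only place the marking hypothesis does real work, is showing that the overlapping terms are negligible, i.e.
$$\mathbb{E}_n\!\left[\sum_{\substack{\alpha,\beta:\ |X_\alpha|\cap|X_\beta|\neq\emptyset}} X_\alpha X_\beta\right]=o(1)$$
and its higher-order analogues. Here I would invoke exactly the argument already carried out in the proof of Proposition \ref{intersectiontype}: two overlapping marked copies span either a configuration containing a cycle, in which case there is a marked (hence small-degree) vertex near a cycle and the expected count is $o(1)$ by Proposition \ref{unicyclic}; or a larger marked Berge-tree with $\tilde v^*>v^*$ marked vertices, whose expected count is $o(1)$ by the first-moment analysis for $C>d!/v^*$ (transported to the refined $\omega\to l$, $c\to+\infty$ regime); and the case $\tilde v^*=v^*$ is excluded by minimality, since then all edges would have to lie in the intersection, forcing $\alpha=\beta$. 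The point is that \emph{minimality} of the marked Berge-trees is precisely what rules out the otherwise-dangerous equal-marking overlaps, so the overlap error is genuinely $o(1)$ and the disjoint terms dominate. Assembling the factorial-moment limits then completes the proof; as in Lemma \ref{poisson1}, the routine but lengthy bookkeeping of the overlap patterns is the part I would relegate to \cite{tese}.
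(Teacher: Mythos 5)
Your proposal is correct and follows essentially the same route as the paper's proof: the method of factorial moments, the split of the factorial-moment sum into vertex-disjoint tuples (which yield $\prod_i\lambda_i^{r_i}$) and overlapping tuples, and the disposal of the latter by the intersection-pattern analysis of Proposition \ref{intersectiontype}, where minimality excludes the $\tilde v^*=v^*$ overlaps. Your write-up in fact spells out the overlap cases more explicitly than the paper does, but the argument is the same.
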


\begin{proof}

By the method of factorial moments, is suffices to show that, for all $r_1,r_2,\ldots,r_u\in\mathbb{N}$ we have

                    $$\mathbb{E}_n\left[(A_1)_{r_1}\cdots(A_u)_{r_u}\right]\to\lambda^{r_1}\cdots\lambda^{r_u}.$$

As we have seen, each $A_i$ can be written as a sum of indicator random variables $A_i=\sum_{S,j} X^{i,j}_{S}$, each $X^{i,j}_S$ indicates the event $E_S^{i,j}$ that the $j$-th of the potential copies of 
 $v^*$-marked $l$-Berge-trees on the vertex set $S$ is present. Then
$$\mathbb{E}_n\left[(A_1)_{r_1}\cdots(A_u)_{r_u}\right]=
\sum_{S_1,\ldots,S_u,j_1,\ldots,j_u}\mathbb{P}_n[E_{S_1}^{1,j_1}\land\ldots
\land E^{u,j_u}_{S_u}].$$
The above sum splits into $\sum_1+\sum_2$ where $\sum_1$ consists of the terms with $S_1,\ldots,S_u$ pairwise disjoint. It is easy to see that if  

$$p=\frac{d!}{v^*}\frac{\log n+l\log\log n+c(n)}{n^d}$$ 
then
             $\sum_1\sim\prod_i\lambda^{r_i}$.
             
             Arguing as in Proposition \ref{intersectiontype}, one sees that the contribution of each of the terms in
     $\sum_2$ with a given pattern of intersection is $o(1)$. Hence $\sum_2=o(1)$ and we are done.
\end{proof}

Note that in this case again, $\mathcal{K}(\Theta_p)$ is countable with a countably infinite number of limit points. Each one 
 corresponds to specifying finite quantities for the various isomorphism types of marked Berge-trees, except for one, whose copies are insisted 
to appear an infinite number of times.


These pieces together prove the following theorem.

\begin{Thrm}

All elements in $\bc$ are convergence laws.

\end{Thrm}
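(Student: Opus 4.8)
The plan is to mirror the proof of the corresponding Big-Bang theorem and show that the classification theorem for logarithmico-exponential functions forces every $p\in\bc$ into exactly one of the cases already treated in the preceding propositions, each of which is known to be a zero-one law or a convergence law. Since a zero-one law is a convergence law, exhaustiveness of the case analysis is all that is needed.

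First I would observe that for $p\in\bc$ the function $n^d p$ is itself an $L$-function that tends to $+\infty$ yet grows more slowly than every power $n^{\epsilon}$. Applying the trichotomy of the $L$-function classification theorem to $n^d p$ and $\log n$, exactly one of the relations $p\ll(\log n)n^{-d}$, $p\gg(\log n)n^{-d}$, or $p\sim C(\log n)n^{-d}$ for some constant $C\in(0,+\infty)$ must hold. The first alternative is the ``Just Past the Double Jump'' case (part $(i)(d)$ of \ref{grandeteorema}) and the second is Proposition \ref{beyond conectivity} (part $(i)(h)$); both are zero-one laws.

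It remains to treat the critical window $p\sim C(\log n)n^{-d}$, which I would resolve by descending a finite tower of refinements, at each stage invoking the $L$-function trichotomy to ensure the relevant coefficient has a limit. First compare the constant $C$ with the discrete values $\{d!/v^*\mid v^*\in\mathbb{N}\}$: either $d!/(v^*+1)<C<d!/v^*$ for some $v^*$, which is case $(i)(e)$, a zero-one law, or $C=d!/v^*$ exactly. In the latter situation the quantity $\omega(n)=(v^* n^d p/d!-\log n)/\log\log n$ is again an $L$-function, hence has a limit in $\mathbb{R}\cup\{\pm\infty\}$; if $\omega\to\pm\infty$ or $\omega\to C'$ with $l-1<C'<l$ we are in case $(i)(f)$. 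Otherwise $\omega\to l\in\mathbb{N}$, and I pass to $c(n)=p\,n^d v^*/d!-\log n-l\log\log n$: if $c\to\pm\infty$ we land in case $(i)(g)$, and if $c(n)\to c\in\mathbb{R}$ we are precisely in case $(ii)(b)$, which the last proposition together with Lemma \ref{poisson2} shows to be a convergence law with countable $\mathcal{K}(\Theta_p)$.

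The theorem is thus assembled entirely from previously established results, and the only thing genuinely to verify is the exhaustiveness of the partition. The main (and only mildly delicate) point is confirming that at each level of the refinement the auxiliary function in question --- first $n^d p$, then $\omega$, then $c$ --- is itself an $L$-function, so that the trichotomy applies and no oscillating ``gap'' can occur. This is exactly the feature of Hardy's class that rules out infinitely oscillating edge probabilities, and it is what makes the window $p\sim C(\log n)n^{-d}$ split cleanly into the finitely many listed subcases rather than hiding an uncountable family of undecided behaviors.
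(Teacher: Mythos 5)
Your proposal is correct and follows essentially the same route as the paper: the paper's own proof is precisely the observation that every $L$-function in $\bc$ falls into one of the five previously treated cases (the two off-critical ranges, the window between thresholds, and the $\omega$- and $c$-refinements), and you have simply spelled out the exhaustiveness via the $L$-function trichotomy, which the paper leaves implicit.
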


\begin{proof}

Just note that all $L$-functions on the above range must satisfy, with the familiar definitions of 
$\omega(n)$ and $c(n)$, one of the following conditions:

           \begin{enumerate}

                         \item $n^{-d}\ll p\ll(\log n)n^{-d}$
                         \item $(\log n)n^{-d}\ll p\ll n^{-d+\epsilon}$ for all positive $\epsilon$
                         \item $p\sim C\cdot\frac{\log n}{n^d}$, where $\frac{d!}{v^*+1}<C<\frac{d!}{v^*}$ for  some $d,v^*\in\mathbb{N}$
                .         \item $p\sim \frac{d!}{v^*}\cdot\frac{\log n}{n^d}$ where $\omega\to\pm\infty$ or
            $\omega\to C$ where $l-1<C<l$ for some $l\in\mathbb{N}$
                           \item $\omega\to l\in\mathbb{N}$ and $c(n)\to\pm\infty$ or $c(n)\to c\in\mathbb{R}$.
                                                    
            \end{enumerate}
\end{proof}

As it was the case in the last section, it is worth noting that the arguments used in getting zero-one laws for the clauses $1$, $2$ and $3$ do not require the edge functions to be in Hardy's class, so \emph{all} functions inside those intervals are zero-one laws, regardless of being $L$-functions.
 
 On the other hand, taking $\omega(n)$ oscillating infinitely often between two constant values 
 $\omega_1<l$ and $\omega_2>l$ makes the probability of an elementary event oscillate between zero and one. Similarly, taking $c(n)$ oscillating between any two
 different  positive values makes the probability of an elementary event oscillate between two different values $\notin\{0,1\}$. Obviously, these situations rule out convergence laws.

 As it was the case with $\bb$, our present discussion implies that, in a certain sense, most of the functions in $\bc$ are zero-one laws: the only way one of that functions can avoid this condition is being inside
one of the countable windows inside a local threshold for the presence of marked Berge-trees of some order.

                               \section{The Double Jump}

            In this section, we consider the random hypergraph $G^{d+1}(n,p)$, where $p\sim\frac{\lambda}{n^d}$ for some constant $\lambda>0$. Of course, this is equivalent to having 
 $p=\frac{\lambda_n}{n^d}$, where $\lambda_n\to\lambda$.
Our goal is to show that the above $p$'s are convergence laws.            
             
             Simple applications of  Theorem \ref{vantsyan} 
imply that, in this range, the countable models of the almost sure theory have infinitely many 
connected components isomorphic to each finite Berge-tree and no bicyclic (or more) components.
Also, there possibly are infinite Berge-trees as components. As we have already seen, these
components do not matter from a first order perspective, as they are simulated by sufficiently
large finite Berge-trees. More precisely: the addition of components that are infinite Berge-trees do not
alter the elementary type of a hypergraph that has infinitely many copies of each finite Berge-tree.

         The above considerations suggest that it may be useful to consider the asymptotic
 distribution of the various types of unicyclic components in $G^{d+1}(n,p)$. It follows that the 
 distributions are asymptotic independent Poisson. But now, the structure of the set of possible
 completions of the almost sure theory is more complex: it is a Cantor space.


        \subsection{Values and Patterns}


 A \emph{rooted Berge-tree} is simply a Berge-tree $T$ (finite or infinite) with one distinguished vertex $R\in T$, called the \emph{root}. With rooted Berge-trees, the concepts of \emph{parent, child, ancestor} and \emph{descendent}
are clear: their meaning is similar to their natural computer science couterparts for rooted trees. The \emph{depth} of a vertex is its distance from the root. For each $w\in T$, $T^w$ denotes the sub-Berge-tree consisting
of $w$ and all its descendants. 

For $r,s\in\mathbb{N}$, we define the $(r,s)$-value of $T$ by induction on $r$. Roughly speaking, we examine the $r$-neighborhood of $R$ and consider any count greater than $s$, including infinite, indistinguishable 
from each other and call them ``many". Indeed, the possible $(1,s)$-values for a rooted tree $T$ are $0,1,2,\ldots s, \mathcal{M}$, where the symbol $\mathcal{M}$ stands for ``many": the $(1,s)$-value of $T$ is then the number of 
edges incident on the root $R$ if this number is $\le s$; otherwise, the $(1,s)$-value of $T$ is $\mathcal{M}$.

Now suppose the concept of $(r,s)$-value has been defined for all rooted Berge-trees and denote by $\val(r,s)$ the set of all possible such values. In what
follows, $\mathbb{N}_s=\{0,1,\ldots,s,\mathcal{M}\}$. Define, for all $s$ and by induction on $r$,

                                         $$\pat(r,s)=\left\{P:\val(r,s)\to\mathbb{N}_s\mid\sum_{\Gamma\in\val(r,s)}P(\Gamma)=d\right\}$$
and

                                         $$\val(r+1,s)=\left\{\Gamma:\pat(r,s)\to\mathbb{N}_s\right\}.$$                                      
Intuitively, consider an edge $E=\{R,w_1,\ldots,w_d\}$ of $T$ incident on the root $R$.
The pattern of $E$ is the function $P:\val(r,s)\to\mathbb{N}_s$ such that, for all values $\Gamma\in\val(r,s)$, there are exactly $P(\Gamma)$ elements in the set $\{T^{w_1},\dots,T^{w_d}\}$ with $(r,s)$-value
$\Gamma$. 
The $(r+1,s)$-value of $T$ is the function $\Gamma:\text{PAT}(r,s)\to\mathbb{N}_s$ such that, for all
$\Delta\in\pat(r,s)$, the root $R$ has exactly $\Gamma(\Delta)$ edges incident on it with pattern $\Delta$, with $\mathcal{M}$ standing for ``many". 
Note that one always has
$$\sum_{\Gamma\in\val(r,s)}P(\Gamma)=d.$$

For any value $\Gamma\in\val(r,s)$, one can easily create a finite rooted Berge-tree with value $\Gamma$ (for instance, interpret $\mathcal{M}$
 as $s+1$). Also, any rooted Berge-tree can be considered a uniform hypergraph
by removing the special designation of the root.

  \begin{Def}
  
  Fix a vertex $v$ in the random hypergraph $G^{d+1}(n,p)$ and a value $\Gamma\in\val(r,s)$. Then $p_{\Gamma}^n$ is the 
  probability that the ball of center $v$ and radius $r$ is a Berge-tree of value $\Gamma$, considering
  $v$ as the root.
  
  Similarly, for an edge $E=\{v,v_1,\ldots,v_d\}\in G^{d+1}(n,p)$ and a pattern $\Delta\in\pat(r,s)$, $p_{\Delta}^n$ is the probability that the pattern of $E$ is $\Delta$, considering $v$ as the
  root.

  \end{Def}

         Next, we proceed to describe the asymptotic behavior of $p_{\Gamma}^n$ and 
   $p_{\Delta}^n$ as $n\to\infty$.

          \subsection{Poisson Berge-Trees}

       Now we consider a random procedure for constructing a rooted Berge-tree. In fact, it is a simple 
modification of the Galton-Watson Branching Process, aiming to fit the case of hypergraphs.

        $\text{B}(r,\mu)$ is the random rooted Berge-tree constructed as follows:
        
        Let $P(\mu)$ be the Poisson distribution with mean $\mu$ and start with the root $v$. The number of edges incident on $v$ is $P(\mu)$. Each child of $v$ has, in turn, further $P(\mu)$ edges
incident on it (there being no further adjacencies among them, so as to $\text{B}(r,\mu)$ remain
a Berge-tree). Repeat the process until the end of the $r$-th generation and then halt. The resulting structure
is a random Berge-tree rooted on $v$.
           
           One obtains a similar structure $\tilde{\text{B}}(r,\mu)$ beginning with an edge $E$, rooted 
on $v$, and requiring that each non-root vertice has $P(\mu)$ further edges and so on, until the
$r$-th generation.

            For $\Gamma\in\val(r+1,s)$ be a $(r+1,s)$-value, let $p_{\Gamma}$ be the probability that
$B(r+1,s)$ has value $\Gamma$. Similarly, if $\Delta\in\pat(r,s)$ is a $(r,s)$-pattern, let 
$p_{\Delta}$ be the probability that $\tilde{\text{B}}(r,s)$ has pattern $\Delta$. Note that the
method of factorial moments implies that if 
                        $$\pat(r,s)=\{\Delta_1,\Delta_2\ldots,\Delta_N\}$$
then the distributions of edges incident on the root $v\in B(r+1,\mu)$ with pattern $\Delta_i\in\pat(r,s)$  are poisson $P(p_{\Delta_i}\cdot\mu)$, \emph{independently} for each $i\in\{1,\ldots,N\}$.

            The most important piece of information to showing that $p\sim\lambda n^{-d}$ are 
 are convergence laws is that $p_{\Gamma}^n\to p_{\Gamma}$ and $p_{\Delta}^n\to p_{\Delta}$
 for every value $\Gamma$ and every pattern $\Delta$, with $\mu=\frac{\lambda}{d!}$.

           \subsection{Size of Neighborhoods}

        The next lemma shows that the probability that the size of the neighborhood of a given vertice
is large is $o(1)$. There, $|B(v_0,r)|$ is the number of vertices in the ball of center $v_0$ and radius $r$.

\begin{Lemma}   \label{size}

Fix $\epsilon>0$, $\delta>0$ and $r\in\mathbb{N}$. Then

  $$\mathbb{P}_n[\exists v_0 , | B(v_0,r)|>\epsilon n^{\delta}]\to 0.$$

\end{Lemma}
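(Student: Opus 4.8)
The governing idea is that a ball of fixed radius $r$ can be large only if it contains a vertex of large degree, and that in the regime $p\sim\lambda n^{-d}$ every degree is essentially Poisson with constant mean $\lambda/d!$, so large degrees are extremely rare. Accordingly, the first thing I would do is reduce the statement to one about the maximum degree. Write $\deg(u)$ for the number of edges of $G^{d+1}(n,p)$ incident to $u$ and put $D=\max_{u}\deg(u)$. Passing from a set of vertices to all vertices sharing an edge with one of them enlarges the set by a factor of at most $1+dD$, since each vertex lies in at most $D$ edges and each edge contributes at most $d$ further vertices; iterating $r$ times gives $|B(v_0,r)|\le (1+dD)^{r}\le (2d)^{r}D^{r}$ whenever $D\ge 1$ (and $|B(v_0,r)|=1$ if $D=0$). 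Hence, for $n$ large enough that $\epsilon n^{\delta}\ge 1$, the event $\{\exists v_0:\ |B(v_0,r)|>\epsilon n^{\delta}\}$ is contained in the event $\{D> c\,n^{\delta/r}\}$ with $c=\epsilon^{1/r}/(2d)$, so it suffices to show $\mathbb{P}_n[D\ge K_n]\to 0$ for $K_n=\lceil c\,n^{\delta/r}\rceil$.

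Next I would bound the upper tail of a single degree. Since $u$ lies in $\binom{n-1}{d}$ potential edges, each present independently with probability $p$, the event $\deg(u)\ge K$ forces some specific family of $K$ of these potential edges to be simultaneously present; a union bound over the $\binom{\binom{n-1}{d}}{K}$ such families yields
$$\mathbb{P}_n[\deg(u)\ge K]\le \binom{\binom{n-1}{d}}{K}p^{K}\le \frac{1}{K!}\left(\binom{n-1}{d}p\right)^{K}\le \frac{\Lambda^{K}}{K!},$$
where $\Lambda$ is any constant with $\binom{n-1}{d}p\le\Lambda$ for all large $n$; such a $\Lambda$ exists because $\binom{n-1}{d}p\to\lambda/d!$. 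A union bound over the $n$ vertices then gives $\mathbb{P}_n[D\ge K]\le n\,\Lambda^{K}/K!\le n\,(e\Lambda/K)^{K}$. Taking $K=K_n\to\infty$ and passing to logarithms, $\log\!\bigl(n(e\Lambda/K_n)^{K_n}\bigr)=\log n-K_n\log K_n+O(K_n)$, and since $K_n\log K_n$ is of order $n^{\delta/r}\log n$ it dominates both $\log n$ and $O(K_n)$, so this tends to $-\infty$. Therefore $\mathbb{P}_n[D\ge K_n]\to 0$, which by the reduction of the first paragraph completes the proof.

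The computations in the second paragraph are entirely routine; the only genuinely essential step is the geometric bound $|B(v_0,r)|\le (2d)^{r}D^{r}$, which converts a radius-$r$ neighbourhood bound into a degree bound and is precisely what lets a single constant-mean degree tail, amplified by the $n$-fold union bound, annihilate the probability. The one point to watch is the factor $1/r$ in the exponent $n^{\delta/r}$: because $r$ is a fixed constant, the quantity $K_n\log K_n$ still beats $\log n$ no matter how small $\delta/r$ is, so the conclusion holds for every choice of $\epsilon$, $\delta$ and $r$.
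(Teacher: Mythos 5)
Your proof is correct and rests on the same two ingredients as the paper's: a deterministic bound showing that a radius-$r$ ball is controlled by vertex degrees, and a union bound over vertices combined with a Poisson-type tail estimate for a single degree (valid since $\binom{n-1}{d}p\to\lambda/d!$ in this range). The paper organizes this as an induction on $r$, replacing $(\epsilon,\delta)$ by $(\sqrt{\epsilon},\delta/2)$ at each step, whereas you collapse the iteration into the single bound $|B(v_0,r)|\le(2d)^{r}D^{r}$ and push the degree threshold down to $n^{\delta/r}$; this is only a difference of bookkeeping.
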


\begin{proof}

We proceed by induction on $r$.

   If $r=0$, there is nothing to prove.

   For $r=1$, first fix $\tilde{\lambda}>\lambda$. Then one has, for sufficiently large $n$,

   \begin{align*}
 &\mathbb{P}_n\left[\exists v_0 , |B(v_0,1)|>\epsilon n^{\delta}\right]\leq n\cdot\mathbb{P}_n\left[|\{\text{neighbors of } v_0\}|>\epsilon n^{\delta}\right]  \\
 &=n\cdot\mathbb{P}_n\left[|\{\text{edges on }v_0\}|>\frac{\epsilon n^{\delta}}{d}\right]
 =n\cdot\sum_{l>\frac{\epsilon n^{\delta}}{d}}\mathbb{P}_n\left[|\{\text{edges on } v_0\}|=l\right] \\
 &\leq n\cdot\sum_{l>\frac{\epsilon n^{\delta}}{d}}\frac{1}{d!}{n\choose{d}}^{l}p^l(1-p)^{n\choose{d}} 
 \leq n\cdot\sum_{l>\frac{\epsilon n^{\delta}}{d}}n^{dl}\cdot\frac{\lambda_n^l}{n^{dl}}\exp(-n^d\cdot\frac{\lambda_n}{n^d}) \\
 &\leq(\text{constant})\frac{\tilde{\lambda}^{\frac{\epsilon n^{\delta}}{d}}}{(\epsilon n^{\delta}d^{-1})!}\cdot\frac{n}{1-\epsilon^{-1}\tilde{\lambda}dn^{-\delta}} \\
& \sim(\text{constant})\frac{\tilde{\lambda}^{\epsilon n^d}\cdot n}{\sqrt{2\pi\epsilon n^{\delta}d^{-1}}\cdot(\epsilon n^{\delta}d^{-1}e^{-1})^{\frac{\epsilon n^{\delta}}{d} }} 
 =o(1) .
   \end{align*}

   For the induction step, note first that the induction hypothesis implies that, almost surely, every ball of
radius $r$ has size at most $\sqrt{\epsilon}n^{\delta/2}$. Let $B$ be the event $\{\exists v_0 , |B(v_0,r+1)|>\epsilon n^{\delta}\}$.
Note that $B\implies\{\exists v_0 , |B(v_0,r)|>\sqrt{\epsilon}n^{\delta/2}\}$.
Therefore $$\mathbb{P}_n[B]\leq\mathbb{P}_n\left[\exists v_0 , |B(v_0,r)|>\sqrt{\epsilon}n^{\delta/2}\right]=o(1).$$
 \end{proof}

In what follows, $\Omega$ is an $(r,s)$-type and, for a hypergraph $C$, $E(C)$
is the edge set of $C$.

 \begin{Lemma}

 Fix vertices $v_1,v_2,\ldots,v_{kd}$ and an unicyclic connected configuration $C$ on the vertice
 set $\{v_1,v_2,\ldots,v_{kd}\}$ (necessarily with $k$ edges).
 Then, given $C$, the probability that the ball $B(v_1,r)$ of center $v_1$ and radius $r$ on 
 $G^{d+1}(n,p)\setminus E(C)$ 
 is a Berge-tree of type $\Gamma$ is $p_{\Gamma}+o(1)$.

  \end{Lemma}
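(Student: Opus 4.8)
The plan is to analyse the breadth-first exploration of the ball $B(v_1,r)$ in the perturbed hypergraph $\tilde{G}:=G^{d+1}(n,p)\setminus E(C)$ and to show that, in the limit, this exploration is governed by the Poisson Berge-tree $\text{B}(r,\mu)$ with $\mu=\frac{\lambda}{d!}$, so that the explored structure has value $\Gamma$ with probability $p_\Gamma+o(1)$. First I would set up the exploration: starting from the root $v_1$, reveal generation by generation the edges incident to each already-exposed vertex, among the potential edges that neither have been revealed before nor lie in $E(C)$. When a vertex $w$ is processed, the number of new edges incident on it is a binomial variable on roughly $\binom{n-|B|}{d}$ trials with success probability $p$, where $|B|$ is the number of vertices exposed so far. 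Since $p\sim\lambda n^{-d}$ and $\binom{n-|B|}{d}\sim\frac{n^{d}}{d!}$ whenever $|B|=o(n)$, this binomial converges to a Poisson law with mean $\mu=\frac{\lambda}{d!}$, which is exactly the offspring law driving $\text{B}(r,\mu)$.

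Two structural facts must then be secured: that the exploration does not collide with itself (so that $B(v_1,r)$ is genuinely a Berge-tree and distinct branches furnish asymptotically independent Poisson offspring) and that the perturbation by $C$ is negligible. For the first, I would invoke Lemma \ref{size} to fix a small $\delta<\tfrac12$ and assume, on an event of probability $1-o(1)$, that $|B(v_1,r)|\le n^{\delta}$. Conditional on this bound, the expected number of edges joining two already-exposed vertices is $O(n^{2\delta})\cdot\binom{n}{d-1}p=O(n^{2\delta-1})=o(1)$, so that no such edge appears with high probability; hence the generations behave as independent Poisson offspring and $B(v_1,r)$ is acyclic. A first-moment estimate reinforces acyclicity: a unicyclic configuration through $v_1$ on $v\le ld$ vertices with $l$ edges contributes $\sim n^{\,v-1}p^{l}=\lambda^{l}n^{\,v-1-ld}=O(n^{-1})$, and deleting $E(C)$ can only remove potential cycle-closing edges.

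For the perturbation, I would work on the high-probability event $A$ on which $B(v_1,r)$ has size at most $n^{\delta}$, is acyclic, and contains none of the distinguished vertices $v_2,\ldots,v_{kd}$. Here Lemma \ref{size} with a union bound gives $\mathbb{P}_n[v_i\in B(v_1,r)\text{ for some }i\ge 2]=O(n^{\delta-1})=o(1)$. Since every edge of $C$ lies on $\{v_1,\ldots,v_{kd}\}$, on $A$ the only edges of $E(C)$ that could interfere are the at most $k$ edges incident on $v_1$; deleting at most $k$ potential edges from the pool of $\binom{n-1}{d}\to\infty$ shifts the offspring law of the root by $o(1)$ and leaves the limiting Poisson mean unchanged. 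Thus on $A$ the explored rooted Berge-tree has the same limiting law as a generic exploration in the unmodified model, namely that of $\text{B}(r,\mu)$; as the $(r,s)$-value is a measurable function of the depth-$r$ structure, $\mathbb{P}_n[B(v_1,r)\text{ has value }\Gamma]\to p_\Gamma$, which is the assertion.

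The main obstacle I anticipate is the asymptotic independence of the offspring counts across the exposed vertices, that is, the control of self-intersections of the exploration. This is precisely where Lemma \ref{size} is indispensable: without the polynomial bound $|B(v_1,r)|\le n^{\delta}$ one cannot rule out that two branches compete for the same potential edges, which would spoil both the product-of-independent-Poissons structure and the Berge-tree conclusion, and would obstruct identification of the limit with $\text{B}(r,\mu)$.
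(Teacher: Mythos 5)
Your proof is correct and follows essentially the same route as the paper's: both arguments identify the offspring distribution at each exposed vertex as asymptotically Poisson with mean $\lambda/d!$, invoke Lemma \ref{size} to keep the explored neighborhood of size $O(n^{\delta})$, and dispose of self-intersections of the exploration (and of the $k$ deleted edges of $C$) by first-moment estimates on cyclic configurations through $v_1$. The paper merely packages this differently --- as an induction on $r$ carried out in $G^{d+1}(n-\epsilon n^{\delta},p)$ with the method of factorial moments supplying the asymptotic independence of the patterns of the root's edges --- whereas you run a single breadth-first exploration with a binomial-to-Poisson limit, but the underlying estimates are identical.
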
            
             
   \begin{proof}
   
   All probabilities mentioned on this proof are conditional on getting $C$.
   
   By induction on $r$, we show that, for all $0\leq\delta<1$, $0<\epsilon$ and $r\in\mathbb{N}$,
   and given $C$, the ball $B(v_1,r)\subseteq G^{d+1}(n-\epsilon n^{\delta},p)\setminus E(C)$ is a Berge-tree of 
   type $\Gamma$ with probability $p_\Gamma+o(1)$, and, similarly, that given an edge $E$ on $v_1$, the pattern of $E$ in $G^{d+1}(n-\epsilon n^{\delta},p)\setminus E(C)$
is $\Delta$ with probability $p_\Delta+o(1)$.   

For $r=1$, let $\mathcal{E}$ be the edge set of 
      $$B(v_1,r)\subseteq G^{d+1}(n-\epsilon n^{\delta},p)\setminus E(C).$$
   Then one has
   
   \begin{align*}
  & \mathbb{P}_n[|\mathcal{E}|=l]\sim\frac{1}{l!}{n\choose d}^l\cdot p^l\cdot(1-p)^{n\choose d} 
   \sim\frac{n^{dl}}{l!(d!)^l}\cdot(\frac{\lambda}{n^d})^l\cdot\exp\left(-p\frac{n^d}{d!}\right) \\
   &=\frac{1}{l!}(\frac{\lambda}{d!})^l\cdot\exp\left(-\frac{\lambda}{d!}\right)
   \end{align*}
    so that $\mathcal{E}$ has asymptotic distribution $P(\frac{\lambda}{d!})$, which agrees to $B(1,\frac{\lambda}{d!})$.
    A similar argument applies in the case of patterns of edges.
   
   For the induction step, let $\Delta_1,\ldots,\Delta_N$ be the possible $(r,s)$ patterns of 
   edges. We use the method of factorial moments to show that the distributions of the various
   $(r,s)$-patterns are asymptotically independent Poisson with means 
   $\frac{\lambda^l}{l!}\cdot p_{\Delta_i}$, for $1\leq i\leq N$, which clearly suffices.
   First we show that the expected number $\mathcal{P}$ of pairs of edges $E_i,E_j$ on $v_1$
   with patterns $\Delta_i$ and $\Delta_j$ respectively is asymptotically to
$\frac{\lambda^{2l}}{(l!)^2}\cdot p_{\Delta_i}\cdot p_{\Delta_j}$
.     The general case is similar, with more cumbersome notation.
     
     Let $p_0^n$ be the probability of the event $A$ that the pattern of $E_i$ is $\Delta_i$ and 
    that of $E_j$ of $\Delta_j$. Define $\tilde{p}_0^n$ to be the probability of the event $B$ that
    the pattern of $E_i$ is $\Delta_i$ and the pattern of $E_j$, \emph{not counting the vertices
    already used in $E_i$}. 
    By Lemma \ref{size} and the induction hypothesis, one has $\tilde{p}_0^n\to p_{\Delta_i}\cdot p_{\Delta_j}$. 
          Note that $p_0^n\sim \tilde{p_0}^n$, because any hypergraph on the symetric difference $A\triangle B$ 
    has at least two cycles, so that $\mathbb{P}_n[A\triangle B]=o(1)$. So we have

   $$ \mathbb{E}_n[\mathcal{P}]\sim{n\choose d}^2\cdot p^2\cdot p_0^n 
    \sim(\frac{n^d}{d!})^2\cdot p^2\cdot\tilde{p}_0^n 
    \sim\frac{\lambda^2}{d!}\cdot p_{\Delta_i}\cdot p_{\Delta_j}$$
    and we are done.          
 \end{proof}

Similar arguments show that, defining the $(r,s)$-pattern of a cyclic configuration 
 $C$ in the obvious manner, the distribution of cycles of the various types are asymptotically independent
 Poisson. Therefore, if $\Lambda_1,\ldots,\Lambda_k$ are the possible $(r,s)$-patterns
 of cycles,  
  $\textbf{m}$ is the $k$-tuple $(m_1,\ldots,m_k)\in(\mathbb{N}_s)^{k}$ and $\mathcal{P}_i$ is the number of cycles of pattern $\Lambda_i$,
    then, with the usual interpretation of $\mathcal{M}$ as ``at least $s+1$", the property $\sigma_{\textbf{m}}$ given by
        $(\mathcal{P}_1=m_1)\land\ldots\land(\mathcal{P}_k=m_k)$
    is elementary and the open sets in the family $\{A_{\sigma_{\textbf{m}}}\mid\textbf{m}\in(\mathbb{N}_s)^k, k\in\mathbb{N}\}$ can be organized as the
  nodes of a spanning tree. We have seen above that the probability measure of each such node converges. So the edge
  functions on this range are convergence laws. As none of the branches in this tree is isolated, $\mathcal{K}(\Theta_p)$ has no
  isolated points, therefore being a Cantor space. Thus we get part $(iii)$ of \ref{grandeteorema}.

   \begin{Prop}
   
   If $p\sim\frac{\lambda}{n^d}$, then $p$ is a convergence law. Moreover, the correponding space of completions $\mathcal{K}(\Theta_p)$ is
   a Cantor space.
   
    \end{Prop}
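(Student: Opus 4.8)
The plan is to read off the conclusion from the spanning-tree machinery of Section \ref{completions} once the asymptotic Poisson description of the cyclic part is in hand. Fix $r,s\in\mathbb{N}$ and let $\Lambda_1,\dots,\Lambda_k$ enumerate the finitely many $(r,s)$-patterns of cyclic configurations; writing $\mathcal{P}_i$ for the number of unicyclic components of pattern $\Lambda_i$, the sentence $\sigma_{\textbf{m}}$ asserting $(\mathcal{P}_1=m_1)\wedge\cdots\wedge(\mathcal{P}_k=m_k)$, with $\mathcal{M}$ read as ``at least $s+1$'', is elementary, since for fixed $(r,s)$ there are only finitely many patterns and each count up to the threshold is first-order expressible. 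First I would check that, as $(r,s)$ increases, these sentences refine one another and that the associated clopen sets $\{A_{\sigma_{\textbf{m}}}\}$ satisfy the three structural requirements of a spanning tree for $\mathcal{K}(\Theta_p)$.

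For the weights, the lemmas proved above give that, after removing the edges of a fixed unicyclic configuration, the ambient balls realize type $\Gamma$ with probability $p_\Gamma+o(1)$; feeding this into the method of factorial moments shows the counts $\mathcal{P}_1,\dots,\mathcal{P}_k$ are asymptotically independent Poisson, with means governed by $\mu=\lambda/d!$. Hence each $\mathbb{P}_n[A_{\sigma_{\textbf{m}}}]$ converges to a product of Poisson weights, and the proposition relating converging node probabilities to convergence laws (through Lemma \ref{weightedspanningtree}) delivers both that $p$ is a convergence law and that the limits assemble into a borelian measure on $\mathcal{K}(\Theta_p)$. To see that each branch intersects in a single completion, I would note that letting $r,s\to\infty$ the $(r,s)$-patterns recover the full isomorphism type of every unicyclic component, so that the cycle counts pin down the cyclic part up to isomorphism; the acyclic part is already fixed, since $\Theta_p$ forces infinitely many copies of every finite Berge-tree and infinite Berge-trees are simulated by large finite ones, exactly as in the zero-one laws of $\bc$. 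Any two models along the branch are therefore elementarily equivalent by the winning strategies for Duplicator of Proposition \ref{winning strategy}, so the intersection is a single complete theory.

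It remains to identify the space. Since $\mathcal{K}(\Theta_p)$ is always compact, metrizable and totally disconnected, and since a space admitting a spanning tree none of whose branches is isolated is a Cantor space, it suffices to prove that the spanning tree just built has no isolated branch, i.e. that $\mathcal{K}(\Theta_p)$ is perfect. This is the step I expect to be the main obstacle, and it is exactly where the double jump departs from $\bb$ and $\bc$: there the Poisson means are attached to finitely many structure types and the branches eventually stabilize, whereas here there are infinitely many distinct cycle patterns, each with strictly positive limiting Poisson mean. Concretely, at any node $A_{\sigma_{\textbf{m}}}$ one can pass to a larger $(r,s)$ exhibiting a genuinely new pattern $\Lambda$; because the limiting mean of $\mathcal{P}_\Lambda$ is positive, the node splits into at least two children of positive measure --- say ``no component of pattern $\Lambda$'' against ``at least one'' --- so no finite specification can fix the first coordinate of a branch. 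Thus every point of $\mathcal{K}(\Theta_p)$ is a limit point, the space is perfect, and hence a Cantor space.
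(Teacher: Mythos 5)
Your proposal follows essentially the same route as the paper: organize the sentences $\sigma_{\textbf{m}}$ counting unicyclic components by $(r,s)$-pattern into a spanning tree for $\mathcal{K}(\Theta_p)$, use the asymptotic independent Poisson distribution of these counts (via the ball-type lemma and factorial moments) to get convergent node probabilities and hence a convergence law, and conclude the Cantor property from the absence of isolated branches. In fact you supply more detail than the paper does at the two points it treats tersely --- why each branch determines a single completion, and why every node splits into children of positive measure --- and both justifications are sound.
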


  \begin{figure}[!htb]
    \centering
     \includegraphics[height=90mm,angle=90]{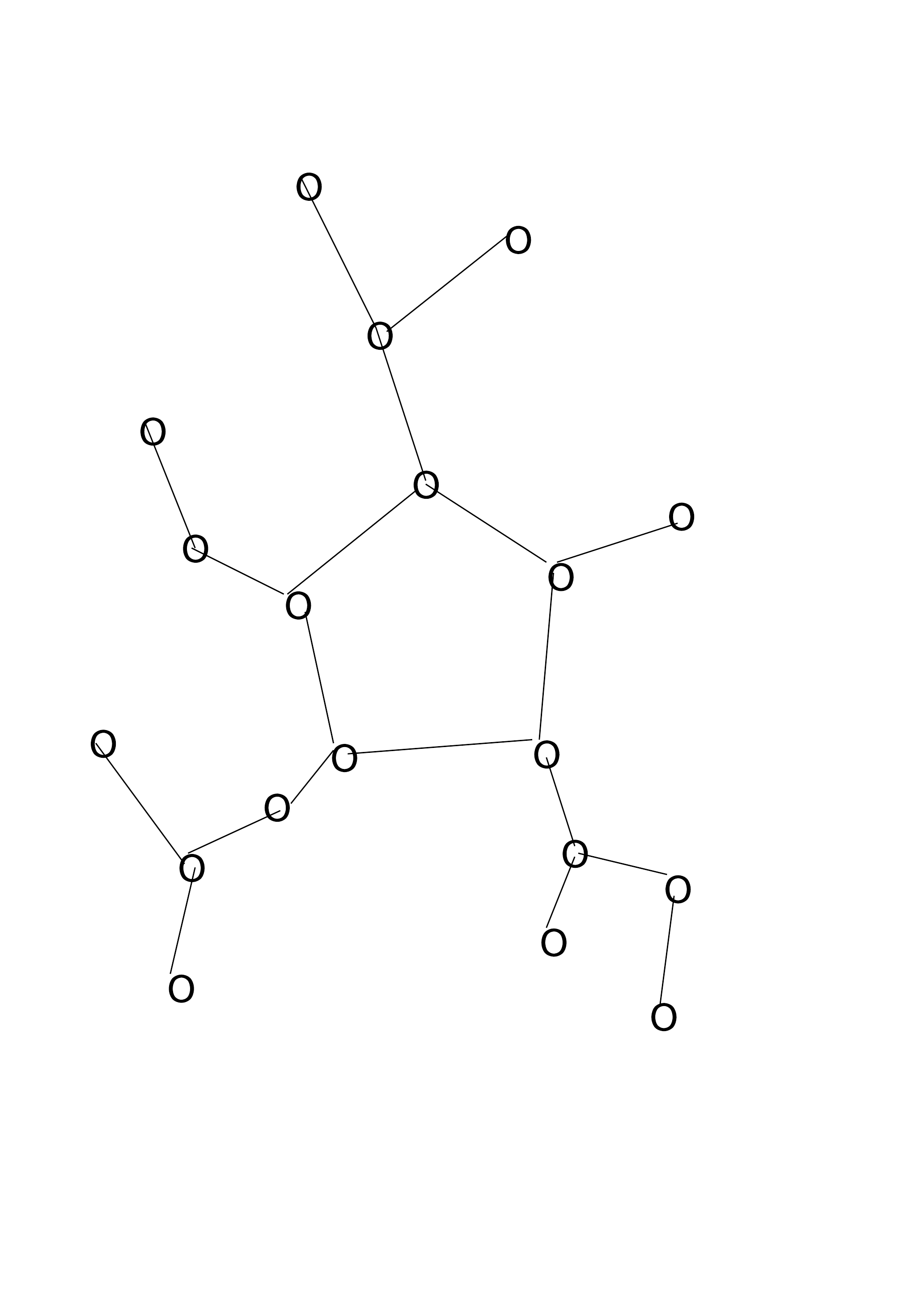}
    \caption{A cyclic type}
    \label{figRotulo4}
  \end{figure}

    The convergence laws in the Double Jump are the last piece of information to obtaining \ref{grandeteorema}.

    
    

\vfill\eject
\providecommand{\bysame}{\leavevmode\hbox to3em{\hrulefill}\thinspace}
\providecommand{\MR}{\relax\ifhmode\unskip\space\fi MR }
\providecommand{\MRhref}[2]{%
  \href{http://www.ams.org/mathscinet-getitem?mr=#1}{#2}
}
\providecommand{\href}[2]{#2}



\noindent
Nicolau C. Saldanha, PUC-Rio, saldanha@puc-rio.br \\
M\'arcio Telles, UERJ and PUC-Rio, marcio.telles@uerj.br   \\
Departamento de Matem\'atica, PUC-Rio \\
R. Marqu\^es de S. Vicente 225, Rio de Janeiro, RJ 22453-900, Brazil  \\
Instituto de Matem\'atica e Estat\'istica, UERJ \\
R. S\~ao Fco. Xavier 524, Rio de Janeiro, RJ 20550-900, Brazil \\

\end{document}